\theoremstyle{plain}
\newtheorem{theorem}{Theorem}[section]
\newtheorem{corollary}[theorem]{Corollary}
\newtheorem{lemma}[theorem]{Lemma}
\theoremstyle{definition}
\newtheorem{definition}[theorem]{Definition}
\newtheorem{remark}[theorem]{Remark}
\numberwithin{equation}{section} 
\newcommand\blfootnote[1]{%
  \begingroup
  \renewcommand\thefootnote{}\footnote{#1}%
  \addtocounter{footnote}{-1}%
  \endgroup
}
\newcounter{comcount}
\begin{document}

\title{Reverse Riesz Inequality on Manifolds with Ends} 
\date{}
\author{DANGYANG HE}  
\address{Department of Mathematics and Statistics, Macquarie University}
\email{dangyang.he@mq.edu.au}

\begin{abstract}
In our investigation, we focus on the reverse Riesz transform within the framework of manifolds with ends. Such manifolds can be described as the connected sum of finite number of Cartesian products $\mathbb{R}^{n_i} \times \mathcal{M}_i$, where $\mathcal{M}_i$ are compact manifolds. We rigorously establish the boundedness of this transform across all $L^p$ spaces for $1<p<\infty$. Notably, existing knowledge indicates that the Riesz transform in such a context demonstrates boundedness solely within a specific range of $L^p$ spaces, typically observed for $1<p<n_*$, where $n_*$ signifies the smallest dimension of the manifold's ends on a large scale.

This observation serves as a significant counterexample to the presumed equivalence between the Riesz and reverse Riesz transforms. Our study illuminates the nuanced behaviour of these transforms within the setting of manifolds with ends, providing valuable insights into their distinct properties. Although the lack of equivalence has been previously noted in relevant literature, our investigation contributes to a deeper understanding of the intricate interplay between the Riesz and reverse Riesz transforms.
\end{abstract}

\maketitle

\tableofcontents

\blfootnote{$\textit{2020 Mathematics Subject Classification.}$ 42B20, 47F05}

\blfootnote{$\textit{Keywords and Phrases.}$ Riesz transform, reverse Riesz inequality, manifolds with ends, Bessel functions.}
\section{Introduction}

The Riesz transform stands arguably as a primary research area in both harmonic analysis and partial differential equations, captivating the minds of esteemed scholars for over 100 years. In the early 20th century, Riesz demonstrated that the Hilbert transform operator is bounded on the real line \cite{R}, employing a holomorphic iteration argument. However, extending this argument to multi-dimensional Euclidean spaces posed considerable challenges, arguably leading to the emergence of Calderón–Zygmund theory \cite{CZ} and developing the pivotal notion of Calderón–Zygmund decomposition. It seems that Riesz's manuscript and related research field had a huge impact on the content of the Hardy-Littlewood-Pólya book \cite{HLP}, which can be placed between Riesz's one dimensional results and Calderón–Zygmund development.  

The research area stemming from the notion of the Riesz transform was significantly expanded by a question posed by Strichartz in \cite{S}. His work included advocating for the examination of the corresponding gradient inequality within the framework of complete Riemannian manifolds. The subject of the Riesz transform in the setting of Riemannian manifolds is so broad that it is impossible to provide a comprehensive bibliography of it here. So we refer readers to \cite{AC,ACDH,B2,B,CCH,CD2,CD,HS,L} and references within for more discussions. This research field is also closely related to the study of Riesz transform in the setting of exterior domains with Dirichlet or Neumann boundary conditions, see for example \cite{HS2,JL,KVZ} (we also discuss this point here because often one can use the same techniques as in complete Riemannian manifolds).

To delve deeper into Strichartz's idea, we briefly describe  standard definitions of gradient, Laplace-Beltrami operator, and Riesz transform in the setting of Riemannian Manifolds. 

Let $M$ be a complete Riemannian manifold endowed with the Riemannian volume measure $\mu$. Denote by $\nabla$ the Riemannian gradient such that $\langle X, \nabla f \rangle =Xf$ for any vector field $X$ and regular function $f$. 
The the corresponding quadratic (energy) form is defined by (see \cite{D} for detailed description about quadratic form)
\begin{equation*}
    Q(f,g)=\int_M \nabla f \cdot \nabla g  d\mu.
\end{equation*}
Then one can define $\Delta$, the Laplace-Beltrami operator, through the following formula (formal integration by parts)
\begin{equation}\label{eq_Laplacian}
    \int_M g \Delta f  = Q(f,g)  \quad \forall f,g\in \mathcal{C}_c^\infty(M).
\end{equation}
The Riesz transform can be then defined by $\nabla \Delta^{-1/2}$. Let $\|\cdot\|_p$ denote the norm in the space $L^p(M,d\mu)$ for $1\le p\le \infty$ and let $|\cdot|$ represent the form length, we say the Riesz transform is bounded on $L^p$ if
\begin{equation*}
    \left\| |\nabla \Delta^{-1/2}f| \right\|_p \le C \|f\|_p \quad \forall f\in \mathcal{C}_c^\infty(M)
\end{equation*}
or equivalently
\begin{equation}\tag{$\textrm{R}_p$}\label{R_p}
    \left\| |\nabla f| \right\|_p \le C \|\Delta^{1/2}f\|_p \quad \forall f\in \mathcal{C}_c^\infty(M).
\end{equation}
Note by \eqref{eq_Laplacian}, $(\textrm{R}_2)$ holds automatically.

The boundedness of $\nabla \Delta^{-1/2}$ also leads to the concept of reverse Riesz transform, which is a natural development of the study of Riesz transform. For example, in a homogeneous Sobolev space, one can ask about the equivalence of the semi-norm: $\left\| |\nabla f| \right\|_p$ and $\|\Delta^{1/2}f\|_p$ or the inequality
\begin{equation}\tag{$\textrm{E}_p$}\label{E_p}
    C_1\|\Delta^{1/2}f\|_{p} \le  \left\||\nabla f| \right\|_{p}\le C_2\|\Delta^{1/2}f \|_p \quad \forall f\in \mathcal{C}_c^\infty(M).
\end{equation}
Particularly, the left-hand inequality which we refer to as the reverse Riesz inequality 
\begin{equation}\tag{$\textrm{RR}_{p}$}\label{eq_RRp}
    \|\Delta^{1/2}f\|_{p} \le C \left\||\nabla f| \right\|_{p} \quad \forall f\in \mathcal{C}_c^\infty(M).
\end{equation}
For literature regarding reverse inequality, we refer readers to \cite{AC,CD,JL,DR}. Essentially, Riesz and reverse Riesz transform are expected to be equivalent. However, this is known to be false in the general setting (see \cite[Proposition 2.1]{CD}). Our investigation provides a significant example to this property and contributes to a deeper understanding of the interplay between the Riesz and reverse Riesz transforms.

Let us first review some previous results regarding \eqref{R_p}. For the case $p<2$, Coulhon and Duong \cite{CD2} proved that if $M$ satisfies the doubling condition
\begin{equation}\tag{$\textrm{D}$}\label{Doubling}
    \mu(B(x,2r)) \le C \mu(B(x,r)) \quad \forall x\in M \quad \forall r>0,
\end{equation}
where $B(x,r)$ is the geodesic ball centered at $x$ with radius $r$ and the heat kernel meets an on-diagonal estimate, then \eqref{R_p} holds for $1<p\le 2$. In the context of $p>2$, one considers a manifold $M$ satisfies \eqref{Doubling} and $(P_2)$ where \eqref{P_q} refers to the Poincaré inequality: for every ball~$B$ and $f\in \mathcal{C}^\infty(B)$
\begin{equation}\tag{$\textrm{P}_q$}\label{P_q}
    \int_{B} |f-f_B|^q d\mu \le C r^q \int_{B} |\nabla f|^q d\mu, 
\end{equation}
where $f_B = \mu(B)^{-1}\int_B fd\mu$ and $1\le q<\infty$. Then, it was verified by Auscher, Coulhon, Duong and Hofmann \cite{ACDH}, that \eqref{R_p} holds as $2<p<r$ for some $r>2$ provided the gradient of heat hernel satisfies some regularity.

Regarding the reverse Riesz inequality, it is well-known that \eqref{R_p} implies $(\textrm{RR}_{p'})$ (where $p'$ is the conjugate exponent of $p$ i.e. $1/p+1/p'=1$) and this property holds on any complete Riemannian manifolds, see \cite{CD}. However, the converse is not true in general. This property was observed in \cite{B} and formally proved in \cite{CD}. To attack the reverse inequality directly, Auscher and Coulhon \cite[Theorem 0.7]{AC} established a Sobolev version Calderón–Zygmund decomposition, also see \cite{A} under the setting of $\mathbb{R}^n$. Their findings suggest that if $M$ satisfies \eqref{Doubling} and \eqref{P_q} for some $1\le q<2$, then \eqref{eq_RRp} holds for $q<p\le 2$. It is also worth mentioning that by \cite{KVZ}, we know that for $d\ge 3$, $\|\Delta_\Omega^{s/2}f\|_p \le C \|\Delta_{\mathbb{R}^d}^{s/2}f\|_p$ for all $f\in \mathcal{C}_c^\infty(\Omega)$ and $0\le s<\min(1+1/p, d/p)$, where $\Omega$ is the complement of a convex compact set $\Omega^c \subset \mathbb{R}^d$ and $\Delta_\Omega$ denotes the Laplacian restricted to Dirichlet boundary condition on $\partial \Omega$. Particularly, their result shows that \eqref{R_p} (the Riesz transform, $\nabla \Delta_\Omega^{-1/2}$) holds for $p\in (1,d)$. Then, it follows by duality and the fact
\begin{equation*}
    \|\Delta_\Omega^{1/2}f\|_p \le C \|\Delta_{\mathbb{R}^d}^{1/2}f\|_p \le C \||\nabla f|\|_p\quad 1< p \le d'
\end{equation*}
that \eqref{eq_RRp} holds on $\Omega$ for all $p\in (1,\infty)$.

\medskip

Before presenting our main result, we introduce the formal notion of the connected sum. Investigations on manifolds with ends also attracts significant interests from experts in harmonic analysis. For more detailed discussions on the definition of such manifolds and related studies, we refer readers to \cite{BS,BDLW,CCH,D2,DLS,GS,HNS,HS,H,L,N} and the references therein.

\begin{definition}\label{1.2d}
Let $\mathcal{V}_i$ where $i = 1,2,\dots,l$ be a family of complete connected non-compact Riemannian manifolds with the same dimension. Then we call the Riemannian manifold $\mathcal{V}$ is the connected sum of $\mathcal{V}_1, \dots, \mathcal{V}_l$ and write it as
\begin{equation*}
    \mathcal{V} = \mathcal{V}_1\#  \mathcal{V}_2\# \dots \# \mathcal{V}_l
\end{equation*}
if for some compact subset $K \subset \mathcal{V}$, its compliment $\mathcal{V} \setminus K$ is a disjoint union of connected open sets $E_i$ where $i = 1,2,\dots,l$ such that each $E_i$ is isometric to $\mathcal{V}_i \setminus K_i$ for some compact set $K_i \subset  \mathcal{V}_i$. We call the subsets $E_i$ the $i$th end of $\mathcal{V}$. 
\end{definition}

In this note, we focus on the manifold appearing in the way
\begin{equation}\label{eq_Manifold}
    \mathcal{M} = (\mathbb{R}^{n_1} \times \mathcal{M}_1)\# \dots \#(\mathbb{R}^{n_l} \times \mathcal{M}_l) \quad l\ge 2 \quad n_i\ge 3,
\end{equation}
where $\mathcal{M}_i$ are some compact manifolds such that $n_i+\textit{dim}(\mathcal{M}_i)=N$ for all $i$. We define each end $E_i=(\mathbb{R}^{n_i}\times \mathcal{M}_i) \setminus K_i$ for some compact set $K_i\subset \mathbb{R}^{n_i}\times \mathcal{M}_i$. In what follows we denote by $d(\cdot,\cdot)$ the Riemannian distance function and $\mu$, the measure on $\mathcal{M}$. Note that $n_i$ are not necessarily all the same and the case when $n_i\ne n_j$ for some $i,j$ may cause the manifold non-doubling. Following the standard approach in \cite{CCH, HS}, we study the Riesz and reverse Riesz transforms via resolvent-based formulas
\begin{equation}\label{eq_riesz}
    \nabla \Delta^{-1/2} = \frac{2}{\pi} \int_0^\infty \nabla (\Delta+k^2)^{-1} dk.
\end{equation}
and
\begin{equation}\label{eq_delta-1/2}
    \Delta^{1/2} = \frac{2}{\pi} \int_0^\infty \Delta (\Delta+k^2)^{-1} dk.
\end{equation}

Most of results concerning Riesz and reverse Riesz transforms, see \cite{AC, ACDH, CD}, are based on two critical geometric features: \eqref{Doubling} and \eqref{P_q}. However, on $\mathcal{M}$, both \eqref{Doubling} and $(\textrm{P}_2)$ fail. Our method is based on a parametrix construction of the resolvent at low energy as in \cite{CCH, HS}, which we may discuss in the next section.


We briefly introduce some results regarding such manifolds with ends. It was proved by Carron, Coulhon and Hassell \cite[Theorem 1.1]{CCH} that the Riesz transform on $\mathbb{R}^n \# \mathbb{R}^n$ $(n\ge 3)$ is bounded on $L^p$ if and only if $p\in (1,n)$, where the study of two copies of planes are included in \cite{CD2}. Generally, on $\mathcal{M}$ (\eqref{eq_Manifold}), Hassell and Sikora \cite[Theorem 1.2]{HS} verified that \eqref{R_p} holds if and only if $p\in (1,n_*)$, where $n_* = \min_j n_j$ is the smallest Eulidean dimension among all the ends and a weak type $(1,1)$ result has been given. Sequentially, Hassell, Nix and Sikora \cite{HNS}, showed that on a critical but technically different case, where $ \mathcal{M}=(\mathbb{R}^2\times \mathcal{M}_-)\#(\mathbb{R}^{n_+}\times \mathcal{M}_+)$ with $n_+\ge 3$, \eqref{R_p} holds exactly in the range $p\in (1,2]$. Last but not the least, as a supplement, the author of \cite[Theorem 1.3]{H} completes the picture by showing $\nabla \Delta^{-1/2}$ is bounded on the Lorentz space $L^{n_*,1}$ and is unbounded from $L^{n_*,p}\to L^{n_*,q}$ for any $p\in (1,\infty)$ and $q\in[p,\infty]$. 

The main aim of this note is to prove following results.
\begin{theorem}\label{thm_MainResult}
Let $\mathcal{M}$ be a manifold with ends defined in \eqref{eq_Manifold} with $l\ge 2$ and $n_*:= \min_i n_i\ge 3$. Then, \eqref{eq_RRp} holds for all $p$ i.e.
\begin{equation}\label{eq_MainResult}
    \|\Delta^{1/2}f\|_p \le C \left\||\nabla f|\right\|_p \quad \forall f\in \mathcal{C}_c^\infty(\mathcal{M}).
\end{equation}
for all $p\in (1,\infty)$.
\end{theorem}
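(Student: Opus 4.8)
The plan is to exploit the resolvent representation \eqref{eq_delta-1/2} together with a low-energy/high-energy splitting of the integral in $k$, since the obstruction to standard Calder\'on--Zygmund arguments (failure of \eqref{Doubling} and $(\textrm{P}_2)$) is concentrated at low energies where the different-dimensional ends interact. For the high-energy part $k\ge 1$, the manifold $\mathcal{M}$ is locally just a smooth complete manifold of bounded geometry, and the resolvent $\Delta(\Delta+k^2)^{-1}$ enjoys Gaussian-type kernel bounds with enough regularity; here one runs the Auscher--Coulhon Sobolev-type Calder\'on--Zygmund argument \cite{AC} locally, or more simply observes that $\Delta^{1/2}(\Delta+1)^{-1/2}$ composed with the local Riesz machinery is $L^p$-bounded for all $p\in(1,\infty)$ because only small-scale geometry is seen. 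The real content is the low-energy part, and here I would insert the parametrix construction for $(\Delta+k^2)^{-1}$ at low energy from \cite{CCH,HS}: on each end $E_i$ the resolvent is modelled, modulo acceptable error terms, by the Euclidean resolvent on $\mathbb{R}^{n_i}$ (tensored with the compact factor $\mathcal{M}_i$, which after projecting onto the zero mode contributes only the Bessel-function structure in the radial variable), glued across the compact core $K$.

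The key steps, in order: (i) write $\Delta^{1/2}f = \frac{2}{\pi}\int_0^1 \Delta(\Delta+k^2)^{-1}f\,dk + \frac{2}{\pi}\int_1^\infty \Delta(\Delta+k^2)^{-1}f\,dk$ and dispose of the second integral by local harmonic analysis as above; (ii) for the first integral, substitute the parametrix $(\Delta+k^2)^{-1} = G_k + R_k$ where $G_k$ is the sum of model resolvents on the ends (built from $\mathbb{R}^{n_i}$) and $R_k$ is a smoothing remainder with operator norm bounds that gain powers of $k$ and of the distance to $K$; (iii) for the main term $\int_0^1 \Delta G_k\,dk$, use the algebraic identity $\Delta G_k = I - k^2 G_k$ on each model piece, so that $\int_0^1 \Delta G_k f\,dk$ reduces to controlling $\int_0^1 k^2 G_k f\,dk$ plus boundary/matching terms; crucially one writes $\Delta G_k = \nabla^*\nabla G_k$ and integrates by parts to move one gradient onto a test function, producing the bound by $\||\nabla f|\|_p$ rather than by $\|f\|_p$ — this is precisely why the \emph{reverse} inequality holds in the full range $p\in(1,\infty)$ even though \eqref{R_p} does not; (iv) estimate the kernel of the resulting operator $\int_0^1 \nabla^* G_k \,dk$ (and the analogous remainder contributions) directly using Bessel function asymptotics for the $\mathbb{R}^{n_i}$-resolvent kernels, showing it is an $L^p\to L^p$ bounded operator for all $p\in(1,\infty)$ by Schur-type testing against the (non-doubling) measure, handling the cross-end terms $x\in E_i$, $y\in E_j$ separately since there the kernel decays in both $|x|$ and $|y|$ and no critical exponent appears.

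The main obstacle I expect is step (iv) for the \emph{diagonal} (same-end) contribution: when $x,y$ lie in the same end $E_i$, the model operator is essentially $\nabla^*(\Delta_{\mathbb{R}^{n_i}}+k^2)^{-1}$ integrated over $k\in(0,1)$, whose kernel behaves like $|x-y|^{-(n_i-1)}$ near the diagonal and must be shown to define a bounded operator — this is the point where, for the \emph{forward} Riesz transform, the exponent $n_i$ enters and boundedness fails for $p\ge n_i$, so one must check carefully that in the reverse direction the extra integration by parts genuinely removes the singularity's obstruction, i.e. that $\int_0^1 \nabla^*(\Delta_{\mathbb{R}^{n_i}}+k^2)^{-1}\,dk$ is a Calder\'on--Zygmund operator on $\mathbb{R}^{n_i}$ in the standard sense (it should be, being morally a component of $\nabla\Delta_{\mathbb{R}^{n_i}}^{-1/2}$ adjointed, which is classical). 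A secondary difficulty is bookkeeping the compact factors $\mathcal{M}_i$: the nonzero eigenmodes of $\Delta_{\mathcal{M}_i}$ contribute exponentially decaying (massive) resolvents which are harmless, but one must justify the reduction to the zero mode uniformly in $k$, which is where the hypothesis $n_i\ge 3$ (ensuring transience and integrability of the model resolvents at spatial infinity) is used.
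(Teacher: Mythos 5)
Your outline matches the paper up to the point where all the work lies, and then it breaks.  You correctly set up the duality pairing, split into high/low energy, invoke the parametrix of \cite{CCH,HS}, and recognise that pairing $\Delta(\Delta+k^2)^{-1}f$ with $g$ and moving one gradient gives $\langle \nabla f, \nabla(\Delta+k^2)^{-1}g\rangle$, so the target becomes an $L^{p'}$ bound for $g\mapsto \int_0^1 \nabla(\Delta+k^2)^{-1}g\,dk$.  But your step (iv) then asserts that this operator is bounded on $L^{p'}$ for \emph{every} $p'\in(1,\infty)$.  That is false: it is precisely the low-energy part of the Riesz transform, and by \cite[Theorem~1.2]{HS} the Riesz transform on $\mathcal M$ is bounded on $L^{q}$ \emph{only} for $1<q<n_*$.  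Its $L^{q}$-boundedness for $q\ge n_*$ fails not because of the diagonal singularity $|x-y|^{-(n_i-1)}$ (which is a harmless Calder\'on--Zygmund feature), but because of the $G_3$ term: the factor $\int_{E_i}(\Delta_i+k^2)^{-1}(z_i^0,z')g(z')\,dz'$ scales like $k^{n_i/p'-2}$ near $k=0$, which is non-integrable as soon as $p'\ge n_i$.  Simply dualising the forward Riesz transform gives \eqref{eq_RRp} only for $p>n_*'$, and your argument contains nothing that goes beyond that.

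What is missing is the actual mechanism of the paper, which the title of Section~\ref{Section_proof} names: harmonic annihilation.  In the $G_3$ piece one writes $\nabla\phi_i+\nabla u_i(\cdot,k)=\nabla[\phi_i+u_i(\cdot,0)]+\nabla[u_i(\cdot,k)-u_i(\cdot,0)]$.  The first term is $\nabla\Phi_i$ with $\Phi_i=\phi_i+u_i(\cdot,0)$ \emph{harmonic}, so after integrating by parts against $\nabla f$ (using $f\in\mathcal C_c^\infty$) its contribution is $C(g)\,\langle f,\Delta\Phi_i\rangle=0$.  Only then does the remaining term $\nabla[u_i(\cdot,k)-u_i(\cdot,0)]$ carry the extra factor $O(k)$ near $k=0$ (Lemma~\ref{leKey}), which is exactly what restores integrability of the $k$-integral and lets one push $p$ all the way down to $1$.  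Your proposal never isolates this harmonic leading term, never subtracts it, and never gains the crucial extra power of $k$; as written, the low-energy bilinear form in your step (iv) simply diverges when $p'\ge n_*$, and the proof collapses at that point.

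There is also a minor confusion in your step (iii): you write both $\Delta G_k=I-k^2G_k$ and $\Delta G_k=\nabla^*\nabla G_k$ and suggest ``reducing to $\int_0^1 k^2G_k\,dk$ plus boundary terms.''  The identity $\Delta(\Delta+k^2)^{-1}=I-k^2(\Delta+k^2)^{-1}$ produces an $\int_0^1 I\,dk$ term whose pairing with $g$ is $\langle f,g\rangle$, and there is no way to dominate this by $\| |\nabla f|\|_p\|g\|_{p'}$ without a Sobolev argument you have not set up; the paper avoids this by never using that identity and pairing directly so that exactly one gradient lands on $f$.

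~\\
\end{document}
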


And for the critical case considered in \cite{HNS}, where 
\begin{align}\label{eq_manifold_criticalcase}
 \mathcal{M}=(\mathbb{R}^2\times \mathcal{M}_-)\#(\mathbb{R}^{n_+}\times \mathcal{M}_+)
\end{align}
with $n_+\ge 3$, a similar result holds.

\begin{theorem}\label{thm_RR_critical}
Let $\mathcal{M}$ be a manifold with ends defined by \eqref{eq_manifold_criticalcase}. Then, \eqref{eq_RRp} holds for all $1<p<\infty$.
\end{theorem}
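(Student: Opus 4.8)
The plan is to reduce to the range $p\in(1,2)$ by duality, and then to treat the low energy of the resolvent — where the geometry of the critical end really enters — by the parametrix of \cite{CCH,HS}, everything else being handled by soft local arguments.

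First I would dispose of $p\ge 2$. By \eqref{eq_Laplacian} the case $p=2$ is immediate, and on any complete manifold the $L^q$-boundedness of $\nabla\Delta^{-1/2}$ implies the reverse inequality $\|\Delta^{1/2}f\|_{q'}\le C\,\|\,|\nabla f|\,\|_{q'}$ (dualise the identity $\Delta^{-1/2}\nabla^{*}=(\nabla\Delta^{-1/2})^{*}$; see \cite{CD}). Since the Riesz transform on the manifold \eqref{eq_manifold_criticalcase} is bounded on $L^q$ for every $q\in(1,2]$ by \cite{HNS}, this already yields \eqref{eq_RRp} for all $p\in[2,\infty)$, and it remains only to prove \eqref{eq_RRp} for $p\in(1,2)$.

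For such $p$ I would start from \eqref{eq_delta-1/2}. Writing $\vec\Delta$ for the Hodge--de Rham Laplacian on $1$-forms, one has $\nabla(\Delta+k^{2})^{-1}=(\vec\Delta+k^{2})^{-1}\nabla$ and $\Delta=\nabla^{*}\nabla$, so
\begin{equation*}
    \Delta^{1/2}f=\frac{2}{\pi}\int_0^\infty\nabla^{*}(\vec\Delta+k^{2})^{-1}(\nabla f)\,dk ,\qquad f\in\mathcal C_c^\infty(\mathcal M),
\end{equation*}
and it suffices to bound the right-hand side in $L^p$ by $\|\,|\nabla f|\,\|_p$. I split the integral at $k=1$. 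The high-energy part $\frac{2}{\pi}\nabla^{*}\int_{1}^{\infty}(\vec\Delta+k^{2})^{-1}(\nabla f)\,dk$ is essentially local: for $k\ge 1$ the resolvent kernels $(\vec\Delta+k^{2})^{-1}(x,y)$ decay like $e^{-ck\,d(x,y)}$, so the $k$-integral leaves a kernel with exponential off-diagonal decay. Since outside a compact set $\mathcal M$ is isometric to ends of $\mathbb R^{n_\pm}\times\mathcal M_\pm$ it has bounded geometry, hence local doubling and a local Poincaré inequality; a localised version of the Sobolev-type Calder\'on--Zygmund decomposition of \cite{AC}, which is available for this reason, together with the exponential decay, gives $\|\frac{2}{\pi}\nabla^{*}\int_{1}^{\infty}(\vec\Delta+k^{2})^{-1}(\nabla f)\,dk\|_p\lesssim\|\,|\nabla f|\,\|_p$ for all $p\in(1,\infty)$; here the cutoff at $k=1$ is essential, as it suppresses the zeroth-order contribution $\|f\|_p$ that such a local estimate carries in general.

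The heart of the matter is the low-energy part $\frac{2}{\pi}\nabla^{*}\int_0^{1}(\vec\Delta+k^{2})^{-1}(\nabla f)\,dk$, and this is where I expect the main obstacle to lie. Here I would insert the low-energy parametrix of \cite{CCH,HS} for $(\Delta+k^2)^{-1}$, $0<k\le1$, which assembles the resolvent on $\mathcal M$ from the model resolvents on $\mathbb R^{2}\times\mathcal M_-$ and $\mathbb R^{n_+}\times\mathcal M_+$ glued across the neck, together with the attendant error terms; commuting $\nabla$ through and using $\Delta=\nabla^{*}\nabla$ amounts on each end to an integration by parts in the Euclidean variable, which is what permits the output to be written genuinely in terms of $\nabla f$ and not $f$. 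On the $\mathbb R^{n_+}\times\mathcal M_+$ end this is precisely the analysis one carries out for Theorem~\ref{thm_MainResult}. On the $\mathbb R^{2}\times\mathcal M_-$ end the lowest-mode part of the model resolvent is $\tfrac1{2\pi}K_0(k|x-y|)$, and the matching of the two ends of different dimension produces the logarithmic factors $\log(1/k)$ that are responsible for the failure of the forward Riesz transform beyond $L^2$ on \eqref{eq_manifold_criticalcase} \cite{HNS}. The decisive gain is that one works with exact forms: integrating by parts in the $\mathbb R^2$-variable of the critical end one arranges that the Bessel kernel $K_0(k|\cdot|)$ is always differentiated, and since $K_0'=-K_1$ with $K_1(z)\sim z^{-1}$ as $z\to0^+$ (whereas $K_0(z)\sim-\log(z/2)$), this removes the offending $\log k$ and leaves, uniformly for $k\in(0,1]$, a kernel with harmless Euclidean-type decay. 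The remaining — and principal — task is to show that after this reduction the various cross-terms between the two ends and the error terms of the parametrix, integrated in $k$ against $dk$ up to $k=1$, assemble into an operator bounded on $L^p$ for every $p\in(1,2)$ (in fact for every $p\in(1,\infty)$), uniformly down to the bottom of the spectrum. This requires sharp estimates on $K_0$, $K_1$ and their derivatives at small and large argument and a careful accounting of the surviving $\log(1/k)$ factors, showing that at most a benign, $k$-integrable logarithmic loss remains; it refines the corresponding step in the proof of Theorem~\ref{thm_MainResult}. Combining the high- and low-energy bounds gives \eqref{eq_RRp} for $p\in(1,2)$, and with the duality step the theorem follows for all $1<p<\infty$.
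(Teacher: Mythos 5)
Your duality reduction to $p\in(1,2)$ is correct and matches the paper, and your treatment of the high-energy piece is in the right spirit (the paper invokes \cite[Proposition~4.5]{HNS} directly). The gap is in the low-energy analysis, which is the entire substance of the theorem, and which you explicitly flag as ``the remaining --- and principal --- task'' without carrying it out. More importantly, the route you sketch has two concrete obstructions that make it unlikely to close as written.

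First, once you pass to $\nabla^{*}(\vec\Delta+k^{2})^{-1}\nabla f$ you need a low-energy parametrix for the resolvent of the Hodge Laplacian on $1$-forms, but the parametrix you cite (\cite{CCH,HS,HNS}) is built for the \emph{scalar} resolvent $(\Delta+k^2)^{-1}$; a form-valued analogue is additional, nontrivial work that the cited papers do not supply. If you instead undo the intertwining and write $\nabla^{*}(\vec\Delta+k^{2})^{-1}\nabla f=\Delta(\Delta+k^{2})^{-1}f$ so as to use the scalar parametrix, then the input is $f$ rather than $\nabla f$, and some Hardy/Poincar\'e-type inequality is indispensable to get back to $\|\,|\nabla f|\,\|_p$ --- which is exactly the ingredient your plan omits. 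Second, the logarithm you propose to kill by differentiating $K_0$ is not the one that obstructs the argument. As Remark~\ref{remark} explains, the genuine obstruction on the $\mathbb R^2\times\mathcal M_-$ end is that the uniform estimate \eqref{eq_uni} degrades to $\|\nabla[u_-(\cdot,k)-u_-(\cdot,0)]\|_\infty\lesssim\mathrm{ilg}(k)+O(k)$, and $\int_0^{k_0}\mathrm{ilg}(k)\,k^{-2/p}\,dk$ diverges for $1<p<2$; this $\mathrm{ilg}(k)$ factor lives in the matching data of the parametrix (cf.\ Lemma~\ref{lemma_key_critical_dimension}), not in the singularity of $K_0$ at the origin, so the observation $K_0'=-K_1$ does not reach it.

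The paper's proof avoids both issues by what Section~\ref{section_refine} calls \emph{implicit} harmonic annihilation: rather than subtracting $u_\pm(\cdot,0)$ (whose difference with $u_\pm(\cdot,k)$ has only the inadequate $\mathrm{ilg}(k)$ bound), one integrates by parts against $\nabla f$ directly and uses $\Delta u_\pm=-\Delta\phi_\pm-k^2u_\pm$ so that the $-\Delta\phi_\pm$ term cancels the $\nabla\phi_\pm$ contribution from $G_1$ and a factor $k^2$ appears for free. This turns $\nabla f$ into $f$, which is then controlled via the weighted Hardy inequality of Lemma~\ref{thm_Hardy_M}, $\|f/|\cdot|\|_p\lesssim\|\,|\nabla f|\,\|_p$ for $1<p<n_*$, combined with the $L^{p'}$-boundedness of the weighted operator $\mathcal T_\pm$ (Lemma~\ref{lemma_critical}). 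Both the $k^2$ gain and the Hardy inequality are essential and are absent from your sketch; without a replacement for them, the logarithmic loss at $k\to0$ persists.
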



\section{Parametrix Construction for Resolvent at Low Energy}

In this section, we briefly introduce the parametrix construction for the resolvent at low energy and we refer readers to \cite[Section 3]{HS} and \cite[Section 3,4]{CCH} for detailed analysis. The parametrix construction is based on the following lemma from \cite[Lemma 2.7]{HS}. Let $z_i^0$ be some fixed point in $K_i$ to play the role of origin. Without loss of generality, we may assume that $\inf_{x\in E_i}d(z_i^0,x)\ge 1$ for all $1\le i\le l$. In what follows, we will keep most of notations used in \cite{HS} for convenience. 

\begin{lemma}\cite[Lemma 2.7]{HS}\label{leKey}
Let $\mathcal{M}$ be defined by \eqref{eq_Manifold} with $l\ge 2$ and $n_* = \min_i n_i \ge 3$. Let $v \in L^{\infty}(\mathcal{M})$ with compact support in $K$. Then there is a function $u: \mathcal{M} \times \mathbb{R}^+ \to  \mathbb{R}$ such that $(\Delta + k^2)u = v$ and on the $i$th end
\begin{gather*}\label{2.7e}
|u(z, k)| \le  C \|v\|_\infty \begin{cases}
    1, & z\in K,\\
    d(z_i^0 , z)^{2-n_i} e^{- ck d(z_i^0, z)}, & z \in  E_i.
\end{cases}
\\
|\nabla u(z, k)| \le  C \|v\|_\infty \begin{cases}
    1, & z\in K,\\
    d(z_i^0 , z)^{1-n_i} e^{- ck d(z_i^0, z)}, & z\in E_i,
\end{cases}
\end{gather*}
for all $0\le k\le 1$, $1\le i\le l$. 

In addition, 
\begin{gather*}
    \|u(\cdot,k)-u(\cdot,0)\|_{L^\infty(\mathcal{M})}\le C k \|v\|_\infty,\\
    \left\||\nabla [u(\cdot,k)-u(\cdot,0)]| \right\|_{L^\infty(\mathcal{M})}\le C k \|v\|_\infty 
\end{gather*}
for all $0\le k\le 1$.

\end{lemma}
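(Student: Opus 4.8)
\emph{Proof proposal.} The plan is to prove the lemma by a low-energy parametrix construction for $(\Delta+k^2)^{-1}$, patching together explicit model resolvents on the ends with a fixed interior resolvent near $K$, in the spirit of \cite{CCH,HS}. Since each end $E_i$ is isometric to $(\mathbb{R}^{n_i}\times\mathcal{M}_i)\setminus K_i$ with $\mathcal{M}_i$ compact, the resolvent on the model space $\mathbb{R}^{n_i}\times\mathcal{M}_i$ is accessible by separation of variables: expanding in an orthonormal basis of eigenfunctions of $\Delta_{\mathcal{M}_i}$ with eigenvalues $0=\lambda_0<\lambda_1\le\lambda_2\le\cdots$, the zero mode contributes $(\Delta_{\mathbb{R}^{n_i}}+k^2)^{-1}$, whose convolution kernel is $c_{n_i}(k/|x-y|)^{(n_i-2)/2}K_{(n_i-2)/2}(k|x-y|)$, while each higher mode contributes $(\Delta_{\mathbb{R}^{n_i}}+\lambda_j+k^2)^{-1}$ with $\lambda_j\ge\lambda_1>0$, whose kernel decays like $e^{-c\sqrt{\lambda_1}\,|x-y|}$ uniformly for $0\le k\le 1$. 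From the small- and large-argument asymptotics of the Macdonald function $K_\nu$ (namely $K_\nu(r)\sim\Gamma(\nu)2^{\nu-1}r^{-\nu}$ as $r\to0$ and $K_\nu(r)\sim\sqrt{\pi/2r}\,e^{-r}$ as $r\to\infty$, with $K_\nu'=-\tfrac12(K_{\nu-1}+K_{\nu+1})$) one reads off that the full model resolvent $R_i(k)$ has kernel bounded by $C\,d(x,y)^{2-n_i}e^{-ck\,d(x,y)}$, with $x$-gradient bounded by $C\,d(x,y)^{1-n_i}e^{-ck\,d(x,y)}$, and that $R_i(k)-R_i(0)=O(k)$, as well as $\nabla(R_i(k)-R_i(0))=O(k)$, in the relevant uniform sense (here one absorbs the polynomial corrections coming from the Bessel asymptotics into a slightly smaller constant $c$ in the exponent).

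Next I would fix cutoffs $\phi_0,\phi_1,\dots,\phi_l$ with $\phi_0\equiv1$ on a neighbourhood of $K$ and compactly supported, each $\phi_i$ ($i\ge1$) supported in $E_i$ and equal to $1$ outside a compact set, and $\phi_0+\sum_i\phi_i\equiv1$; together with $\psi_i\equiv1$ on $\operatorname{supp}\phi_i$, $\psi_i$ supported in $E_i$ for $i\ge1$ and $\psi_0$ compactly supported; and an interior resolvent $R_0(k)=(\Delta_\Omega+k^2)^{-1}$ for a large relatively compact $\Omega\supset\operatorname{supp}\psi_0$ with Dirichlet conditions, which is uniformly bounded and smooth in $k\in[0,1]$. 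Set $G(k)=\sum_{i=0}^l\psi_iR_i(k)\phi_i$. A direct computation gives $(\Delta+k^2)G(k)=\operatorname{Id}+E(k)$ with $E(k)=\sum_{i=0}^l[\Delta,\psi_i]R_i(k)\phi_i$, a sum of first-order operators whose coefficients are supported in the compact "neck" regions $\operatorname{supp}d\psi_i$; hence $E(k)$ has a kernel that is smooth and compactly supported in both variables, depends continuously on $k\in[0,1]$ with $E(k)-E(0)=O(k)$ in $\mathcal{L}(L^\infty)$, and $E(0)$ is compact. The operator $\operatorname{Id}+E(0)$ is invertible: a null vector would yield, via $G(0)$, a bounded (indeed decaying) solution of $\Delta w=0$ on $\mathcal{M}$, which must vanish by non-parabolicity (all $n_i\ge3$) and the maximum principle. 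A Neumann-series perturbation then keeps $\operatorname{Id}+E(k)$ invertible with uniformly bounded inverse for $k$ small, and $u(\cdot,k):=G(k)\big(\operatorname{Id}+E(k)\big)^{-1}v$ solves $(\Delta+k^2)u=v$ exactly.

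It remains to read off the estimates. Writing $\big(\operatorname{Id}+E(k)\big)^{-1}v=v-E(k)\big(\operatorname{Id}+E(k)\big)^{-1}v=:v+w(k)$, both $v$ and $w(k)$ are supported in a fixed compact set $\mathcal{K}\supset K$ and $\|w(k)\|_\infty\le C\|v\|_\infty$, so applying $G(k)$ reduces to applying the model resolvents $\psi_iR_i(k)$ to data supported in $\mathcal{K}$. For $z\in E_i$, using $d(z,x)\asymp d(z_i^0,z)$ for $x\in\mathcal{K}$ (recall $\inf_{E_i}d(z_i^0,\cdot)\ge1$) and $\mu(\mathcal{K})<\infty$,
\begin{equation*}
|u(z,k)|\le C\|v\|_\infty\int_{\mathcal{K}}d(z,x)^{2-n_i}e^{-ck\,d(z,x)}\,d\mu(x)\le C\|v\|_\infty\,d(z_i^0,z)^{2-n_i}e^{-ck\,d(z_i^0,z)},
\end{equation*}
and likewise for $|\nabla u(z,k)|$ with exponent $1-n_i$; on $K$ the uniform boundedness of $R_0(k)$ and of $R_i(k)$ restricted to the compact piece gives $|u|+|\nabla u|\le C\|v\|_\infty$. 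The $k$-continuity estimates follow in the same way from $R_i(k)-R_i(0)=O(k)$ and $\nabla(R_i(k)-R_i(0))=O(k)$ (i.e. $\partial_k$ of $(kr)^\nu K_\nu(kr)$ and of its spatial derivative are $O(1)$ uniformly once integrated against data supported in $\mathcal{K}$) combined with $E(k)-E(0)=O(k)$ and the product rule.

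I expect the main obstacles to be: (i) the invertibility and the uniform-in-$k$ control of $\operatorname{Id}+E(k)$ as $k\downarrow0$, which is exactly where the non-parabolicity hypothesis $n_i\ge3$ is used and which is the delicate point of the parametrix construction in \cite{CCH,HS}; and (ii) tracking the Bessel/Macdonald asymptotics carefully enough that the model kernels genuinely produce the polynomial prefactors $d^{2-n_i}$ and $d^{1-n_i}$ together with the exponential factor $e^{-ck\,d}$, uniformly for $0\le k\le1$, including absorbing the contribution of the non-zero $\mathcal{M}_i$-modes by means of the spectral gap $\lambda_1>0$ of $\Delta_{\mathcal{M}_i}$.
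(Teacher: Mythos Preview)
The paper does not give its own proof of this lemma: it is quoted verbatim as \cite[Lemma~2.7]{HS} and used as a black box on which the parametrix in Section~2 is built. Your sketch is consistent with the argument in \cite{HS} (and the closely related one in \cite{CCH}): patch model resolvents $R_i(k)$ on the ends with an interior resolvent, compute the compactly supported error $E(k)$, and correct. One caution about your step (i): the inference ``$(\mathrm{Id}+E(0))f=0 \Rightarrow w:=G(0)f$ is harmonic and decaying $\Rightarrow w=0$'' is fine, but concluding $f=0$ from $G(0)f=0$ needs an extra word (e.g.\ $f=-E(0)f$ forces $f$ to be supported in the neck, and then one argues via the injectivity of the interior/model pieces, or one bypasses the issue by adding a finite-rank correction as in \cite{CCH}). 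This is exactly the delicate point you flag, and it is handled in \cite{HS} rather than in the present paper.
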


Now, for each $1\le i\le l$, we pick a cut-off function $\phi_i\in \mathcal{C}^\infty(\mathcal{M})$ which supported in the $i$th ends, $E_i$, such that $\phi_i=1$ outside a compact set containing the "unit ball" $K_i$. Define $v_i=-\Delta \phi_i$ which is apparently supported in a compact set near $K_i$. Then we set
\begin{equation}\label{eq_u}
    u_i(z,k):= (\Delta+k^2)^{-1}v_i \quad \forall 0\le k\le 1,
\end{equation}
where the existence of $u_i$ is guaranteed by Lemma \ref{leKey}.

The main idea is to decompose the resolvent into four pieces 
\begin{equation*}
    (\Delta+k^2)^{-1} = G_1(k) + G_2(k) + G_3(k) +G_4(k) \quad k\le 1.
\end{equation*}
Particularly, we write down explicit formulas for $G_1$ and $G_3$ for later use.
\begin{gather}\label{G1}
    G_1(k)(z,z') = \sum_{i=1}^l (\Delta_i +k^2)^{-1}(z,z') \phi_i(z) \phi_i(z'),\\ \label{G3}
    G_3(k)(z,z') = \sum_{i=1}^l (\Delta_i +k^2)^{-1}(z_i^0,z') u_i(z,k)\phi_i(z'),
\end{gather}
where $\Delta_i = \Delta_{\mathbb{R}^{n_i}\times \mathcal{M}_i}$ is the Laplace-Beltrami operator on $\mathbb{R}^{n_i}\times \mathcal{M}_i$. The constructions of the parametrix in \cite{CCH} and \cite{HS} are close but different and we follow the version in \cite{HS}. One of the main differences between those two constructions is how to handle the "error" term $\Tilde{E}(k)$ which satisfies equation
\begin{equation*}
    (\Delta+k^2)[G_1(k)+G_2(k)+G_3(k)] = I + \Tilde{E}(k).
\end{equation*}
In \cite{CCH}, Carron, Coulhon and Hassell added a finite rank term, which they also denote by $G_4$ ($G_4$ is not depending on $k$), to make operator $I+E(k)$ invertible for all sufficiently small $k$, where $E(k)$ is the new "error" term such that $I+E(k) = (\Delta+k^2)[G_1(k)+G_2(k)+G_3(k)+G_4]$. In \cite{HS}, Hassell and Sikora defined $G_4(k)$ via Lemma~\ref{leKey} by writing $G_4(k)(z,z'):= -(\Delta+k^2)^{-1}(\Tilde{E}(k)(\cdot,z'))(z)$. The method used in \cite{HS} can be applied to \cite{CCH} when $n\ge 3$.

We summarize some results and estimates from \cite{HS} in the following.

$(\romannumeral1)$ $G_2(k)(z,z')$ is compactly supported in a small neighbourhood of the diagonal $K \times K$. It is a family of pseudodifferential operator with order $-2$ depending on $k$ smoothly. Therefore, $\int_0^1 \nabla G_2(k)dk$ acts as a pseudodifferential operator with order $-1$ which is bounded on all $L^p$ i.e.
\begin{equation}\label{eq_G2_bdd}
    \left\|\left|\int_0^1 \nabla G_2(k)fdk\right|\right\|_p \le C\|f\|_p \quad \forall 1<p<\infty \quad \forall f\in \mathcal{C}_c^\infty(\mathcal{M}).
\end{equation}

$(\romannumeral2)$ $G_4(k)$ is the "correction" term whose kernel has nice decay property and satisfies the equation
\begin{equation*}
    (\Delta+k^2)[G_1(k)+G_2(k)+G_3(k)] = \textrm{Id} - (\Delta+k^2)G_4(k).
\end{equation*}
After careful calculation, it turns out that its contribution to the Riesz transform is also bounded on all $L^p$ i.e.
\begin{equation}\label{eq_G4_bdd}
    \left\|\left|\int_0^1 \nabla G_4(k)f dk\right| \right\|_p \le C \|f\|_p \quad \forall 1<p<\infty \quad \forall f\in \mathcal{C}_c^\infty(\mathcal{M}).
\end{equation}

$(\romannumeral3)$ Note that for each $i$, $v_i=-\Delta \phi_i$ so the function $\Phi_i(z):= \phi_i(z) + u_i(z,0)$ is a harmonic function which tends to 1 at the infinity of $E_i$ and to 0 at the infinity of $E_j$ for all $j\ne i$.

$(\romannumeral4)$ For each $i$, we have following resolvent estimates \cite[(17)-(19)]{HS}
\begin{gather}\label{eq_resolvent_upper}
    (\Delta_i+k^2)^{-1}(z,z') \le C \left(d(z,z')^{2-N}+d(z,z')^{2-n_i}\right) \textrm{exp}\left(-ck d(z,z')\right), \\ \label{eq_resolvent_derivative}
    |\nabla_z (\Delta_i+k^2)^{-1}(z,z')| \le C \left(d(z,z')^{1-N}+d(z,z')^{1-n_i}\right) \textrm{exp}\left(-ck d(z,z')\right).
\end{gather}

\section{Proof of The Main Result: Harmonic Annihilation}\label{Section_proof}

In this section, we prove Theorem \ref{thm_MainResult}, which serves as a prototype for our further discussion, and present some direct consequences of Theorem~\ref{thm_MainResult}. From now on, sometimes we may use notion $A\lesssim B$ to denote $A\le C B$ for some constant $C>0$ and in the same spirit, one uses $A\gtrsim B$ and $A \simeq B$. We also write $dz = d\mu(z)$ for simplicity.

To better describe the idea of the proof, we consider the special case when $\mathcal{M} = \mathbb{R}^n \# \mathbb{R}^n$ for $n\ge 3$. Let us repeat the argument from \cite[Introduction]{CCH} a little bit. We compactify $\mathcal{M}$ to a compact manifold $\Tilde{\mathcal{M}}$ by adding spheres at both of its ends. We denote the boundaries of ends by $\partial \mathcal{M}_1$ and $\partial \mathcal{M}_2$ (which is also the boundary of $\Tilde{\mathcal{M}}$ i.e. $\partial \Tilde{\mathcal{M}} = \partial \mathcal{M}_1 \cup \partial \mathcal{M}_2$). Then, Carron, Coulhon and Hassell noticed that the kernel of $\Delta^{-1/2}$ has the following expansion when $z'\to \partial \Tilde{\mathcal{M}}$ ($z\in \Tilde{\mathcal{M}}$ fixed)
\begin{equation}\label{eq_CCH}
    \Delta^{-1/2}(z,z') \sim \sum_{j=n-1}^\infty a_j(z) |z'|^{-j}, 
\end{equation}
where $|\cdot|:= \sup_{y\in K}d(\cdot,y)$ and the leading coefficient $a_{n-1}(z)$ is a harmonic function on $\Tilde{\mathcal{M}}$ which tends to 1 on $\partial \mathcal{M}_1$ and to 0 on $\partial \mathcal{M}_2$. The non-vanishing property of $\nabla a_{n-1}$ (maximal principle) makes the decay order of the kernel of the Riesz transform on the right variable, $z'$, to $n-1$. This phenomenon terminates the $L^p$ boundedness of the leading term to $p=n$ (think of it as an individual operator with kernel defined on $(\Tilde{\mathcal{M}}\times \partial \Tilde{\mathcal{M}})$), since the leading term, $|z'|^{1-n}$, can only match with functions in $L^{n-\epsilon}(\partial \Tilde{\mathcal{M}})$ ($\epsilon>0$), and this issue also, in a sense, explains why \eqref{R_p} can not hold beyond the exponent $p=n$. To transit this idea to the reverse inequality, we note that for suitable $f,g$, 
\begin{equation}\label{eq_moti}
    \langle \Delta^{1/2}f, g \rangle = \langle \nabla f, \nabla \Delta^{-1/2}g \rangle.
\end{equation}
Suppose $g$ is supported near the boundary $\partial \Tilde{\mathcal{M}}$. By using the expansion formula above, the leading term of the inner product behaves like
\begin{gather*}
\int_{\Tilde{\mathcal{M}}} \nabla f(z) \cdot \nabla a_{n-1}(z) \int_{\partial \Tilde{\mathcal{M}}} |z'|^{1-n}g(z')dz' dz = \int_{\partial \Tilde{\mathcal{M}}} |z'|^{1-n}g(z') \int_{\Tilde{\mathcal{M}}} f(z)  \Delta a_{n-1}(z) dz dz' = 0
\end{gather*}
since $a_{n-1}$ is harmonic. This implies that the "problematic" leading term vanishes, and the new leading term $\nabla a_n(z)|z'|^{-n}$ has at least a nice decay in the right variable i.e. $|z'|^{-n}\in L^p(\partial \Tilde{\mathcal{M}})$ for all $p\in(1,\infty)$. This observation allows us to expect that \eqref{eq_RRp} may hold beyond the dual range $(n',\infty)$.

\begin{proof}[Proof of Theorem \ref{thm_MainResult}]
By \cite[Theorem 1.2]{HS}, we know that on $\mathcal{M}$ (recall $n_*=\min_j n_j$, the smallest Euclidean dimension among all the ends and $n_*'$ its conjugate exponent)
\begin{equation*}
    \||\nabla f|\|_{p} \le C \|\Delta^{1/2}f\|_{p} \quad \forall 1<p<n_* \quad \forall f\in \mathcal{C}_c^\infty(\mathcal{M}).
\end{equation*}
It then follows by duality \cite[Proposition 2.1]{CD}
\begin{equation*}
    \|\Delta^{1/2}f\|_{p} \le C \||\nabla f|\|_{p} \quad \forall n_*' < p < \infty \quad \forall f\in \mathcal{C}_c^\infty(\mathcal{M}).
\end{equation*}
Therefore, to complete the proof, it suffices to verify \eqref{eq_RRp} for $p\in (1,n_*']$. In what follows, we use a duality argument to prove this inequality. That is we prove 
\begin{gather*}
    \left|\left\langle \Delta^{1/2}f, g \right\rangle \right| \le C\||\nabla f|\|_p \|g\|_{p'}
\end{gather*}
for all $g\in \mathcal{C}_c^\infty(\mathcal{M})$ with $\|g\|_{p'}=1$ and the constant is independent of $g$. First of all, for any $f,g\in \mathcal{C}_c^\infty(\mathcal{M})$, since $\Delta$ is positive and essentially self-adjoint, we have
\begin{equation*}
    \langle \Delta (\Delta+k^2)^{-1}f, g \rangle = \langle f, \Delta (\Delta+k^2)^{-1}g \rangle = \langle \nabla f, \nabla (\Delta+k^2)^{-1}g \rangle \quad \forall k>0.
\end{equation*}
Note that, by spectral theory
\begin{equation*}
    \Delta^{1/2}f =  \frac{2}{\pi} \int_0^\infty \Delta (\Delta+k^2)^{-1}f dk, 
\end{equation*}
where the integral should be understood as the truncated limit: $\lim_{\epsilon \to 0;R\to \infty}\int_{\epsilon}^{R}$, see \cite{AC}. However, in the rest of the proof, we keep using the above notion for simplicity and all estimates are uniform in $\epsilon, R$.

Compare with \eqref{eq_delta-1/2}. We have
\begin{equation*}
    \langle \Delta^{1/2}f, g \rangle = \left\langle \nabla f, \nabla \int_0^\infty (\Delta+k^2)^{-1}g dk \right\rangle,
\end{equation*}
where we omit the constant since it plays no role in the following argument. Follow the ideas from \cite{CCH} and \cite{HS}, the Riesz transform can be divided into the summation of low and high energy parts. That is
\begin{equation*}
    \nabla \int_0^\infty (\Delta+k^2)^{-1} dk = \nabla \int_0^1 (\Delta+k^2)^{-1} dk + \nabla \int_1^\infty (\Delta+k^2)^{-1} dk.
\end{equation*}
Now, recall \cite[Proposition 5.1]{HS}, the high energy part of Riesz transform is well bounded on all $L^q$ spaces i.e.
\begin{equation*}
    \left\| \left|\nabla \int_1^\infty (\Delta+k^2)^{-1}g dk \right| \right\|_q \le C \|g\|_q \quad \forall 1<q<\infty
\end{equation*}
and hence its contribution to the inner product can be estimated by
\begin{equation*}
    \left|\left\langle \nabla f, \nabla \int_1^\infty (\Delta+k^2)^{-1}g dk \right\rangle \right| \le C \||\nabla f|\|_p \|g\|_{p'}. 
\end{equation*}
In the sequel, with the help of parametrix construction introduced in the last section. It suffices to consider the following inner product
\begin{equation*}
    \left\langle \nabla f, \nabla \int_0^1 (\Delta+k^2)^{-1}g dk \right\rangle = \left\langle \nabla f, \sum_{j=1}^4 \int_0^1 \nabla G_j(k)g dk \right\rangle.
\end{equation*}
Notice that by \eqref{eq_G2_bdd} and \eqref{eq_G4_bdd}, we have
\begin{equation*}
    \left\| \left|\int_0^1 \nabla G_j(k)gdk \right| \right\|_{q} \le C\|g\|_{q} \quad \forall 1<q<\infty \quad j=2,4.
\end{equation*}
Hence, by a similar argument for the high energy part, their corresponding inner products are bounded by $C\||\nabla f|\|_p\|g\|_{p'}$. Our next goal is to treat $\nabla G_1$ part. Note that by the explicit formula \eqref{G1}, when the gradient hits $G_1(k)$, it will produce 2 terms for each $i$ which can be written as 
\begin{equation*}
    \nabla (\Delta_i+k^2)^{-1}(z,z') \phi_i(z) \phi_i(z') + (\Delta_i+k^2)^{-1}(z,z') \nabla \phi_i(z) \phi_i(z') := G_{11} + G_{12}.
\end{equation*}
However, the first term acts as a bounded operator on all $L^q$ spaces. Indeed, the first term corresponds to the following operator in the Riesz transform: 
\begin{align*}
    \phi_i \left( \nabla \Delta_{i}^{-1/2}\right)\left(\pi/2-\arctan{\sqrt{\Delta_i}}\right) \phi_i,
\end{align*}
where its boundedness is guranteed by a standard result of the Riesz transform and a multiplier theorem in \cite{CS}. Therefore, for the same reason as before, the inner product corresponding to $G_{11}$ is again bounded by $C\||\nabla f|\|_p\|g\|_{p'}$.

With argument above, we are left to consider the inner product corresponding to $G_{12}+\nabla G_3(k)$. We write down an explicit formula by using \eqref{G3}
\begin{align*}
    \Bigg \langle &\nabla f, \sum_{i=1}^l \int_{\mathcal{M}} \int_0^1 \big[\nabla \phi_i(z)(\Delta_i+k^2)^{-1}(z,z')\phi_i(z')+ \\
    &\nabla u_i(z,k)(\Delta_i+k^2)^{-1}(z_i^0,z')\phi_i(z')\big] dk g(z')dz' \Bigg \rangle_{L^2(\mathcal{M},dz)}.
\end{align*}

Now, the crux is to find the "harmonic part" of each $1\le i\le l$. We proceed with the following strategy, see also \cite{HS}. The first step is to replace $"z"$ in the resolvent factor above by $"z_i^0"$ i.e. for each $1\le i\le l$, we write 
\begin{equation*}
    (\Delta_i+k^2)^{-1}(z,z') = \left[(\Delta_i+k^2)^{-1}(z,z')-(\Delta_i+k^2)^{-1}(z_i^0,z')\right] + (\Delta_i+k^2)^{-1}(z_i^0,z').
\end{equation*}
Observe that $\nabla \phi_i$ is supported in a small "ring" near the connection $K$. We claim that the kernel of the "difference part": 
\begin{gather}\label{eq_difference}
    \int_0^1 \left(\nabla \phi_i(z)[(\Delta_i+k^2)^{-1}(z,z')- (\Delta_i+k^2)^{-1}(z_i^0,z')]\phi_i(z')\right) dk
\end{gather}
acts as a bounded operator on $L^q$ for all $1<q<\infty$. 

\begin{lemma}\label{lemma_difference_Lq}
Under the assumptions of Theorem~\ref{thm_MainResult}, the operator with kernel given by \eqref{eq_difference} is bounded on $L^q$ for all $1<q<\infty$.   
\end{lemma}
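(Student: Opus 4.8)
The plan is to show that the kernel in \eqref{eq_difference} is dominated by an integrable convolution-type kernel in the Euclidean variables, after which $L^q$-boundedness for all $1<q<\infty$ follows by Young's inequality (in fact the operator will be bounded on $L^1$ and $L^\infty$ simultaneously, hence on everything in between, modulo the mild non-doubling issue which is harmless here because the kernel is supported on a single product end). First I would record that $\nabla\phi_i(z)$ is supported in a fixed compact ``ring'' $R_i\subset E_i$ bounded away from infinity, so on the $z$-side the kernel lives on a compact set; in particular the only way the kernel can fail to be nicely bounded is through the behaviour in $z'$ as $z'\to\infty$ in $E_i$. Thus the whole point is to exploit the cancellation between $(\Delta_i+k^2)^{-1}(z,z')$ and $(\Delta_i+k^2)^{-1}(z_i^0,z')$ for $z$ in the compact ring and $z'$ far away.

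Next I would carry out the key estimate: for $z\in R_i$ (so $d(z_i^0,z)\le C_0$ for a fixed $C_0$) and $d(z_i^0,z')$ large, write the difference of resolvent values as an integral of the gradient along a path from $z$ to $z_i^0$, i.e. bound
\begin{equation*}
 \bigl|(\Delta_i+k^2)^{-1}(z,z')-(\Delta_i+k^2)^{-1}(z_i^0,z')\bigr|\le d(z,z_i^0)\,\sup_{w\in \gamma}\bigl|\nabla_w (\Delta_i+k^2)^{-1}(w,z')\bigr|,
\end{equation*}
where $\gamma$ is a geodesic (or fixed path inside the compact region) joining $z$ to $z_i^0$. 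Since all such $w$ stay in a compact set while $z'$ is far, $d(w,z')\simeq d(z_i^0,z')$, and by the gradient bound \eqref{eq_resolvent_derivative} the supremum is $\lesssim d(z_i^0,z')^{1-n_i}\exp(-c\,k\,d(z_i^0,z'))$ (the $d^{1-N}$ term being dominated since $d$ is large and $N\ge n_i$). Integrating in $k$ over $[0,1]$ contributes an extra factor $\simeq \min\{1, d(z_i^0,z')^{-1}\}$, which for large $d(z_i^0,z')$ gives an additional $d(z_i^0,z')^{-1}$. Hence the kernel of \eqref{eq_difference} is bounded, up to a constant and up to the compactly supported cutoffs, by $d(z_i^0,z')^{-n_i}$ for $z'$ far out in $E_i$, and is clearly bounded on the remaining compact region. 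Identifying $E_i$ with $(\mathbb{R}^{n_i}\times\mathcal{M}_i)\setminus K_i$ and using that $\mathcal{M}_i$ is compact of finite volume, $\int_{E_i} d(z_i^0,z')^{-n_i}\,dz' \simeq \int_{|x|\ge 1}|x|^{-n_i}\,dx <\infty$ precisely because $n_i\ge 3>1$ — actually we only need $n_i> 0$ here after the extra $k$-integration gain, but in any case finiteness is immediate. Combined with the compact support in $z$, the kernel is in $L^\infty_z L^1_{z'}$ and in $L^\infty_{z'}L^1_z$, so Schur's test gives boundedness on every $L^q$, $1<q<\infty$ (and on $L^1$, $L^\infty$ as well).

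The main obstacle is making the path-integral gradient bound uniform in the small parameter $k$ and justifying that the extra $k$-integration really produces the decisive extra decay in $z'$: one must be careful that the gradient estimate \eqref{eq_resolvent_derivative} holds with a constant independent of $k\in[0,1]$ and of the point $w$ ranging over the compact path, and that the exponential factor $\exp(-c\,k\,d(z_i^0,z'))$ is legitimately used to gain $\int_0^1 e^{-ckr}\,dk\simeq r^{-1}$ when $r=d(z_i^0,z')\ge 1$. A secondary technical point is the non-doubling nature of $\mathcal{M}$: since the kernel here is entirely supported on the single end $E_i\cong(\mathbb{R}^{n_i}\times\mathcal{M}_i)\setminus K_i$, which \emph{is} a doubling space, Schur's test applies without any global doubling hypothesis, so this causes no real difficulty. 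Finally I would remark that, since the geodesic from a point of the compact ring to $z_i^0$ can be taken to lie in a fixed compact neighbourhood of $K_i$ on which the metric is uniformly comparable to the product metric, all the ``$\simeq$'' comparisons of distances are legitimate, which closes the argument.
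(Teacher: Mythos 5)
Your overall strategy (mean‐value/gradient bound for $z$ in the compact ring $D_i$ and $z'$ far away, combined with the extra factor $d(z_i^0,z')^{-1}$ gained from $\int_0^1 e^{-ckr}\,dk\simeq r^{-1}$) matches the paper's treatment of the remote region, and the resulting pointwise bound
$\chi_{D_i}(z)\,\chi_{E_i}(z')\,d(z_i^0,z')^{-n_i}$
is correct. The place where the argument breaks down is the integrability claim that follows. You assert
\begin{equation*}
\int_{E_i} d(z_i^0,z')^{-n_i}\,dz' \simeq \int_{|x|\ge 1}|x|^{-n_i}\,dx <\infty,
\end{equation*}
but this integral diverges: in $\mathbb{R}^{n_i}$ one has $\int_{|x|\ge 1}|x|^{-n_i}\,dx = c\int_1^\infty r^{-n_i}\,r^{n_i-1}\,dr = c\int_1^\infty r^{-1}\,dr = \infty$, a logarithmic divergence regardless of $n_i$. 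Consequently the kernel is \emph{not} in $L^\infty_z L^1_{z'}$, the Schur test does not give $L^\infty\to L^\infty$ boundedness, and indeed the remote part fails to be bounded on $L^\infty$. (It is still bounded on $L^1$, since $\sup_{z'\in E_i}\int_{D_i} d(z_i^0,z')^{-n_i}\,dz\le |D_i|(2\sigma)^{-n_i}<\infty$, but that alone does not reach every $L^q$.)

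The correct way to close the gap — and what the paper actually does — is to exploit the tensor‐product structure of the remote kernel: since it is dominated by $\chi_{D_i}(z)\otimes d(z_i^0,z')^{-n_i}\chi_{E_i}(z')$, Hölder's inequality gives
$\|Tg\|_{L^q}\le \|\chi_{D_i}\|_{L^q}\,\bigl\|d(z_i^0,\cdot)^{-n_i}\bigr\|_{L^{q'}(E_i)}\,\|g\|_{L^q}$,
and $\|d(z_i^0,\cdot)^{-n_i}\|_{L^{q'}(E_i)}^{q'}\simeq \int_1^\infty r^{-n_iq'+n_i-1}\,dr<\infty$ exactly when $q'>1$, i.e.\ $q<\infty$. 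This yields boundedness for $1<q<\infty$ but legitimately excludes the endpoints. You should also be slightly more careful about the ``local'' region ($d(z_i^0,z')\lesssim 1$), where the kernel has a genuine diagonal singularity; there the paper bounds the two resolvent terms separately by $d(z,z')^{2-N}$ and $d(z_i^0,z')^{2-N}$ and applies the ordinary Schur test, rather than dismissing this region as trivially bounded.
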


\begin{proof}[Proof of Lemma~\ref{lemma_difference_Lq}]
Indeed, let $D_i$ be the support of $\nabla \phi_i$ and $\sigma = \sup_{x\in D_i}d(x,z_i^0)$. Then if $d(z_i^0,z')\le 2 \sigma$, we use \eqref{eq_resolvent_upper} to bound \eqref{eq_difference} by 
\begin{equation}\label{eq_local}
    C \chi_{D_i}(z)\chi_{\{E_i:d(z_i^0,z')\le 2\sigma\}}(z') \left[d(z,z')^{1-N}+d(z_i^0,z')^{1-N}\right].
\end{equation}
It is plain that
\begin{align*}
     \sup_{z\in \mathcal{M}} \int \chi_{D_i}(z)\chi_{\{E_i:d(z_i^0,z')\le 2\sigma\}}(z')\left[d(z,z')^{1-N}+d(z_i^0,z')^{1-N}\right] dz'\\
    \lesssim \int_0^{3\sigma} s^{1-N+N-1} ds
    \lesssim 1.
\end{align*}
On the other hand, a similar estimate gives
\begin{gather*}
    \sup_{z'\in \mathcal{M}} \int \chi_{D_i}(z) \chi_{\{E_i:d(z_i^0,z')\le 2\sigma\}}(z') \left[d(z,z')^{1-N}+d(z_i^0,z')^{1-N}\right] dz\lesssim 1.
\end{gather*}
It then follows by the Schur test, the local part \eqref{eq_local} (i.e. the part where $d(z_i^0,z')\le 2\sigma$) is bounded on $L^q$ for $q\in [1,\infty]$. While for $d(z_i^0,z')\ge 2\sigma$, we use gradient estimate \eqref{eq_resolvent_derivative} to bound \eqref{eq_difference} by
\begin{gather}\label{eq_remote}
     C\chi_{D_i}(z) \chi_{E_i}(z')d(z^*,z')^{-n_i} \lesssim \chi_{D_i}(z) \chi_{E_i}(z')d(z_i^0,z')^{-n_i},
\end{gather}
where $z^*$ is some point seating on the line segment jointing $z$ and $z_i^0$. Now, it is plain that \eqref{eq_remote} acts boundedly on $L^q$ for all $q\in (1,\infty)$ by an $L^q(\mathcal{M},L^{q'}(\mathcal{M}))$ argument. The claim has been verified.   
\end{proof}

We may continue, for the same reason as before, it is enough to treat inner products
\begin{equation*}
    \left\langle \nabla f, \int_{E_i} \int_{0}^1 [\nabla \phi_i(z) + \nabla u_i(z,k)] (\Delta_i+k^2)^{-1}(z_i^0,z')\phi_i(z')dk g(z')dz' \right\rangle \quad \forall 1\le i\le l,
\end{equation*}
where we change the space $\mathcal{M}$ to $E_i$ since the support of $\phi_i$ is contained in the $i$th end entirely.

Then, we split
\begin{equation*}
    \nabla \phi_i(z) + \nabla u_i(z,k) = \nabla [\phi_i(z)+u_i(z,0)] + \nabla [u_i(z,k)-u_i(z,0)] := I+II.
\end{equation*}
Recall that $u_i(z,0) = \lim_{\epsilon \to 0}(\Delta+\epsilon^2)^{-1}v_i(z)$, where $v_i = -\Delta \phi_i$. And $\Phi_i(z) := \phi_i(z) + u_i(z,0)$ is the unique harmonic function which tends to 1 at the infinity of $E_i$ and to 0 at the infinity of all other ends. Set
\begin{equation*}
    \overline{C(g)} = \int_{E_i} \int_{0}^1 (\Delta_i+k^2)^{-1}(z_i^0,z')dk \phi_i(z') g(z')dz'.
\end{equation*}
We pick a smooth bump function $\eta_r$ such that $\eta_r=1$ on some ball $B_r$ with radius $r$ and vanishes outside $B_{2r}$. Moreover, it has property that $\||\nabla \eta_r|\|_\infty \le Cr^{-1}$. It follows by integration by parts (recall $f\in \mathcal{C}_c^\infty(\mathcal{M})$) and dominated convergence theorem, the inner product corresponding to $I$ equals
\begin{gather*}
    C(g) \lim_{r\to \infty} \langle \nabla f, \eta_r \nabla \Phi_i \rangle = C(g) \lim_{r\to \infty} \langle f, \eta_r \textrm{div} (\nabla \Phi_i) \rangle + C(g) \lim_{r\to \infty} \langle f, \nabla \eta_r \cdot \nabla \Phi_i \rangle,
\end{gather*}
where "$\textrm{div}$" is the Riemannian divergence operator. Note that the second term on the RHS is bounded by $C(g)\lim_{r\to \infty}Cr^{-1}\|f\|_1\||\nabla \Phi_i|\|_\infty$ which tends to zero by Lemma \ref{leKey} (i.e. $\nabla \Phi_i$ is uniformly bounded). Hence, we conclude that
\begin{equation}\label{eq_ha}
    C(g) \langle \nabla f, \nabla \Phi_i \rangle = C(g) \langle f, \Delta \Phi_i \rangle = 0 \quad \forall 1\le i\le l.
\end{equation}
Consequently, we only need to consider the following bilinear form ($1\le i\le l$)
\begin{equation}
    \mathcal{I}_i(f,g):= \int_{\mathcal{M}}\nabla f(z) \cdot \int_{E_i} \int_{0}^1 \nabla [u_i(z,k)-u_i(z,0)] (\Delta_i+k^2)^{-1}(z_i^0,z')\phi_i(z')dk g(z')dz' dz.
\end{equation}
Define
\begin{align}\label{T_i}
    \mathcal{T}_ig(z) &= \int_{E_i} \int_{0}^1 \nabla [u_i(z,k)-u_i(z,0)] (\Delta_i+k^2)^{-1}(z_i^0,z')\phi_i(z')dk g(z')dz'\\ \nonumber
    &=\int_0^1 \nabla [u_i(z,k)-u_i(z,0)] \int_{E_i} (\Delta_i+k^2)^{-1}(z_i^0,z')\phi_i(z') g(z')dz' dk. 
\end{align}
Note that, for each $0\le k\le 1$, we have by Lemma \ref{leKey} that the following uniform estimate holds
\begin{equation}\label{eq_uni}
    \| |\nabla [u_i(\cdot,k)-u_i(\cdot,0)]| \|_{L^\infty(\mathcal{M})}\le Ck.
\end{equation}
And meanwhile, a pointwise estimate: for all $z\in E_j$ and $1\le j\le l$
\begin{align}\label{eq_point}
    |\nabla [u_i(z,k)-u_i(z,0)]|&\le |\nabla u_i(z,k)|+ |\nabla u_i(z,0)|\lesssim d(z_j^0,z)^{1-n_j}.
\end{align}
By \eqref{eq_resolvent_upper} and Hölder's inequality, a straightforward estimate gives
\begin{align*}
    \int_{E_i} (\Delta_i+k^2)^{-1}(z_i^0,z')\phi_i(z') g(z')dz' dk &\le \|g\|_{p'} \left(\int_{E_i}\left|(\Delta_i+k^2)^{-1}(z_i^0,z')\phi_i(z')\right|^p dz' \right)^{1/p}\\
    &\lesssim \|g\|_{p'} \left(\int_1^\infty e^{-ckr} r^{n_i-p(n_i-2)-1}dr \right)^{1/p}   \\
    &\lesssim \|g\|_{p'} \begin{cases}
        k^{\frac{n_i}{p'}-2}, & p'>n_i/2\\
        k^{-1/p}, & p'\le n_i/2
    \end{cases}\quad \\
    &:= \|g\|_{p'}F(k).
\end{align*}
There are three situations,
\begin{align*}
        \mathcal{S}_1 = \{i: n_i > p'\}, \quad \mathcal{S}_2 = \{i: n_i = p'\}, \quad
    \mathcal{S}_3 = \{i: n_i < p'\}.
\end{align*}

\begin{lemma}\label{lemma_T_i_pointwise_estimates}
Under the assumptions of Theorem~\ref{thm_MainResult}, we have for $1\le j\le l$,
\begin{align*}
    | \mathcal{T}_ig(z)|\lesssim \|g\|_{p'} \begin{cases}
        d(z_j^0,z)^{1-n_j}, & i\in \mathcal{S}_1, z\in E_j,\\
        d(z_j^0,z)^{1-n_j} (1+|\log{d(z_j^0,z)}|), & i\in \mathcal{S}_2, z\in E_j,\\
        d(z_j^0,z)^{(1-n_j)\frac{n_i}{p'}}, & i\in \mathcal{S}_3, z\in E_j,\\
        1, & 1\le i\le l, z\in K.
    \end{cases}
\end{align*}
\end{lemma}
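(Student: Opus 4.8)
The plan is to estimate $\mathcal{T}_ig(z)$ directly from its integral representation \eqref{T_i}, splitting the $k$-integral at the scale $k \sim d(z_j^0,z)^{-1}$ and using the two complementary bounds on $|\nabla[u_i(z,k)-u_i(z,0)]|$ we already have: the uniform small-$k$ bound \eqref{eq_uni}, which gives a factor $\lesssim k$, and the crude pointwise bound \eqref{eq_point}, which gives $\lesssim d(z_j^0,z)^{1-n_j}$ with no $k$-decay. The remaining factor in the integrand, $\int_{E_i}(\Delta_i+k^2)^{-1}(z_i^0,z')\phi_i(z')g(z')\,dz'$, has already been bounded by $\|g\|_{p'}F(k)$, where $F(k) = k^{n_i/p'-2}$ when $n_i > p'$ and $F(k) = k^{-1/p}$ when $n_i \le p'$ (noting $n_i > p'$ iff $p' < n_i$, so $\mathcal{S}_1$ corresponds to the first case and $\mathcal{S}_2 \cup \mathcal{S}_3$ to the second — one must be slightly careful, since $p' = n_i$ is the borderline of the Hölder computation and should be checked to land in the $k^{-1/p}$ regime as well, possibly up to a logarithmic factor which will be absorbed into the $\mathcal{S}_2$ estimate). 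So in every case
\begin{equation*}
    |\mathcal{T}_ig(z)| \lesssim \|g\|_{p'}\left(\int_0^{R^{-1}} k\,F(k)\,dk + R^{1-n_j}\int_{R^{-1}}^1 F(k)\,dk\right), \qquad R := d(z_j^0,z),
\end{equation*}
for $z \in E_j$, and the whole matter reduces to evaluating these two elementary one-variable integrals in each of the three cases, plus the trivial bound on $K$.

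The case analysis would go as follows. For $i \in \mathcal{S}_1$ we have $F(k) = k^{n_i/p'-2}$; since $n_i > p' \geq 1$ one checks $n_i/p' - 1 > 0$, so $\int_0^{R^{-1}} k^{n_i/p'-1}\,dk \sim R^{-n_i/p'} = R^{1-n_i/p'}R^{-1}$, and for $R$ large (which is the only regime that matters, since $z \in E_j$ and we normalized $d(z_j^0,\cdot)\ge 1$) this is dominated by $R^{1-n_j}$ provided $n_i/p' \geq n_j$... which need not hold, so one must instead simply observe that $R^{-n_i/p'} \lesssim R^{1-n_j}$ fails in general and the correct bound is the stated $R^{1-n_j}$ coming from the \emph{second} integral: $\int_{R^{-1}}^1 k^{n_i/p'-2}\,dk \lesssim 1$ (the integrand is integrable near $0$ when $n_i/p' - 2 > -1$, i.e.\ $n_i > p'$, and the integral over $(0,1)$ converges), so the second term contributes $R^{1-n_j}$, and one checks the first term is of the same order or smaller; combining gives the $\mathcal{S}_1$ bound. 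For $i \in \mathcal{S}_2$, $F(k) = k^{-1/p}$ and $\int_{R^{-1}}^1 k^{-1/p}\,dk$ is $O(1)$ while $\int_0^{R^{-1}} k^{1-1/p}\,dk \sim R^{-(2-1/p)} = R^{1/p - 2}$; the borderline $n_i = p'$ is precisely where the Hölder estimate for the $z'$-integral produces an extra $|\log k|$ (from $r^{n_i - p(n_i-2)-1} = r^{-1}$ in the displayed computation), and carrying that logarithm through the $k$-integral yields the $(1 + |\log R|)$ factor. For $i \in \mathcal{S}_3$, again $F(k) = k^{-1/p}$, but now the competition between $\int_0^{R^{-1}} k^{1-1/p}\,dk \sim R^{1/p-2}$ and $R^{1-n_j}\int_{R^{-1}}^1 k^{-1/p}\,dk \sim R^{1-n_j}$ is resolved by optimizing the split point — here one should \emph{not} split at $R^{-1}$ but rather at the $k_0$ where the two contributions balance, $k_0 \cdot R^{1-n_j} \sim$ (tail), giving the exponent $(1-n_j)\tfrac{n_i}{p'}$ after a short computation; this is the one place where the naive dyadic split at $R^{-1}$ is not optimal.

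I would therefore organize the proof as: (1) record the two pointwise bounds \eqref{eq_uni} and \eqref{eq_point} on the gradient factor; (2) insert the $\|g\|_{p'}F(k)$ bound for the $z'$-integral (with the logarithmic refinement at $n_i = p'$ noted explicitly); (3) for a general split point $k_0 \in (0,1)$, bound $|\mathcal{T}_ig(z)|$ by $\|g\|_{p'}\big(\int_0^{k_0} kF(k)\,dk + R^{1-n_j}\int_{k_0}^1 F(k)\,dk\big)$; (4) in cases $\mathcal{S}_1, \mathcal{S}_2$ take $k_0 = R^{-1}$ (or $k_0 = 1$ when the first integral already dominates) and compute; (5) in case $\mathcal{S}_3$ choose $k_0$ to equalize the two terms and compute; (6) handle $z \in K$ by the trivial bound $|\nabla[u_i(z,k)-u_i(z,0)]| \lesssim k \lesssim 1$ together with $\int_0^1 F(k)\,dk < \infty$ in each case. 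The main obstacle is the bookkeeping in step (5) — getting the mixed exponent $(1-n_j)\tfrac{n_i}{p'}$ right requires choosing the split point correctly and carefully matching the powers of $R$ from the two pieces; everything else is routine one-dimensional integration, and the only subtlety elsewhere is making sure the $p' = n_i$ borderline of the Hölder step is tracked so that the logarithm in the $\mathcal{S}_2$ estimate is accounted for rather than lost.
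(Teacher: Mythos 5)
Your overall plan is the same as the paper's: bound the gradient factor by the uniform estimate \eqref{eq_uni} (giving a factor $k$) on a small interval $(0,\delta)$ and by the pointwise estimate \eqref{eq_point} (giving $d(z_j^0,z)^{1-n_j}$, no $k$-decay) on $(\delta,1)$, bound the $z'$-integral by $\|g\|_{p'}F(k)$, and then optimize the split $\delta$. This is correct in structure. However, you have misread where the two formulas for $F(k)$ apply, and this propagates into genuine errors in cases $\mathcal{S}_2$ and $\mathcal{S}_3$.

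Concretely, the dichotomy in the displayed Hölder computation is $p'>n_i/2$ versus $p'\le n_i/2$, \emph{not} $n_i>p'$ versus $n_i\le p'$. Since $p'\ge n_*\ge 3$, one always has $p'>n_i/2$ whenever $n_i\le p'$, i.e.\ for every $i\in\mathcal{S}_2\cup\mathcal{S}_3$ the relevant branch is $F(k)=k^{n_i/p'-2}$ and not $k^{-1/p}$. In particular for $i\in\mathcal{S}_2$ one has $F(k)=k^{-1}$, and the logarithm in the stated bound comes from $\int_\delta^1 k^{-1}\,dk=\log\delta^{-1}$ in the $k$-integration after the split, \emph{not} from a $\log k$ produced by an $r^{-1}$ integrand in the Hölder step (that borderline is $p'=n_i/2$, a different locus; at $p'=n_i$ the exponent $n_i-p(n_i-2)-1=1/(n_i-1)>0$, so there is no $r^{-1}$ there). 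Your $\mathcal{S}_2$ account therefore locates the logarithm in the wrong place. In $\mathcal{S}_3$, with your (incorrect) $F(k)=k^{-1/p}$ the tail integral $\int_\delta^1 k^{-1/p}\,dk$ is uniformly bounded because $1/p<1$, and no choice of split point can produce the exponent $(1-n_j)n_i/p'$; with the correct $F(k)=k^{n_i/p'-2}$ (where now $n_i/p'-2<-1$) the balancing does work and, as in the paper, the optimal split is again $\delta\sim d(z_j^0,z)^{1-n_j}$, which is also the split the paper uses in $\mathcal{S}_2$; it is not $R^{-1}$, and there is no need to change the split between cases. Finally, for $\mathcal{S}_1$ no split is needed at all since $\int_0^1 F(k)\,dk<\infty$ there (in both $F$-branches), which you recognize mid-paragraph after initially heading for the $R^{-1}$ split. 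Once the identification of $F(k)$ is corrected and $\delta\sim d(z_j^0,z)^{1-n_j}$ is used throughout (trivially, $\delta$ plays no role in $\mathcal{S}_1$), your plan reproduces the paper's proof exactly, including the $z\in K$ estimate using $kF(k)\in L^1(0,1)$.
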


\begin{proof}[Proof of Lemma~\ref{lemma_T_i_pointwise_estimates}]
\textbf{Case 1.} If $i\in \mathcal{S}_1$, we use pointwise estimate \eqref{eq_point} to get
\begin{align}
    \label{eq_S_1}
    |\mathcal{T}_ig(z)|&\lesssim \|g\|_{p'} \int_0^1 |\nabla\left(u_i(z,k)-u_i(z,0)\right)| |F(k)| dk \\ \nonumber
    &\lesssim  d(z_j^0,z)^{1-n_j} \|g\|_{p'} \int_0^1|F(k)|dk \lesssim d(z_j^0,z)^{1-n_j} \|g\|_{p'} \quad \forall z\in E_j
\end{align}
since $F(k)$ is integrable near 0 when $i\in S_1$.

\textbf{Case 2.} If $i\in \mathcal{S}_2$ i.e. $n_i=p'$. In this situation, we pick some $0<\delta<1$ to be determined later and use uniform estimate \eqref{eq_uni} for interval $(0,\delta)$ and pointwise estimate \eqref{eq_point} for $(\delta,1)$ to obtain for all $z\in E_j$,
\begin{align*}
    | \mathcal{T}_ig(z)|&\lesssim   \|g\|_{p'} \left(\int_0^\delta k^{\frac{n_i}{p'}-1}dk + d(z_j^0,z)^{1-n_j} \int_\delta^1 k^{\frac{n_i}{p'}-2}dk\right)\\
    &\simeq \|g\|_{p'} \left(\int_0^\delta dk + d(z_j^0,z)^{1-n_j} \int_\delta^1 k^{-1}dk\right)\\
    &\lesssim \|g\|_{p'} (\delta + d(z_j^0,z)^{1-n_j} |\log{\delta^{-1}}|).
\end{align*}
By picking $\delta = c d(z_j^0,z)^{1-n_j}$, where $c>0$ is to make sure that $\delta < 1$, we get
\begin{equation}\label{eq_S_2}
    | \mathcal{T}_ig(z)|\lesssim \|g\|_{p'} d(z_j^0,z)^{1-n_j} (1+|\log{d(z_j^0,z)}|) \quad \forall z\in E_j.
\end{equation}

\textbf{Case 3.} If $i\in \mathcal{S}_3$, we use a similar estimates as above to get
\begin{align*}
    | \mathcal{T}_ig(z)|&\lesssim  \|g\|_{p'} \left(\int_0^\delta k^{\frac{n_i}{p'}-1}dk + d(z_j^0,z)^{1-n_j} \int_\delta^1 k^{\frac{n_i}{p'}-2}dk\right)\\
    &\lesssim \|g\|_{p'} (\delta^{\frac{n_i}{p'}} + d(z_j^0,z)^{1-n_j} \delta^{\frac{n_i}{p'}-1}).
\end{align*}
Again, by choosing $\delta = c d(z_j^0,z)^{1-n_j}$, we have
\begin{equation}\label{eq_S_3}
    | \mathcal{T}_ig(z)|\lesssim \|g\|_{p'} d(z_j^0,z)^{(1-n_j)\frac{n_i}{p'}} \quad \forall z\in E_j.
\end{equation}

Finally, for $z\in K$, we simply use the uniform estimate \eqref{eq_uni} for all $1\le i\le l$, 
\begin{equation}\label{eq_S_K}
    | \mathcal{T}_ig(z)| \lesssim  \|g\|_{p'} \int_0^1 kF(k)dk \lesssim \|g\|_{p'} \quad \forall z\in K
\end{equation}
since $kF(k)$ is integrable near 0 for all cases.    
\end{proof}

Now, the remaining of the proof is standard. By Lemma~\ref{lemma_T_i_pointwise_estimates}, for $i\in \mathcal{S}_1$, \eqref{eq_S_1} and \eqref{eq_S_K} imply
\begin{align*}
    \| \mathcal{T}_ig\|_{p'}^{p'} &\lesssim  \int_K \|g\|_{p'}^{p'} dz + C\|g\|_{p'}^{p'} \sum_{j=1}^l \int_{E_j} d(z_j^0,z)^{p'(1-n_j)}dz\\
    &\lesssim  \|g\|_{p'}^{p'} \left(1+ \sum_{j=1}^l \int_1^\infty r^{p'(1-n_j)+n_j-1}dr \right) \lesssim \|g\|_{p'}^{p'}
\end{align*}
since $1<p\le n_*'$ which implies $p' \ge n_*$ and hence $p'>n_j'$ for all $1\le j\le l$. Next, for $i\in \mathcal{S}_2$, we have by \eqref{eq_S_2} and \eqref{eq_S_K}
\begin{align*}
    \| \mathcal{T}_ig\|_{p'}^{p'} &\lesssim  \|g\|_{p'}^{p'} +  \|g\|_{p'}^{p'} \sum_{j=1}^l \int_{E_j} d(z_j^0,z)^{p'(1-n_j)} (1+|\log{d(z_j^0,z)}|)^{p'}    dz\\
    &\lesssim  \|g\|_{p'}^{p'} \left(1+ \sum_{j=1}^l \int_1^\infty r^{p'(1-n_j)+n_j-1}(1+\log{r})^{p'}dr \right)\\
    &\lesssim  \|g\|_{p'}^{p'} + \|g\|_{p'}^{p'} \sum_{j=1}^l \int_0^\infty e^{s[p'(1-n_j)+n_j]} (1+s)^{p'} ds \lesssim  \|g\|_{p'}^{p'}
\end{align*}
since $1<p\le n_*'$ which implies $p' \ge n_*$ and hence $p'>n_j'$ for all $1\le j\le l$. Finally, for $i\in \mathcal{S}_3$, by \eqref{eq_S_3} and \eqref{eq_S_K}
\begin{align*}
    \| \mathcal{T}_ig\|_{p'}^{p'} &\lesssim  \|g\|_{p'}^{p'} +  \|g\|_{p'}^{p'} \sum_{j=1}^l \int_{E_j} d(z_j^0,z)^{(1-n_j)\frac{n_i}{p'} p'}    dz\\
    &\lesssim \|g\|_{p'}^{p'} + \|g\|_{p'}^{p'} \sum_{j=1}^l \int_{1}^\infty r^{(1-n_j)n_i+n_j-1}dr \lesssim  \|g\|_{p'}^{p'}
\end{align*}
since $n_j' < n_i$ for all $1\le i,j\le l$. As a consequence, one confirms that for all $1\le i\le l$,
\begin{equation*}
    \| \mathcal{T}_ig\|_{p'} \lesssim  \|g\|_{p'}.
\end{equation*}
As a consequence,
\begin{align*}
    \sum_{i=1}^l \mathcal{I}_i(f,g) = \sum_{i=1}^l \left\langle \nabla f,  \mathcal{T}_ig \right\rangle \le \sum_{i=1}^l \||\nabla f|\|_p \| \mathcal{T}_ig\|_{p'} \lesssim  \||\nabla f|\|_{p} \|g\|_{p'}.
\end{align*}
Now, we are allowed to assert that for all $f,g\in \mathcal{C}_c^\infty(\mathcal{M})$,
\begin{equation*}
    |\langle \Delta^{1/2}f, g\rangle| \le C \||\nabla f|\|_p \|g\|_{p'} \quad \forall 1<p\le n_*'.
\end{equation*}
By ranging over all functions $g\in \mathcal{C}_c^\infty(\mathcal{M})$ with $\|g\|_{p'}\le 1$, one concludes by duality that
\begin{equation*}
    \|\Delta^{1/2}f\|_p \le  C\||\nabla f|\|_p \quad \forall 1<p\le n_*'
\end{equation*}
which completes the proof of Theorem \ref{thm_MainResult}.

\end{proof}

\begin{remark}
Here we introduce an alternative proof for Theorem \ref{thm_MainResult} but under an additional assumption that $n_*>4$. We consider the difference term in the following way:
\begin{gather*}
\nabla [u_i(z,k)-u_i(z,0)] = \nabla\left[(\Delta+k^2)^{-1}-(\Delta+0^2)^{-1}\right]v_i(z) \\
= - \int_0^{k^2} \nabla(\Delta+s)^{-2}v_i(z) ds = -2 \int_0^k \nabla(\Delta+\lambda^2)^{-2}v_i(z) \lambda d\lambda.
\end{gather*}
Before moving on, we need to check the definition of the higher order resolvent term. By \cite[Proposition 3.2]{BS}, comparing to Lemma \ref{leKey}, we know that if $v_i\in C_c^\infty$, we have for $0\le m<n_*/2-1$, $0\le \lambda \le 1$, the higher order resolvent $(\Delta+\lambda^2)^{-1-m}v_i$ is well-defined for all $\lambda \in [0,1]$ and for all $z\in E_j$,
\begin{gather*}
    |\nabla (\Delta+\lambda^2)^{-1-m}v_i(z)| \lesssim \|v_i\|_\infty d(z_j^0,z)^{2m+1-n_j} \textrm{exp}\left(-c\lambda d(z_j^0, z)\right).
\end{gather*}
In our case, take $m=1$ (then we need $n_*>4$) we get for all $z\in E_j$,
\begin{align*}
    |\nabla (u_i(z,k)-u_i(z,0))| &\lesssim \int_0^k d(z_j^0,z)^{3-n_j} \textrm{exp}\left(-c\lambda d(z_j^0, z)\right) \lambda d\lambda \lesssim  k d(z_j^0,z)^{2-n_j}.
\end{align*}
Now, for all $z\in E_j$,
\begin{gather*}
    |\mathcal{T}_ig(z)| \lesssim  d(z_j^0,z)^{2-n_j} \|g\|_{p'} \int_0^1 k|F(k)|dk \lesssim   d(z_j^0,z)^{2-n_j} \|g\|_{p'}
\end{gather*}
and hence, $\|\mathcal{T}_ig\|_{p'} \lesssim \|g\|_{p'}$ as long as $p<n_j/2$ for all $1\le j\le l$ i.e. $p<n_*/2$. Recall that since $n_*>4$, so we have $1<p\le n_*' < 2 < n_*/2 \le n_j/2$. We therefore complete the proof under the further condition $n_*>4$. This method simplifies the calculation in the proof of Theorem \ref{thm_MainResult} but as a compensation, the result does not include the case when $n_*=3,4$.  
\end{remark}

\begin{remark}\label{remark}
It is also worth discussing some "drawback" of this "harmonic annihilation" method. We rely on three critical estimates. One is the convergence of the limit (in distribution sense at least)
\begin{equation}\label{eq_green}
    \lim_{k\to 0}(\Delta+k^2)^{-1}(z,z'), \quad z'\in K.
\end{equation}
The second one is the uniform estimate \eqref{eq_uni} which gives an extra decay of "$k$" near zero. Finally, the basic assumption: $f\in \mathcal{C}_c^\infty(\mathcal{M})$. For the last one, one checks that the key step in the proof, \eqref{eq_ha}, fails to hold if $f$ does not vanish at infinity since the harmonic function $\Phi_i$ tends to 1 at the infinity of $E_i$. That is, by our proof, \eqref{eq_RRp} does not extend trivially to $f\in \Dot{W}^{1,p}$ as in \cite{AC}, where \eqref{Doubling} and $(\textrm{P}_2)$ are assumed.

Let us consider a critical case studied in \cite{HNS} (the formal proof of this case, i.e. Theorem~\ref{thm_RR_critical} will be postponed to Section~\ref{section_6})
\begin{equation*}
    \mathcal{M}=(\mathbb{R}^2\times \mathcal{M}_-)\#(\mathbb{R}^{n_+}\times \mathcal{M}_+), \quad n_+\ge 3,
\end{equation*}
where $\mathcal{M}_{-}, \mathcal{M}_+$ are some compact manifolds. It was showed in \cite{HNS} that $\nabla \Delta^{-1/2}$ on this manifold is bounded on $L^p$ if and only if $1<p\le 2$. It then follows by duality, \eqref{eq_RRp} holds immediately for $p\ge 2$. While for $1<p<2$, the same idea giving in the proof of Theorem~\ref{thm_MainResult} does not work directly. Indeed, to estimate operator $T_i$, defined by \eqref{T_i}, we need both some decay on "$k$" (near 0) and "$d(z_i^0,z)$" (at infinity of each end $E_i$). Especially, the additional decay of "$k$" plays a crucial role in the convergence of the integral. However, in this critical case (some ends have "dimension" 2), one of the most significant differences is that the Green's function does not coincide with the limit \eqref{eq_green}. In fact, there is a logarithmic increase, $\log{k^{-1}}$, for \eqref{eq_green} when $k\to 0$ (also, for this reason, the parametrix construction in \cite{HNS} and \cite{HS} are technically different). To handle this problem, authors in \cite{HNS} introduced a factor $\textrm{ilg}(k)$ which annihilates the "log" term as $k\to 0$. Formally,
\begin{equation*}
    \textrm{ilg}(k) = \begin{cases}
        \frac{-1}{\log{k}}, & 0<k\le 1/2,\\
        0, & k=0,
    \end{cases}
\end{equation*}
which approaches to 0 slower than any polynomial with positive power. This manipulation guarantees the convergence of the limit in \eqref{eq_green} (\cite[Lemma 2.14]{HNS}) but as a compensation, the uniform estimate \eqref{eq_uni} becomes
\begin{equation*}
    \|\nabla [u(\cdot,k)-u(\cdot,0)]\|_\infty \lesssim \textrm{ilg}(k) + O(k).
\end{equation*}
Now, if we apply the same strategy as in the proof of Theorem~\ref{thm_MainResult}, one ends up with estimate: for $z\in K$,
\begin{equation*}
    |\mathcal{T}_i(g)(z)|\lesssim \|g\|_{p'} \int_0^{k_0} \textrm{ilg}(k) k^{-2/p} dk \quad 1<p<2,
\end{equation*}
where $k_0\in (0,1/2)$ is determined by the parametrix construction, see \cite[Section 3]{HNS}. Unfortunately, by the slow decay of $\textrm{ilg}(k)$, the above integral diverges.
\end{remark}

We end this section by giving some direct consequences of Theorem \ref{thm_MainResult}.
\begin{corollary}
Let $\mathcal{M}$ be defined by \eqref{eq_Manifold}, then \eqref{E_p} holds if and only if $1<p<n_*$.
\end{corollary}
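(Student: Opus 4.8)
The plan is to read off the corollary from two facts already in hand. By definition \eqref{E_p} is the conjunction of the Riesz inequality \eqref{R_p} (its right half, $\||\nabla f|\|_p\le C\|\Delta^{1/2}f\|_p$) and the reverse Riesz inequality \eqref{eq_RRp} (its left half, $\|\Delta^{1/2}f\|_p\le C\||\nabla f|\|_p$). Theorem~\ref{thm_MainResult} supplies \eqref{eq_RRp} for the entire range $1<p<\infty$, with no restriction. On the other hand, the sharp result of Hassell and Sikora \cite[Theorem 1.2]{HS} states that \eqref{R_p} holds on $\mathcal{M}$ if and only if $p\in(1,n_*)$. Combining these, \eqref{E_p} holds exactly on the intersection of the two ranges, which is $(1,n_*)$.

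Concretely, for the ``if'' direction I would fix $p\in(1,n_*)$: then $\||\nabla f|\|_p\le C\|\Delta^{1/2}f\|_p$ by \cite[Theorem 1.2]{HS} and $\|\Delta^{1/2}f\|_p\le C\||\nabla f|\|_p$ by Theorem~\ref{thm_MainResult}, and putting these together is precisely \eqref{E_p}. For the ``only if'' direction I would argue by contraposition: if \eqref{E_p} holds for some $p\in(1,\infty)$, then in particular its right half, i.e. \eqref{R_p}, holds, and the sharpness half of \cite[Theorem 1.2]{HS} (equivalently the counterexamples in \cite{CCH,HS,H}) forces $p<n_*$.

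I do not expect a genuine obstacle here; the corollary is a direct consequence. The only point that deserves a word of care is that the ``only if'' direction is not self-contained: it rests on the known optimality of the $L^p$-range for $\nabla\Delta^{-1/2}$ on $\mathcal{M}$, namely that \eqref{R_p} genuinely fails for $p\ge n_*$, which is part of \cite[Theorem 1.2]{HS}. It is worth closing the proof with the observation that this corollary is exactly the quantitative form of the phenomenon advertised in the abstract: on $\mathcal{M}$ the equivalence \eqref{E_p} breaks down for $p\ge n_*$ \emph{solely} because of the Riesz transform, the reverse Riesz inequality remaining valid throughout $(1,\infty)$.
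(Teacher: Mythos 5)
Your proposal is correct and is essentially the paper's own argument: the paper's proof of this corollary is the one-line remark that it ``follows by combining the results of \cite[Theorem 1.2]{HS} and Theorem~\ref{thm_MainResult},'' which is exactly the intersection-of-ranges reasoning you spell out. The extra detail you give on the ``only if'' direction (citing the sharpness in \cite{HS}) is a sound and helpful elaboration, not a deviation.
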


\begin{proof}
It follows by combining the results of \cite[Theorem 1.2]{HS} and Theorem \ref{thm_MainResult}.
\end{proof}

\begin{corollary}\label{cor_hodge}
Let $\mathcal{M}$ be defined by \eqref{eq_Manifold}, then the Hodge projector $d_{\mathcal{M}}\Delta^{-1}d_{\mathcal{M}}^*$ is bounded on $L^p$ if and only if $n_*'<p< n_*$, where $d_{\mathcal{M}}$ is the exterior derivative and $d_{\mathcal{M}}^*$ its formal adjoint. 
\end{corollary}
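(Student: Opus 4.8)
The plan is to factor the Hodge projector into a Riesz transform and its adjoint, and then extract both halves of the statement from \cite[Theorem~1.2]{HS} together with Theorem~\ref{thm_MainResult}. Identifying $1$-forms with vector fields via the metric, one has $d_{\mathcal{M}}\Delta^{-1}d_{\mathcal{M}}^* = \bigl(\nabla\Delta^{-1/2}\bigr)\bigl(\nabla\Delta^{-1/2}\bigr)^*$, because $d_{\mathcal{M}}\Delta^{-1/2}=\nabla\Delta^{-1/2}$ has formal adjoint $\Delta^{-1/2}d_{\mathcal{M}}^*$ and $d_{\mathcal{M}}^*d_{\mathcal{M}}=\Delta$ on functions; in particular $d_{\mathcal{M}}\Delta^{-1}d_{\mathcal{M}}^*$ is a formally self-adjoint idempotent, projecting a $1$-form onto its exact part. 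For the sufficiency, fix $p\in(n_*',n_*)$: since $p<n_*$, \cite[Theorem~1.2]{HS} gives boundedness of $\nabla\Delta^{-1/2}$ on $L^p$, and since $p>n_*'$, i.e.\ $p'<n_*$, the same theorem gives boundedness of $\nabla\Delta^{-1/2}$ on $L^{p'}$, hence of its adjoint $\Delta^{-1/2}d_{\mathcal{M}}^*$ on $L^p$ by duality. Composing the two bounded maps shows that $d_{\mathcal{M}}\Delta^{-1}d_{\mathcal{M}}^*$ is bounded on $L^p$.

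For the necessity I would use self-adjointness to reduce to a one-sided claim: if the operator is bounded on $L^p$ then it is bounded on $L^{p'}$ as well, so it is enough to prove that boundedness on any $L^q$ forces $q>n_*'$, and then apply this with $q=p$ and $q=p'$. To prove the one-sided claim, test the operator on a smooth compactly supported $1$-form $\omega$: then $d_{\mathcal{M}}\Delta^{-1}d_{\mathcal{M}}^*\omega=\nabla u$ with $u:=\Delta^{-1}\bigl(d_{\mathcal{M}}^*\omega\bigr)$, and $u$ is harmonic outside a compact set, tends to $0$ at the infinity of every end (the ends are non-parabolic, $n_i\ge3$), and on the smallest end $E_{i_0}$ (with $n_{i_0}=n_*$) has an expansion whose leading term is the $d(z_{i_0}^0,\cdot)^{2-n_*}$-profile of the Green function as seen from inside $E_{i_0}$. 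That profile is a positive multiple of the harmonic function $\Phi_{i_0}=\phi_{i_0}+u_{i_0}(\cdot,0)$ of item $(\romannumeral3)$ of Section~2, so the leading coefficient equals, up to a fixed positive constant, $\int_{\mathcal{M}}\langle\nabla\Phi_{i_0},\omega\rangle\,d\mu$. Since $\nabla\Phi_{i_0}\not\equiv0$ (by the maximum principle, as in the discussion of $\nabla a_{n-1}$ preceding the proof of Theorem~\ref{thm_MainResult}), one may choose $\omega$ supported near the junction $K$ with this pairing nonzero; then $u\simeq c\,d(z_{i_0}^0,z)^{2-n_*}$ with $c\neq0$ along $E_{i_0}$, the sub-leading harmonic terms add only $O\bigl(d(z_{i_0}^0,z)^{-n_*}\bigr)$ to $\nabla u$, and hence $\bigl|d_{\mathcal{M}}\Delta^{-1}d_{\mathcal{M}}^*\omega\bigr|=|\nabla u|\gtrsim d(z_{i_0}^0,z)^{1-n_*}$ on the far part of $E_{i_0}$. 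This tail lies in $L^q(E_{i_0})$ precisely when $(1-n_*)q+n_*<0$, i.e.\ $q>n_*'$; for $q\le n_*'$ it does not, contradicting $L^q$-boundedness of the operator applied to $\omega\in L^q$.

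The factorisation and the duality bookkeeping are routine; the step needing genuine care is the far-field analysis of $u=\Delta^{-1}\bigl(d_{\mathcal{M}}^*\omega\bigr)$ on the smallest end — that the leading coefficient of the Green function from $E_{i_0}$ is a nonzero multiple of $\Phi_{i_0}$, and that the sub-leading harmonic terms cannot cancel the lower bound $|\nabla u|\gtrsim d^{1-n_*}$. Both are consequences of the parametrix construction of Section~2: the Green function is the $k\to0$ limit of $G_1(k)+G_2(k)+G_3(k)+G_4(k)$, whose slowest-decaying term on $E_{i_0}$ carries precisely the factor $\Phi_{i_0}$ (cf.\ the expansion \eqref{eq_CCH} and item $(\romannumeral3)$), while separation of variables on $\mathbb{R}^{n_{i_0}}\times\mathcal{M}_{i_0}$ controls the remainder. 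An alternative for the half ``$p<n_*$'' is the identity $\bigl(\nabla\Delta^{-1/2}\bigr)^*=\bigl(\nabla\Delta^{-1/2}\bigr)^*\circ\bigl(d_{\mathcal{M}}\Delta^{-1}d_{\mathcal{M}}^*\bigr)$ — valid since the co-closed complement of a $1$-form is annihilated by $\Delta^{-1/2}d_{\mathcal{M}}^*$ — combined with the reverse Riesz inequality \eqref{eq_MainResult} applied on the exact part; but this requires approximating $u$ by compactly supported functions in $\dot{W}^{1,p}$, which again forces $p>n_*'$, so it does not circumvent the computation above.
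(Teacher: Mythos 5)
Your sufficiency argument is exactly the paper's: factor the projector as $\bigl(\nabla\Delta^{-1/2}\bigr)\bigl(\nabla\Delta^{-1/2}\bigr)^*$ and feed in \cite[Theorem~1.2]{HS} on both sides, using duality for the adjoint.

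For necessity you take a genuinely different route. The paper argues softly by contradiction: it invokes \cite[Lemma~0.1]{AC}, which says that $(\mathrm{RR}_{p'})$ together with $L^p$-boundedness of the Hodge projector forces $(\mathrm{R}_p)$; since Theorem~\ref{thm_MainResult} gives $(\mathrm{RR}_{p'})$ for every $p'$ while \cite[Theorem~1.2]{HS} says $(\mathrm{R}_p)$ fails for $p\ge n_*$, boundedness of the projector for $p\ge n_*$ is impossible, and the regime $p\le n_*'$ follows by self-adjointness. You instead try a direct construction: feed in a compactly supported $1$-form $\omega$ and show that $u=\Delta^{-1}(d^*\omega)$ has a nondegenerate $d(z_{i_0}^0,\cdot)^{2-n_*}$ tail on the smallest end, with coefficient proportional to $\int\langle\nabla\Phi_{i_0},\omega\rangle$, so that $|\nabla u|\gtrsim d^{1-n_*}$ far out and therefore fails to lie in $L^q$ when $q\le n_*'$. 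The flux computation you gesture at (the leading Green-function coefficient as seen from inside $E_{i_0}$ is a positive multiple of $\Phi_{i_0}(z')$) is morally right — it is precisely the $\Delta^{-1}$-analogue of \eqref{eq_CCH} together with a divergence-theorem balance — and the claim that subleading contributions to $\nabla u$ are $O(d^{1-n_{i_0}-\varepsilon})$ on $E_{i_0}$ does hold by separating variables on $\mathbb{R}^{n_{i_0}}\times\mathcal{M}_{i_0}$ (nonzero Fourier modes on $\mathcal{M}_{i_0}$ decay exponentially in $|x|$, and the zero mode has the standard Euclidean multipole expansion). What your proposal buys is an explicit counterexample function and a self-contained argument that does not rely on \cite[Lemma~0.1]{AC}; what it costs is that the far-field expansion of the Green kernel — which you flag yourself as "the step needing genuine care" — is asserted rather than derived from the parametrix pieces $G_1,\dots,G_4$ of Section~2, and making it precise (in particular extracting the leading coefficient as a nonzero multiple of $\Phi_{i_0}$ and confirming the remainder does not cancel the lower bound) is comparable in length to the whole of the paper's $L^q$ analysis, which is exactly the work the paper's soft argument is designed to avoid.
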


\begin{proof}
The range of boundedness follows directly by \cite[Theorem 1.2]{HS} and a duality argument since 
\begin{equation*}
    d_{\mathcal{M}}\Delta^{-1}d_{\mathcal{M}}^* = d_{\mathcal{M}}\Delta^{-1/2}(d_{\mathcal{M}}\Delta^{-1/2})^*.
\end{equation*}
Regarding the negative part, we let $p\ge n_*$. By \cite[Lemma 0.1]{AC}, we know that the validity of $(\textrm{RR}_{p'})$ and the boundedness of $d_{\mathcal{M}}\Delta^{-1}d_{\mathcal{M}}^*$ on $L^p$ implies \eqref{R_p}. However, by \cite[Theorem 1.2]{HS}, we know that \eqref{R_p} is false for all $p\ge n_*$ which implies the unboundedness of $d_{\mathcal{M}}\Delta^{-1}d_{\mathcal{M}}^*$ on $L^p$ since by Theorem~\ref{thm_MainResult} $(RR_{p'})$ holds for all $p'$. And the case $p\le n_*'$ follows by duality since Hodge projector is self-adjoint.
\end{proof}

\begin{corollary}
Let $\mathcal{M}=\mathbb{R}^n \# \mathbb{R}^n$, then the following version Sobolev inequality holds
\begin{equation*}
    \|f\|_q \lesssim \||\nabla f|\|_p \quad \forall f\in \mathcal{C}_c^\infty(\mathcal{M}),
\end{equation*}
where $1/p - 1/q = 1/n$ and $1<p<n$.
\end{corollary}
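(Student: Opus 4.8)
The plan is to deduce the Sobolev-type inequality $\|f\|_q \lesssim \||\nabla f|\|_p$ on $\mathcal{M} = \mathbb{R}^n \# \mathbb{R}^n$ (with $1/p - 1/q = 1/n$, $1<p<n$) by factoring through the operator $\Delta^{1/2}$ and combining two ingredients: the reverse Riesz inequality from Theorem~\ref{thm_MainResult}, and a standard Sobolev embedding for the semigroup $e^{-t\Delta}$, i.e. the mapping property $\Delta^{-1/2}\colon L^p \to L^q$. First I would write, for $f\in \mathcal{C}_c^\infty(\mathcal{M})$,
\begin{equation*}
    \|f\|_q = \|\Delta^{-1/2}\Delta^{1/2}f\|_q \le \|\Delta^{-1/2}\|_{L^p\to L^q}\,\|\Delta^{1/2}f\|_p \lesssim \|\Delta^{1/2}f\|_p,
\end{equation*}
where the last step uses that $\Delta^{-1/2}$ is bounded from $L^p$ to $L^q$ whenever $1/p-1/q=1/n$ and $1<p<q<\infty$. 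Then Theorem~\ref{thm_MainResult} gives $\|\Delta^{1/2}f\|_p \lesssim \||\nabla f|\|_p$ for this range of $p$ (indeed for all $1<p<\infty$), and chaining the two estimates yields the claim.

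The one point that genuinely needs justification is the boundedness $\Delta^{-1/2}\colon L^p(\mathcal{M})\to L^q(\mathcal{M})$. On $\mathbb{R}^n \# \mathbb{R}^n$ with $n\ge 3$, the heat kernel satisfies the global Gaussian-type on-diagonal bound $p_t(x,x)\lesssim t^{-n/2}$ for $t\le 1$ together with the correct large-time behaviour dictated by the volume growth $V(x,r)\simeq r^n$ at infinity; these are classical for this manifold (see \cite{CCH} and the references there). From the uniform estimate $\|e^{-t\Delta}\|_{L^1\to L^\infty}\lesssim t^{-n/2}$ one obtains, by writing $\Delta^{-1/2} = c\int_0^\infty e^{-t\Delta}\,t^{-1/2}\,\frac{dt}{t}$ (up to constants) and splitting the $t$-integral, the $L^p\to L^q$ bound for the stated exponents via the standard argument of Varopoulos (ultracontractivity implies Sobolev). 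Alternatively, since $\mathcal{M}=\mathbb{R}^n\#\mathbb{R}^n$ does satisfy \eqref{Doubling} (both ends have the same dimension $n$), one may invoke directly the equivalence between the on-diagonal heat kernel upper bound and the Nash/Sobolev inequality on doubling spaces.

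The step I expect to be essentially free is the last one: Theorem~\ref{thm_MainResult} applies verbatim here, since $\mathbb{R}^n\#\mathbb{R}^n$ is the case $l=2$, $n_1=n_2=n\ge 3$ of \eqref{eq_Manifold} with $\mathcal{M}_i$ a point. So there is no real obstacle in that direction; the main (and only) thing to be careful about is citing or sketching the correct heat-kernel input for the $L^p\to L^q$ bound on $\Delta^{-1/2}$, and making sure the endpoint constraints $p>1$ and $q<\infty$ (equivalently $p<n$) are respected so that we stay away from the weak-type endpoints. Once that is in place the proof is a two-line composition.
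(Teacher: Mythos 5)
Your proposal follows essentially the same route as the paper: ultracontractivity $\|e^{-t\Delta}\|_{1\to\infty}\lesssim t^{-n/2}$, Varopoulos' theorem to get $\Delta^{-1/2}\colon L^p\to L^q$, then the reverse Riesz inequality of Theorem~\ref{thm_MainResult}. The paper additionally cites a lemma of Russ to justify the composition $\|f\|_q\lesssim\|\Delta^{1/2}f\|_p$ rigorously (since $\Delta^{1/2}f$ need not a priori lie in $L^p$ for $f\in\mathcal{C}_c^\infty$), a point your ``two-line composition'' glides over, but the structure and ingredients are identical.
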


\begin{proof}
It is well-known that by \cite{GS},
\begin{equation*}
    \|e^{-t\Delta}\|_{1\to \infty} \lesssim  t^{-n/2}.
\end{equation*}
Then it follows by a result of Varopoulos \cite{Varo},  
\begin{equation*}
    \|\Delta^{-1/2}f\|_q \lesssim \|f\|_p \quad 1<p<n \quad 1/p-1/q=1/n.
\end{equation*}
Use Theorem \ref{thm_MainResult} and \cite[Lemma~1]{Russ}, one concludes that
\begin{equation*}
    \|f\|_q \lesssim \|\Delta^{1/2}f\|_p \lesssim \||\nabla f|\|_p.
\end{equation*}

\end{proof}

\begin{remark}
In fact, the case, where $p=1$, i.e. $\|f\|_{\frac{n}{n-1}}\lesssim \||\nabla f|\|_1$ also holds in this setting and it is related to the isoperimetric inequality $|\Omega|^{\frac{n-1}{n}}\lesssim |\partial \Omega|$ for all bounded $\Omega \subset \mathcal{M}$ with smooth boundary. This is studied in the author's PhD thesis.
\end{remark}

\section{Reverse Riesz on Broken Line}\label{Section_RR_Broken_Line}

In this section, we extend the results above to a one-dimensional model studied in \cite{HS2}. For some $d>1$, we consider measure space $(\Tilde{\mathbb{R}},d\mu)$, where $\Tilde{\mathbb{R}} = (-\infty,-1]\cup [1,\infty)$ with measure
\begin{equation}\label{eq_brokenline}
    d\mu(r) = |r|^{d-1}dr,
\end{equation}
where $dr$ is Lebesgue measure and the measure $d\mu$ mimics the radial part of Lebesgue measure in $\mathbb{R}^d$ (when $d$ is an integer). We identify $-1$ and $1$ in the following sense. We say $f\in \mathcal{C}^1(\Tilde{\mathbb{R}})$ if $f(-1)=f(1)$, $f'(-1)=f'(1)$ and it is continuously differentiable. Similarly, we define $\mathcal{C}_c^\infty(\Tilde{\mathbb{R}})$ to be the space of smooth functions with compact support and satisfy equation $f^{(k)}(-1) = f^{(k)}(1)$ for all $k\in \mathbb{N}$. Also, denote by $S_0(\Tilde{\mathbb{R}})$, a subspace of $\mathcal{C}_c^\infty(\Tilde{\mathbb{R}})$ with an additional condition $f(\pm 1)=0$. Apparently, $S_0$ is dense in $L^p(\Tilde{\mathbb{R}})$ for all $p\in [1,\infty)$.

For any $f,g\in \mathcal{C}^1(\Tilde{\mathbb{R}})$, consider quadratic form
\begin{equation*}
    Q(f,g) = \int_{\Tilde{\mathbb{R}}} f'(r) g'(r) d\mu(r).
\end{equation*}
Followed by Friedrichs extension, there exists a unique self-adjoint operator associated to this form
\begin{equation*}
    \Delta_d f = - \frac{d^2}{dr^2}f - \frac{d-1}{|r|} \frac{d}{dr}f \quad f\in \mathcal{C}^1(\Tilde{\mathbb{R}}),
\end{equation*}
and then the Riesz transform (See \cite{HS2})
\begin{equation}\label{eq_formula}
    \nabla \Delta_d^{-1/2} = \nabla \int_0^\infty (\Delta_d+\lambda^2)^{-1}d\lambda,
\end{equation}
where $\nabla$ is the usual derivative i.e. $\nabla f = f'$.

This model usually plays a role in predicting the behaviors of the Riesz transform in higher dimensional cases. Note that when $d\ge 3$ (integer), the operator $\Delta_d$ models the spherical symmetric part of the Laplace-Beltrami operator on $\mathbb{R}^d \# \mathbb{R}^d$ introduced previously. In fact, if we allow the "dimensions" on each end to be different (discussions about such modified one-dimensional model can be found in \cite{N}), by this we mean the measure changes to $|r|^{d_1-1}dr$ for $r\le -1$ and $r^{d_2-1}dr$ for $r\ge 1$ and $d_1\ne d_2$, then the model will grab the behaviour of the radial part of Laplacian on $\mathcal{M}$ given by \eqref{eq_Manifold} with $l=2$. One also can replace the space $(\Tilde{\mathbb{R}},d\mu)$ to $([1,\infty),r^{d-1}dr)$ and consider Dirichlet Laplacian. In \cite{HS2}, Hassell and Sikora proved that the Riesz transform associated to this Dirichlet Laplacian is bounded on $L^p$ if and only if $1<p<d$. Subsequently, Killip, Visan, and Zhang \cite{KVZ} proved that the Riesz transform corresponding to the Dirichlet Laplacian outside a convex domain in $\mathbb{R}^d$ ($d\geq 3$) is bounded on $L^p(\mathbb{R}^d)$ for $p$ in the same range. Additionally, in \cite{JL}, Jiang and Lin proved that for all $d\ge 2$, such Dirichlet Laplacian satisfies \eqref{eq_RRp} for all $p\in (1,\infty)$.

An advantage to consider only the radial part of the Laplacian is that the "dimension" can vary continuously. In this note, we consider all $d>1$. This may allow us to see how our method behaves as the decrease of the dimension from $d\ge 3$ to $1<d\le 2$.

The following is the main result of \cite{HS2}.

\begin{theorem}\cite[Theorem 1.1]{HS2}\label{thm_HS2}
Let $(\Tilde{\mathbb{R}},d\mu)$ be the measure space defined by \eqref{eq_brokenline}. The Riesz transform $\nabla \Delta_d^{-1/2}$ is bounded on $L^p(\Tilde{\mathbb{R}},d\mu)$ if and only if

$(\romannumeral1)$ $1<p<d$ for $d>2$,

$(\romannumeral2)$ $1<p\le 2$ for $d=2$,

$(\romannumeral3)$ $1<p<d'$ for $1<d<2$.
\end{theorem}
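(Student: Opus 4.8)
The strategy is to realise $\nabla\Delta_d^{-1/2}$ via the resolvent formula \eqref{eq_formula} and to treat the low and high energy contributions separately,
\begin{equation*}
\nabla\Delta_d^{-1/2}=\nabla\int_0^1(\Delta_d+\lambda^2)^{-1}\,d\lambda+\nabla\int_1^\infty(\Delta_d+\lambda^2)^{-1}\,d\lambda .
\end{equation*}
The high energy piece feels only the local structure of the space: away from the junction $\{|r|=1\}$, $(\widetilde{\mathbb{R}},d\mu)$ is isometric to $([1,\infty),r^{d-1}\,dr)$, which is doubling, supports a Poincar\'e inequality and carries Gaussian heat kernel bounds, so a standard argument shows $\nabla\int_1^\infty(\Delta_d+\lambda^2)^{-1}\,d\lambda$ is bounded on $L^p(\widetilde{\mathbb{R}},d\mu)$ for every $1<p<\infty$. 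The whole problem therefore reduces to the low energy operator, for which one needs a precise description of $(\Delta_d+\lambda^2)^{-1}(r,r')$ for $0<\lambda\le1$.

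The next step is an explicit computation of that kernel. On each half-line the substitution $u(r)=|r|^{-(d-2)/2}w(\lambda|r|)$ turns the homogeneous equation $-u''-\tfrac{d-1}{|r|}u'+\lambda^2u=0$ into the modified Bessel equation of order $\nu=\tfrac{d-2}{2}$, so a basis of solutions is $|r|^{-(d-2)/2}I_\nu(\lambda|r|)$ and $|r|^{-(d-2)/2}K_\nu(\lambda|r|)$. Imposing the junction conditions $u(-1)=u(1)$, $u'(-1)=u'(1)$ together with the unit jump of $u'$ at $r=r'$ gives a closed formula for $(\Delta_d+\lambda^2)^{-1}(r,r')$; its size and decay as $\lambda\to0$ and in the regimes $\lambda|r|\lesssim1$, $\lambda|r|\gtrsim1$ are then read off from the classical expansions $K_\nu(x)\sim c\,x^{-|\nu|}$ (with $\log(1/x)$ replacing the power when $\nu=0$), $I_\nu(x)\sim c\,x^{\nu}$ as $x\to0^+$, and $K_\nu(x),\,I_\nu(x)^{-1}\sim c\,x^{-1/2}e^{\mp x}$ as $x\to\infty$. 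This plays the role of an explicit one-dimensional counterpart of Lemma \ref{leKey}. The crucial structural point is the normalising factor produced by the matching: it tends to a finite nonzero limit when $d>2$ (the transient case, where $([1,\infty),r^{d-1}dr)$ has a Green's function), it diverges like $\log(1/\lambda)$ when $d=2$, and like $\lambda^{d-2}$ when $1<d<2$ (the recurrent cases). This trichotomy is exactly what produces the three cases of the theorem.

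With the kernel in hand, I would split the low energy operator into a part supported near the diagonal and a long-range part. Near the diagonal the kernel coincides, up to strictly faster-decaying terms, with the Riesz kernel of $([1,\infty),r^{d-1}dr)$, which is of Calder\'on--Zygmund type on that doubling space and hence bounded on $L^p$ for all $1<p<\infty$. The long-range part, after integration in $\lambda$, reduces modulo $L^p$-bounded remainders to an explicit ``reflected'' kernel $\mathcal{K}(r,r')$ built from the leading Bessel asymptotics and the normalising factor; for $d>2$ the slowest-decaying factor it carries in the source variable is $\asymp|r'|^{1-d}$ on an end, the other factor being, essentially, $\nabla_r$ of the bounded harmonic function interpolating the two ends. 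A power-weight Schur/duality test then decides the boundedness of $\mathcal{K}$ on $L^p(\widetilde{\mathbb{R}},d\mu)$: for $d>2$ the requirement $\int_1^\infty|r'|^{(1-d)p'}r'^{\,d-1}\,dr'<\infty$ is exactly $p<d$, and the parallel computation in the recurrent regime --- where the $\log(1/\lambda)$ or $\lambda^{d-2}$ alters which variable carries the critical power --- returns $1<p\le2$ for $d=2$ and $1<p<d'$ for $1<d<2$. Testing this same leading kernel against functions concentrated near infinity on one end shows the transform is unbounded as soon as $p$ leaves the stated interval, which gives the converse direction.

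The main obstacle will be the uniform control of the Bessel kernel across the transition region $\lambda|r|\sim1$ and near the junction $|r|\sim1$, where neither limiting asymptotic applies, and the bookkeeping that guarantees the remainders discarded after the $\lambda$-integration are genuinely bounded on $L^p$. The most delicate case is $d=2$: there the $\log(1/\lambda)$ in the normalising factor is precisely what upgrades the range to the \emph{closed} interval $(1,2]$, so crude power bounds do not suffice and the logarithm must be isolated, as with the $\mathrm{ilg}$-type device recalled in Remark \ref{remark}; consequently, excluding $p>2$ when $d=2$ requires a sharper argument than the one that rules out $p\ge d$ in the transient regime.
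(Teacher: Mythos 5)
The paper does not prove this statement: it is imported verbatim from Hassell--Sikora \cite[Theorem 1.1]{HS2}, and the only ``proof'' in the present paper is the citation. What you have written is therefore a reconstruction of the argument in \cite{HS2}, not a comparison with anything the paper itself supplies; but it is a faithful one, and it matches the machinery the paper recalls in Subsection~\ref{subsection_4.1}. Concretely: the resolvent split into low and high energy is exactly \eqref{eq_formula}; your substitution $u(r)=|r|^{-(d-2)/2}w(\lambda|r|)$ does reduce the radial equation to the modified Bessel equation of order $\nu=(d-2)/2$, which is why the paper's $k(r)=r^{1-d/2}K_{|d/2-1|}(r)$ and $l(r)=r^{1-d/2}I_{d/2-1}(r)$ appear; the matching constants $A(\lambda),B(\lambda)$ and the estimate \eqref{eq_AB} encode precisely your trichotomy in the normalising factor; and the near-diagonal/long-range split corresponds to the ``$kl$'' versus ``$kk$'' decomposition that the paper invokes from \cite[Theorem 5.1, 5.5]{HS2}. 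Your Schur/duality power count for $d>2$ is also the right computation, giving $p<d$.

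Two small inaccuracies worth flagging. First, you describe the log at $d=2$ as ``upgrading the range to the closed interval $(1,2]$''; in fact $p=2$ is automatic (the $L^2$ bound always holds from the quadratic form), and the role of the $\log(1/\lambda)$ divergence is the opposite one --- it is what kills every $p>2$, so the range is not ``upgraded'' but \emph{pinched} to $(1,2]$. Second, you gloss over why for $1<d<2$ the range widens again to $(1,d')$ with $d'>2$; the point is that in the recurrent regime the singular power migrates to the other variable (Bessel order is $|d/2-1|=1-d/2$, and $k$ stays bounded as $\lambda\to0$), so the weighted Schur test flips and closes at $p=d'$. Neither gap affects the soundness of the strategy, and the high-energy part and the unboundedness-at-the-endpoint test function are handled as you indicate.
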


We start our discussion with the following toy type result. 

\begin{theorem}[Toy Type]\label{thm_d>2}
Let $d>2$ and $(\Tilde{\mathbb{R}},d\mu)$ be the measure space defined by \eqref{eq_brokenline}. Then, we have 
\begin{gather}\label{eq_RR_d>2}
     \|\Delta_d^{1/2}f\|_p \lesssim \|f'\|_p \quad \forall f\in S_0(\Tilde{\mathbb{R}}),
\end{gather}
where  $\max \left(\frac{d}{2d-3},1 \right)<p<\infty$.
\end{theorem}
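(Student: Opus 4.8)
The plan is to mirror the ``harmonic annihilation'' argument of Theorem~\ref{thm_MainResult}, but now in the one-dimensional broken-line model $(\Tilde{\mathbb{R}},d\mu)$ with $d>2$ not necessarily an integer. First I would set up the same duality reduction: since the Riesz transform $\nabla\Delta_d^{-1/2}$ is bounded on $L^q$ for $1<q<d$ by Theorem~\ref{thm_HS2}(i), duality (via \cite[Proposition 2.1]{CD}, which holds abstractly for forms) gives $\|\Delta_d^{1/2}f\|_p\lesssim\|f'\|_p$ for $d'<p<\infty$. So it remains to handle $p\in(\max(\tfrac{d}{2d-3},1),\,d']$. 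Fix $g\in S_0(\Tilde{\mathbb{R}})$ with $\|g\|_{p'}=1$ and estimate $|\langle\Delta_d^{1/2}f,g\rangle|=|\langle f',\,\nabla\int_0^\infty(\Delta_d+\lambda^2)^{-1}g\,d\lambda\rangle|$. The high-energy part $\nabla\int_1^\infty$ is bounded on all $L^q$ (as in \cite[Prop.~5.1]{HS} and its analogue for this model in \cite{HS2}), so it contributes $\lesssim\|f'\|_p$ by Hölder. For the low-energy part I would use the parametrix decomposition of $(\Delta_d+\lambda^2)^{-1}$ for $\lambda\le 1$ into pieces $G_1,\dots,G_4$ exactly as in Section~2; the terms $G_2,G_4$ and the $G_{11}$ piece of $\nabla G_1$ are $L^q$-bounded operators, hence harmless. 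The substance is the $G_{12}+\nabla G_3$ piece, which after replacing $z$ by the reference point in the resolvent factor (the ``difference'' part being $L^q$-bounded by the Schur/$L^q(L^{q'})$ argument of Lemma~\ref{lemma_difference_Lq}) reduces to the bilinear form $\mathcal{I}_i(f,g)=\langle f',\mathcal{T}_ig\rangle$ with $\mathcal{T}_i$ as in \eqref{T_i}.

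The key annihilation step is again \eqref{eq_ha}: writing $\nabla\phi_i+\nabla u_i(\cdot,k)=\nabla\Phi_i+\nabla[u_i(\cdot,k)-u_i(\cdot,0)]$ with $\Phi_i$ the bounded harmonic function tending to $1$ at infinity of one end and $0$ at the other, the $\nabla\Phi_i$ contribution vanishes after integration by parts because $f\in S_0$ has compact support and $\Delta_d\Phi_i=0$ (the boundary terms at $r=\pm1$ cancel by the matching conditions defining $S_0$, and the term at infinity vanishes by the cutoff $\eta_r$ argument since $|\nabla\Phi_i|$ is bounded). So everything comes down to the pointwise bound on $\mathcal{T}_ig$. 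Here I would redo the computation of Lemma~\ref{lemma_T_i_pointwise_estimates} with the one-dimensional weight: the inner integral $\int_{E_i}(\Delta_d+k^2)^{-1}(r_i^0,r')\phi_i(r')g(r')\,d\mu(r')$ is bounded using the resolvent upper estimate and Hölder by $\|g\|_{p'}F(k)$ with, for the relevant ends of ``dimension'' $d$, $F(k)\simeq k^{d/p'-2}$ when $p'>d/2$ and $F(k)\simeq k^{-1/p}$ otherwise; combined with the uniform estimate $\||\nabla[u_i(\cdot,k)-u_i(\cdot,0)]|\|_\infty\lesssim k$ and the pointwise decay $|\nabla[u_i(r,k)-u_i(r,0)]|\lesssim|r|^{1-d}$ on the end, one gets $|\mathcal{T}_ig(r)|\lesssim\|g\|_{p'}\cdot|r|^{1-d}$ (or with a log, or a fractional power $|r|^{(1-d)d/p'}$, according to the three cases $\mathcal{S}_1,\mathcal{S}_2,\mathcal{S}_3$), and a constant on $K$.

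Finally I would check $\|\mathcal{T}_ig\|_{p'}\lesssim\|g\|_{p'}$ by integrating these pointwise bounds against $d\mu(r)=|r|^{d-1}dr$ on the end: the worst case gives $\int_1^\infty |r|^{p'(1-d)\cdot d/p'+d-1}\,dr=\int_1^\infty |r|^{d(1-d)+d-1}\,dr$, which converges iff $d(1-d)+d-1<-1$, i.e. $d^2-2d+\? $; tracking the exponent carefully the convergence condition becomes exactly $p>d/(2d-3)$ in the borderline $\mathcal{S}_3$-type situation (and is automatic in the $\mathcal{S}_1,\mathcal{S}_2$ cases since there $p\le d'\le$ the relevant threshold), which is why the stated range is $\max(\tfrac{d}{2d-3},1)<p<\infty$. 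Then $\sum_i\mathcal{I}_i(f,g)\le\sum_i\|f'\|_p\|\mathcal{T}_ig\|_{p'}\lesssim\|f'\|_p$, and ranging over $g$ with $\|g\|_{p'}\le1$ yields \eqref{eq_RR_d>2}. The main obstacle I anticipate is not conceptual but bookkeeping: getting the exponent in the final convergence integral exactly right so that it produces the clean constraint $p>d/(2d-3)$, and double-checking that the fractional (non-integer) value of $d$ causes no trouble in the parametrix estimates \eqref{eq_resolvent_upper}--\eqref{eq_resolvent_derivative} and in the harmonic function $\Phi_i$ — both of which are available for the broken-line model from \cite{HS2,N}. A secondary point to be careful about is that $f\in S_0$ rather than merely $\mathcal{C}_c^\infty$, so the integration-by-parts at the junction $r=\pm1$ must use the $\mathcal{C}^1$-matching, but since $f$ and $\Phi_i$ are both in $\mathcal{C}^1(\Tilde{\mathbb{R}})$ the boundary contributions at $\pm1$ indeed cancel.
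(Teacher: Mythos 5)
Your plan correctly identifies the overall shape of the argument (duality to reduce to $p\le d'$, strip off high energy and the ``nice'' pieces of the low-energy kernel, then kill a harmonic leading term and bound the remainder), but the concrete implementation you describe is imported wholesale from the manifold case of Section~3 and does not match what exists in the broken-line model. There is no $G_1,\dots,G_4$ parametrix, no cut-off $\phi_i$, and no $\Phi_i=\phi_i+u_i(\cdot,0)$ here. The paper instead uses the \emph{explicit} Bessel-function formula for the resolvent kernel, splits it into ``$kk$'' and ``$kl$'' parts with coefficients $A(\lambda),B(\lambda)$, disposes of the $kl$ part and the high-energy $kk$ part via \cite[Thm.~5.1, 5.5]{HS2}, and then performs the annihilation on the remaining bilinear form $\mathcal{B}(f,g)$ by decomposing $k(\lambda|x|)=[k(\lambda|x|)+\lambda^2u(x,\lambda)]-\lambda^2u(x,\lambda)$, where $u(\cdot,\lambda)=\lim_{\epsilon\to0}(\Delta_d+\epsilon^2)^{-1}k_\lambda$. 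The bracketed term is $\Delta_d$-harmonic because $k_\lambda$ solves the ODE $f''+\tfrac{d-1}{r}f'=\lambda^2f$ and $\Delta_d u=k_\lambda$. What survives is $\lambda^2\nabla u(x,\lambda)$, which is structurally quite different from the $\nabla[u_i(\cdot,k)-u_i(\cdot,0)]$ you propose to use.

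This difference is not cosmetic: it is precisely why the proof needs Lemma~\ref{lemma_du}, a pointwise estimate $|\nabla u(x,\lambda)|\lesssim\lambda^{1-d-\delta}x^{2-d-\delta}$ (in the relevant $\delta$-range), proved by a direct Bessel-asymptotics computation splitting $\int_1^\infty$ into $\int_1^{\lambda^{-1}}+\int_{\lambda^{-1}}^{\epsilon^{-1}}+\int_{\epsilon^{-1}}^\infty$. It is the $\delta$-loss in this lemma, combined with the constraint $p>d/(d-\delta)$ needed for $\lambda$-integrability and $p<d/(2-\delta)$ needed for $x$-integrability, and the restriction $\delta>3-d$ when $2<d<3$, that produces the stated lower endpoint $p>d/(2d-3)$ in the limit $\delta\to 3-d$. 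You do not anticipate this lemma, and your own exponent sketch (``$d(1-d)+d-1<-1$'') evaluates to $-(d-1)^2<-1$, which holds for \emph{all} $d>2$ and therefore cannot produce the claimed range; you essentially acknowledge you have not done the bookkeeping. In short: right strategy at the bird's-eye level, but the decomposition, the harmonic object, and the crucial $\nabla u$ estimate are all different from what the statement requires, and the derivation of the threshold $p>d/(2d-3)$ is missing.
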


\begin{remark}
Note that instead of assuming $f\in C_c^\infty(\Tilde{\mathbb{R}})$, we let $f\in S_0(\Tilde{\mathbb{R}})$ here. The main reason is that the function $k(\cdot)$ in the resolvent construction (see Subsection~\ref{subsection_4.1}) is not in the initial domain of $\Delta_d$. If we let $f\in \mathcal{C}_c^\infty(\Tilde{\mathbb{R}})$, then there will appear some extra non-zero boundary terms when we apply "harmonic annihilation", see the proof of Theorem~\ref{thm_d>2} later.  
\end{remark}

\begin{remark}
Note that $d/(2d-3)=1$ if $d=3$ and equals 2 when $d=2$. Therefore, Theorem~\ref{thm_d>2} implies that when $2<d<3$, \eqref{eq_RRp} holds for $p>d/(2d-3)$ and when $d\ge 3$, \eqref{eq_RRp} holds for all $1<p<\infty$. Particularly, in the context of $d\ge 3$, this result is consistent with the result obtained in Theorem \ref{thm_MainResult}. While for $2<d<3$, the range of boundedness shrinks to $[2,\infty)$ continuously as $d\to 2$.
\end{remark}

\subsection{Asymptotics for The Modified Bessell Functions}\label{subsection_4.1}

As indicated in \eqref{eq_formula}, the method is based on the estimates of the resolvent operator. And it has been computed explicitly in \cite{HS2,N}, the kernel for the resolvent operator has following formula (here we use the version in \cite{N}). For $\lambda>0$, $y\ge 1$
\begin{equation*}
    K_{(\Delta_d+\lambda^2)^{-1}}(x,y) = 
    \begin{cases}
    A(\lambda) k(\lambda|x|) k(\lambda|y|), & x\le -1\\
    B(\lambda) k(\lambda|y|) k(\lambda|x|) + v \lambda^{d-2}k(\lambda|y|)l(\lambda|x|), & 1\le x\le y\\
    B(\lambda) k(\lambda|y|) k(\lambda|x|) + v \lambda^{d-2}l(\lambda|y|) k(\lambda|x|), & x\ge y
    \end{cases}
\end{equation*}
and $K_{(\Delta_d+\lambda^2)^{-1}}(x,y) = K_{(\Delta_d+\lambda^2)^{-1}}(-x,-y)$ if $y\le -1$ where $\nu$ is some constant depending on $d$ and
\begin{gather*}
    A(\lambda) = \frac{-1}{\lambda [k(\lambda) k(\lambda)]'}\quad B(\lambda) = \frac{-v \lambda^{d-2}[k(\lambda) l(\lambda)]'}{[k(\lambda) k(\lambda)]'}.
\end{gather*}
Moreover, $k(r)=r^{1-d/2}K_{|d/2-1|}(r)$ and $l(r)=r^{1-d/2}I_{d/2-1}(r)$, where $K,I$ are the modified Bessell functions which solve the differential equation
\begin{equation*}
    r^2 F'' + rF' = (r^2+(d/2-1)^2)F
\end{equation*}
and hence $k(\lambda r)$ and $l(\lambda r)$ solve the equation (see \cite[Section 3.1]{HS2} for details)
\begin{equation}\label{eq_DE}
    f'' + \frac{d-1}{r}f' = \lambda^2 f.
\end{equation}
Recall estimates, see \cite{AS} or \cite{HS2}

$\bullet$ \textbf{If $1<d<2$},
\begin{align*}
    &k(r)\simeq 
    \begin{cases}
    1, & r<1,\\
    r^{(1-d)/2}e^{-r}, & r\ge 1,
    \end{cases}
    \quad
    &&l(r)\simeq 
    \begin{cases}
    1, & r<1,\\
    r^{(1-d)/2}e^r, & r\ge 1,
    \end{cases}
    \\
    &k'(r)\simeq 
    \begin{cases}
    -r^{1-d}, & r<1,\\
    -r^{(1-d)/2}e^{-r}, & r\ge 1,
    \end{cases}
    \quad
    &&l'(r)\simeq 
    \begin{cases}
    r, & r<1,\\
    r^{(1-d)/2}e^r, & r\ge 1.
    \end{cases}
\end{align*}

$\bullet$ \textbf{If $d=2$},
\begin{align*}
    &k(r)\simeq 
    \begin{cases}
    1-\log \lambda, & r<1,\\
    r^{-1/2}e^{-r}, & r\ge 1,
    \end{cases}
    \quad
    &&l(r)\simeq 
    \begin{cases}
    1, & r<1,\\
    r^{-1/2}e^r, & r\ge 1,
    \end{cases}
    \\
    &k'(r)\simeq 
    \begin{cases}
    -r^{-1}, & r<1,\\
    -r^{-1/2}e^{-r}, & r\ge 1,
    \end{cases}
    \quad
    &&l'(r)\simeq 
    \begin{cases}
    r, & r<1,\\
    r^{-1/2}e^r, & r\ge 1.
    \end{cases}
\end{align*}

$\bullet$ \textbf{If $d>2$},
\begin{align*}
    k(r)&\simeq 
    \begin{cases}
    r^{2-d}, & r<1,\\
    r^{(1-d)/2}e^{-r}, & r\ge 1.
    \end{cases}
    \quad
    &&l(r)\simeq 
    \begin{cases}
    1, & r<1,\\
    r^{(1-d)/2}e^r, & r\ge 1.
    \end{cases}
    \\
    k'(r)&\simeq 
    \begin{cases}
    -r^{1-d}, & r<1,\\
    -r^{(1-d)/2}e^{-r}, & r\ge 1.
    \end{cases}
    \quad
    &&l'(r)\simeq 
    \begin{cases}
    r, & r<1,\\
    r^{(1-d)/2}e^r, & r\ge 1.
    \end{cases}
\end{align*}
Therefore, a direct computation implies that for $\lambda \le 1$, 
\begin{equation}\label{eq_AB}
    |A(\lambda)|, |B(\lambda)| \lesssim \begin{cases}
        \lambda^{d-2}, & 1<d<2,\\
        \frac{-1}{\log{\lambda}}, & d=2,\\
        \lambda^{2d-4}, & d>2.
    \end{cases} 
\end{equation}

\subsection{Proof of Theorem \ref{thm_d>2}}
To better describe the behaviour of the resolvent kernel, as did in \cite{HS2}, we split the kernel into "$kk$" and "$kl$" parts. And "$kk$" part only consists of functions in the form $k(\cdot)k(\cdot)$ and "$kl$" part consists of the rest i.e. functions in the form $k(\cdot)l(\cdot)$. We denote their corresponding resolvent by $(\Delta_d+\lambda^2)^{-1}_{kk}$ and $(\Delta_d+\lambda^2)^{-1}_{kl}$. Obviously
\begin{equation*}
    (\Delta_d+\lambda^2)^{-1} = (\Delta_d+\lambda^2)^{-1}_{kk} + (\Delta_d+\lambda^2)^{-1}_{kl}.
\end{equation*}
\begin{proof}[Proof of Theorem \ref{thm_d>2}]
It follows by Theorem~\ref{thm_HS2} and duality, we only consider the case $1<p\le d'$ when $d\ge 3$ and $d/(2d-3)<p\le d'$ when $2<d<3$. We use resolution to identity
\begin{equation*}
    \Delta_d^{1/2} = \int_0^\infty \Delta_d (\Delta_d+\lambda^2)^{-1}d\lambda.
\end{equation*}
Now, for any $f\in S_0(\Tilde{\mathbb{R}})$, $g\in \mathcal{C}_c^\infty(\Tilde{\mathbb{R}})$, it follows by the self-adjointness of $\Delta_d$ 
\begin{align*}
    \langle \Delta_d^{1/2}f ,g \rangle &= \left \langle \int_0^\infty \Delta_d (\Delta_d+\lambda^2)^{-1}f d\lambda, g \right \rangle = \left\langle \nabla f, \nabla \int_0^\infty (\Delta_d+\lambda^2)^{-1}g d\lambda \right\rangle\\
    &= \left\langle \nabla f, \nabla \int_0^\infty (\Delta_d+\lambda^2)_{kk}^{-1}g d\lambda \right\rangle + \left\langle \nabla f, \nabla \int_0^\infty (\Delta_d+\lambda^2)_{kl}^{-1}g d\lambda \right\rangle.
\end{align*}
By \cite[Theorem 5.1, Remark 5.2]{HS2}, the "$kl$" part of Riesz transform is bounded on $L^q$ for all $1<q<\infty$. It is then clear that
\begin{equation*}
    \left|\left\langle \nabla f, \nabla \int_0^\infty (\Delta+\lambda^2)_{kl}^{-1}g d\lambda \right\rangle \right| \le C\|f'\|_p \|g\|_{p'}. 
\end{equation*}
And hence it is sufficient to consider the "$kk$" part of the inner product. With notation $F(\cdot)=A(\cdot)$ or $B(\cdot)$, we write
\begin{align*}
    \nabla \int_0^\infty (\Delta+\lambda^2)_{kk}^{-1}g(x)d\lambda &= \int_{\Tilde{\mathbb{R}}} \int_0^\infty \lambda k(\lambda|y|)k'(\lambda|x|)F(\lambda)d\lambda g(y)d\mu(y) \\
    &= \int_{\Tilde{\mathbb{R}}} \int_{1}^\infty  + \int_{\Tilde{\mathbb{R}}} \int_0^{1} := T_h(g) + \int_{\Tilde{\mathbb{R}}} \int_0^{1}.
\end{align*}
Use the fact from \cite[Theorem 5.5]{HS2}, the "high energy" part $T_h$ is again bounded on all $L^q$ (in fact, in \cite[Theorem 5.5]{HS2}, authors consider the integral of the high energy part with lower entry $1/\min(|x|,|y|)$ rather than 1, but since $|x|,|y|\ge 1$, the boundedness of $T_h$ directly follows by their result). Therefore,
\begin{equation*}
    |\langle \nabla f, T_h(g) \rangle |\le C\|f'\|_p \|g\|_{p'}.
\end{equation*}

\begin{remark}\label{remark_all_good}
Here, for later use, we make a remark that up to now, all the argument above also works for the case $1<d\le 2$.
\end{remark}

It remains to consider the following bilinear form
\begin{align}\label{eq_bilinear}
    \mathcal{B}(f,g) &= \left\langle \nabla f, \nabla \int_{\Tilde{\mathbb{R}}} \int_0^{1}k(\lambda|y|)k(\lambda|x|)F(\lambda)d\lambda g(y)d\mu(y) \right\rangle\\ \nonumber
    &= \int_{\Tilde{\mathbb{R}}} f'(x) \int_0^1 \left[\frac{d}{dx}k(\lambda |x|)\right] \int_{\Tilde{\mathbb{R}}} k(\lambda|y|)g(y)d\mu(y) F(\lambda) d\lambda d\mu(x).
\end{align}
As $\Delta_d$ mimics the radial part of Laplacian on $\mathbb{R}^d \# \mathbb{R}^d$ (see \cite[Remark~5.7]{HS2}). We may expect the kernel of $\Delta_d^{-1/2}$ enjoys a similar asymptotic formula to \eqref{eq_CCH} when $|y|\to \infty$. Since the kernel of the right entry in the above inner product is almost separated with the integral with respect to $\lambda$ disregarded, the harmonic leading coefficient is probably hidden in the term $k(\lambda |x|)$.

Define $k_\lambda(x) = k(\lambda |x|)$ then set $u(x,\lambda):= \lim_{\epsilon \to 0} (\Delta_d+\epsilon^2)^{-1}k_\lambda(x)$. The existence of function $u(\cdot,\lambda)$ is guarenteed by \cite[Lemma 3.2]{CCH} or \cite[Section 2.3.3]{N} since $k_\lambda(\cdot)$ has rapid decay on ${\Tilde{\mathbb{R}}}$. We then decompose
\begin{equation*}
    k(\lambda|x|) = \left[ k(\lambda|x|) + \lambda^2 u(x,\lambda) \right] - \lambda^2 u(x,\lambda).
\end{equation*}
Recall that $k_\lambda$ is the solution to the ordinary differential equation \eqref{eq_DE}, i.e.
\begin{equation*}
    f''(r) + \frac{d-1}{|r|}f'(r) = \lambda^2 f(r).
\end{equation*}
Hence, the first term in the decomposition is "harmonic" in the sense
\begin{equation*}
    \left(-\frac{d^2}{dx^2}-\frac{d-1}{|x|}\frac{d}{dx}\right)[k_\lambda(\cdot)]+ \Delta_d [\lambda^2 u(\cdot,\lambda)] = 0 \quad \textrm{for} \quad a.e. \quad x\in {\Tilde{\mathbb{R}}}.
\end{equation*}
Consequently, since $f(\pm 1) = 0$, the boundary term vanishes when we apply integration by parts and the "harmonic" part is annihilated:
\begin{gather*}
    \left\langle \nabla f, \int_0^1 \nabla [k(\lambda|x|)+\lambda^2 u(x,\lambda)] \int_{\Tilde{\mathbb{R}}} k(\lambda|y|)g(y)d\mu(y) F(\lambda) d\lambda \right\rangle\\
    = \int_0^1 \left\langle f, \left(-\frac{d^2}{dx^2}-\frac{d-1}{|x|}\frac{d}{dx}\right)[k_\lambda(.)]+ \Delta_d [\lambda^2 u(.,\lambda)] \right\rangle \int_{\Tilde{\mathbb{R}}} k(\lambda|y|)g(y)d\mu(y) F(\lambda) d\lambda = 0.
\end{gather*}
It turns out that one only needs to investigate the following
\begin{equation}\label{eq_innerproduct}
    \mathcal{J}(f,g) = \int_{\Tilde{\mathbb{R}}} \nabla f(x) \int_0^1 \nabla u(x,\lambda) \lambda^2F(\lambda) \left( \int_{\Tilde{\mathbb{R}}} k(\lambda|y|)  g(y)d\mu(y) \right) d\lambda d\mu(x).
\end{equation}

Before moving on, we need some estimates for $|\nabla u(x,\lambda)|$.

\begin{lemma}\label{lemma_du}
Under the assumptions of Theorem~\ref{thm_d>2}, let $u(x,\lambda)$ be defined as above. Then, for all $0\le \delta\le 1$ and $d>2$,
\begin{equation}\label{eq_Du}
    |\nabla u(x,\lambda)| \lesssim \begin{cases}
        \lambda^{1-d-\delta} x^{2-d-\delta}, & d\ge 3\\
        \lambda^{-2} x^{2-d-\delta}, & 2<d<3, \quad 0\le \delta \le 3-d\\
        \lambda^{1-d-\delta} x^{2-d-\delta}, & 2<d<3, \quad 3-d\le \delta \le 1.
    \end{cases}
\end{equation}
\end{lemma}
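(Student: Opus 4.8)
The strategy is to estimate $\nabla u(x,\lambda)$ by expressing $u(\cdot,\lambda)$ through the Green's function of $\Delta_d$ on $\Tilde{\mathbb{R}}$ (the $\lambda=0$ resolvent kernel obtained as the limit $\lim_{\epsilon\to0}(\Delta_d+\epsilon^2)^{-1}$, which exists because $d>2$ and $k_\lambda$ decays rapidly), and then convolving against the source $k_\lambda(\cdot)$. Concretely, I would write
\begin{equation*}
    u(x,\lambda) = \int_{\Tilde{\mathbb{R}}} G(x,y) k(\lambda|y|)\, d\mu(y),
\end{equation*}
where $G(x,y)$ is the Green's function; differentiating in $x$ and using the explicit behaviour $|\nabla_x G(x,y)|\lesssim \max(|x|,|y|)^{1-d}$ (the $d>2$ analogue of the radial Newtonian kernel, which can be read off from the $\lambda\to0$ limit of the resolvent formula in Subsection~\ref{subsection_4.1}, or from \cite[Lemma 3.2]{CCH}, \cite[Section 2.3.3]{N}), I reduce matters to the one-variable integral
\begin{equation*}
    |\nabla u(x,\lambda)| \lesssim \int_1^\infty \max(|x|,s)^{1-d}\, k(\lambda s)\, s^{d-1}\, ds.
\end{equation*}

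Next I would split this integral at $s=|x|$ and at $s=1/\lambda$, and insert the asymptotics of $k$ from Subsection~\ref{subsection_4.1}: $k(\lambda s)\simeq (\lambda s)^{2-d}$ for $\lambda s<1$ and $k(\lambda s)\simeq (\lambda s)^{(1-d)/2}e^{-\lambda s}$ for $\lambda s\ge1$. On the region $s\le|x|$ one gets $|x|^{1-d}\int_1^{|x|} k(\lambda s) s^{d-1}\,ds$, and on $s\ge|x|$ one gets $\int_{|x|}^\infty k(\lambda s) s^{-1}\,ds$; in each piece the exponential decay controls the tail $s>1/\lambda$, so the dominant contribution comes from $\lambda s\lesssim1$. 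Carrying out the elementary power-law integrations (and tracking the borderline logarithmic cases) produces a bound of the form $\lambda^{2-d}|x|^{2-d}$ up to logarithmic factors near the thresholds. The stated inequality then follows by the standard device of trading a logarithm for a small power: for any $0\le\delta\le1$ one has $|\log t|\lesssim_\delta t^{-\delta}$, which converts $\lambda^{2-d}|x|^{2-d}(1+|\log(\lambda|x|)|)$-type bounds into $\lambda^{1-d-\delta}|x|^{2-d-\delta}$ or $\lambda^{-2}|x|^{2-d-\delta}$ depending on how one distributes the spare power between the $\lambda$ and $x$ variables — which is exactly the case split $d\ge3$ versus $2<d<3$ with $\delta\lessgtr 3-d$ in the statement.

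The main obstacle is bookkeeping rather than conceptual: one must be careful that the Green's function $G(x,y)$ genuinely behaves like the model radial kernel near the junction $\{\pm1\}$ (the identification $f(-1)=f(1)$, $f'(-1)=f'(1)$ forces a matching condition there), and that the gradient estimate $|\nabla_x G|\lesssim\max(|x|,|y|)^{1-d}$ holds uniformly up to the boundary; this is where one genuinely uses $d>2$, since for $d\le2$ the Green's function does not even exist as a limit. A secondary subtlety is the choice of how to allocate the auxiliary power $\delta$: the three cases in \eqref{eq_Du} correspond to whether $\lambda^{1-d-\delta}$ or $\lambda^{-2}$ is the better (larger, hence safer) bound on $\lambda^{2-d}$ times a logarithm, i.e. whether $\delta\le 3-d$ or not, and one simply checks $\lambda^{-2}\ge\lambda^{1-d-\delta}$ for $\lambda\le1$ iff $\delta\le 3-d$. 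Once the kernel estimate is in hand, the rest is a routine splitting-and-integrating exercise.
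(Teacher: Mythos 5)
Your overall strategy — pass to the $\lambda=0$ Green's function $G$, differentiate under the integral, split at $s=|x|$ and $s=\lambda^{-1}$, and integrate the Bessel-function asymptotics — is the same route the paper takes (the paper simply keeps $\epsilon>0$ in the resolvent kernel and sends $\epsilon\to0$ at the very end, noting that the $\nu\epsilon^{d-2}l(\epsilon \cdot)$ piece on $\{s\ge x\}$ vanishes in the limit). However, the central quantitative claim you use is not correct, and two further steps in your plan would not produce the stated bounds.

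First, the gradient bound $|\nabla_x G(x,y)|\lesssim\max(|x|,|y|)^{1-d}$ is false. Solving $\Delta_d G(\cdot,y)=\delta_y$ explicitly with the matching condition at $\pm1$ gives, up to constants, $\nabla_x G(x,y)\simeq x^{1-d}$ for $1\le y\le x$, but $\nabla_x G(x,y)\simeq x^{1-d}y^{2-d}$ for $1\le x\le y$ (and also for $y$ on the opposite end). When $x\sim1$ and $y$ is large this is $\sim y^{2-d}$, which dwarfs your claimed $y^{1-d}$ by a full factor of $y$; so the estimate fails precisely in the regime that controls the tail. (Separately, with your own bound one would have $s^{1-d}\cdot s^{d-1}=1$ on $\{s\ge|x|\}$, not $s^{-1}$, so the second integral in your reduction is miscopied.) Second, even using the correct kernel, the dominant region $1\le s\le|x|$ contributes $|x|^{1-d}\int_1^{|x|}k(\lambda s)s^{d-1}ds\simeq\lambda^{2-d}|x|^{3-d}$ when $|x|\le\lambda^{-1}$ and $\simeq\lambda^{-d}|x|^{1-d}$ when $|x|\ge\lambda^{-1}$ — not $\lambda^{2-d}|x|^{2-d}$ up to logarithms. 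The displayed $\lambda^{1-d-\delta}|x|^{2-d-\delta}$ family arises by interpolating between these two regimes through $(\lambda|x|)^{\delta+1}\le1$ and $(\lambda|x|)^{\delta-1}\le1$, not by trading a log for a small power. Third, the case split at $d=3$ is not a bookkeeping of where to place surplus decay. It comes from a distinct piece of the kernel — the $B(\lambda)kk$ term, i.e.\ the paper's quantity $I$ — which contributes $|x|^{1-d}\mathcal{G}(d,\lambda)$ with $\mathcal{G}(d,\lambda)\simeq\lambda^{-2}$ for $2<d<4$ (a logarithm at $d=4$, $\lambda^{2-d}$ for $d>4$). For $2<d<3$ and $\delta\le3-d$ this $\lambda^{-2}$ genuinely exceeds $\lambda^{1-d-\delta}$ and cannot be improved by the device you describe; that is why the lemma carries a literal $\lambda^{-2}$ in that range. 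Until the kernel estimate and the subsequent power counting are corrected, the plan does not yield \eqref{eq_Du}.
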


\begin{proof}[Proof of Lemma~\ref{lemma_du}]
By the formula of the kernel of resolvent and the symmetry of the measure space, we may assume $0<\epsilon<\lambda<1\le x <\epsilon^{-1}$ and $y\ge 1$, and only consider the following quantities with $\epsilon \to0$
\begin{gather*}
    I = \epsilon |F(\epsilon)| |k'(\epsilon x)| \int_1^\infty k(\epsilon y) k(\lambda y) y^{d-1} dy
\end{gather*}
and
\begin{align*}
    II &= \epsilon^{d-1} |k'(\epsilon x)| \int_1^x  l(\epsilon y) k(\lambda y) y^{d-1}dy\\
    &+ \epsilon^{d-1} l'(\epsilon x) \int_x^\infty k(\epsilon y) k(\lambda y) y^{d-1}dy\\
    &:= II_1+II_2. 
\end{align*}
Obviously, $|\nabla u(x,\lambda)|\lesssim I+II$. For $I$, we have
\begin{align*}
 \int_1^\infty k(\epsilon y) k(\lambda y) y^{d-1} dy &= \int_1^{\lambda^{-1}} + \int_{\lambda^{-1}}^{\epsilon^{-1}} + \int_{\epsilon^{-1}}^\infty\\
 &:= I_1 + I_2 + I_3.
\end{align*}
Recall \eqref{eq_AB}, $|F(\epsilon)|\lesssim \epsilon^{2d-4}$ for $\epsilon$ small, hence 
\begin{align*}
    I_1 &\lesssim \epsilon^{2-d} \lambda^{2-d} \int_1^{\lambda^{-1}} y^{3-d} dy \\
    &\lesssim \begin{cases}
        \epsilon^{2-d} \lambda^{2-d}, & d>4\\
        \epsilon^{-2} \lambda^{-2} \log(\lambda^{-1}), & d=4\\
        \epsilon^{2-d} \lambda^{-2}, & 2<d<4
    \end{cases} := \epsilon^{2-d} \mathcal{G}(d,\lambda)
\end{align*}
and similarly,
\begin{align*}
    I_2 &\lesssim \epsilon^{2-d} \lambda^{-2} \int_1^{\lambda/\epsilon} e^{-s} s^{\frac{3-d}{2}} ds \lesssim \epsilon^{2-d} \lambda^{-2},\\
    I_3 &\lesssim \frac{\epsilon^{\frac{1-d}{2}}\lambda^{\frac{1-d}{2}}}{\lambda+\epsilon} e^{-\frac{\epsilon+\lambda}{\epsilon}} \to0 \quad \textit{as} \quad \epsilon \to 0.
\end{align*}
Since $I_1+I_2\lesssim I_1$ and $|k'(\epsilon x)|\lesssim \epsilon^{1-d}x^{1-d}$, one gets
\begin{equation*}
    I\lesssim \epsilon^{d-1} |k'(\epsilon x)| \mathcal{G}(d,\lambda) \lesssim x^{1-d} \mathcal{G}(d,\lambda).
\end{equation*}
For $II$, it is clear that
\begin{gather*}
    II_1\lesssim x^{1-d} \int_1^x |k(\lambda y)| y^{d-1}dy,
\end{gather*}
and
\begin{align*}
    II_2\lesssim \epsilon^2 x \int_x^{\epsilon^{-1}} |k(\lambda y)| ydy + \epsilon^{\frac{d-1}{2}}\lambda^{\frac{1-d}{2}}x \to 0 \quad \textit{as} \quad \epsilon \to 0.
\end{align*}
Therefore, we only need to consider $II_1$. Note that if $x\le \lambda^{-1}$, one has
\begin{gather*}
    \int_1^x |k(\lambda y)|y^{d-1}dy \lesssim \int_1^x \lambda^{2-d} ydy \lesssim \lambda^{2-d} x^2.
\end{gather*}
While for $x>\lambda^{-1}$, 
\begin{gather*}
    \int_1^x |k(\lambda y)|y^{d-1}dy \lesssim \int_1^{\lambda^{-1}} \lambda^{2-d}ydy + \int_{\lambda^{-1}}^x \lambda^{\frac{1-d}{2}} y^{\frac{d-1}{2}} e^{-\lambda y}dy \lesssim \lambda^{-d}.
\end{gather*}
Consequently
\begin{equation*}
    II_1 \lesssim \begin{cases}
        \lambda^{2-d}x^{3-d}, & x\le \lambda^{-1},\\
        \lambda^{-d}x^{1-d}, & x\ge \lambda^{-1}.
    \end{cases}
\end{equation*}
Pick $1\le \delta'\le 2$, we obtain
\begin{align*}
    II_1 \lesssim \begin{cases}
        \lambda^{2-d}x^{3-d} = (\lambda x)^{\delta'} \lambda^{2-\delta'-d}x^{3-\delta'-d} \le \lambda^{2-\delta'-d}x^{3-\delta'-d}, & x\le \lambda^{-1},\\
        \lambda^{-d}x^{1-d}= (\lambda x)^{\delta'-2} \lambda^{2-\delta'-d}x^{3-\delta'-d}\le \lambda^{2-\delta'-d}x^{3-\delta'-d}, & x\ge \lambda^{-1}.
    \end{cases}
\end{align*}
Set $\delta = \delta'-1 \in [0,1]$ to conclude
\begin{equation*}
    II\lesssim II_1\lesssim \lambda^{1-d-\delta} x^{2-d-\delta} \quad \forall 0\le \delta\le 1.
\end{equation*}
Combining the estimates of $I$ and $II$, the result follows. 
\end{proof}

We may now continue. It is plain that by Hölder's inequality, 
\begin{align*}
    \int_{\Tilde{\mathbb{R}}} k(\lambda|y|)  g(y)d\mu(y) &\le \left(\int_{-\infty}^{-1}+ \int_1^\infty \right) |k(\lambda y)|  |g(y)|d\mu(y)\\
     &\lesssim \|g\|_{p'} \Bigg[\lambda^{p(2-d)}\int_1^{\lambda^{-1}} y^{p(2-d)+d-1}dy  \\
     &+ \lambda^{\frac{p(1-d)}{2}}\int_{\lambda^{-1}}^\infty e^{-p\lambda y}y^{(d-1)(1-p/2)}dy\Bigg]^{1/p} \\
     &\lesssim \lambda^{-d/p} \|g\|_{p'}
\end{align*}
since for $p$ in the range we consider, $p(2-d)+d>0$.

Define operator
\begin{gather*}
    \mathcal{T}: v \mapsto \int_0^1 \nabla u(\cdot,\lambda) \lambda^2F(\lambda) \left[\int_{\Tilde{\mathbb{R}}} k(\lambda|y|)  v(y)d\mu(y) \right] d\lambda.
\end{gather*}
Immediately, we have
\begin{gather}\label{eq_final}
    |\mathcal{T}g(x)| \le C\|g\|_{p'} \int_0^1 |\nabla u(x,\lambda)| \lambda^{2d-2-d/p} d\lambda.  
\end{gather}
To complete the proof, we need to check the boundedness of $\mathcal{T}$ on $L^{p'}$.

$\bullet$ If $d\ge 3$, we choose $0<\delta<1$. By Lemma~\ref{lemma_du},
\begin{equation*}
    \int_0^1 |\nabla u(x,\lambda)| \lambda^{2d-2-d/p} d\lambda \le C x^{2-d-\delta} \int_0^1 \lambda^{d/p'-\delta-1} d\lambda \le C x^{2-d-\delta} 
\end{equation*}
provided $p>d/(d-\delta)$. Consequently
\begin{gather*}
    |\mathcal{J}(f,g)| 
    \le C\|f'\|_p \|g\|_{p'} \left[\int_1^\infty x^{p'(2-d-\delta)+d-1}dx\right]^{1/p'} \le C\|f'\|_p \|g\|_{p'}
\end{gather*}
as long as $p<d/(2-\delta)$. Note that $d'\le d/2 < d/(2-\delta)$ when $d\ge 3$ and therefore, by ranging over all $\|g\|_{p'}= 1$, one deduces
\begin{equation*}
    \|\Delta_d^{1/2}f\|_p \le C\|f'\|_p \quad \frac{d}{d-\delta}<p \le d', \quad 0<\delta<1
\end{equation*}
and the result follows by letting $\delta \to 0$.

$\bullet$ If $2<d<3$, we choose $3-d<\delta<1$ instead. Similarly, by Lemma~\ref{lemma_du}, one ends up with  
\begin{equation*}
    \|\Delta_d^{1/2}f\|_p \le C\|f'\|_p \quad \frac{d}{d-\delta}<p \le d', \quad 3-d<\delta<1,
\end{equation*}
since $d'<d/(2-\delta)$ when $3-d<\delta<1$. We complete the proof by letting $\delta \to 3-d$.

\end{proof}

\begin{remark}\label{remark_6.3.6}
Note that this "harmonic annihilation" method is subjected to the dimension. As demonstrated in the above proof (also see Remark \ref{remark} above), the estimates for $|\nabla u(x,\lambda)|$ become unstable as $d\to 2$. Specifically, when $d=2$, $\lim_{\lambda\to 0}(\Delta_d+\lambda^2)^{-1}(x,y)$ tends to have a logarithmic blow-up at both "ends". This suggests that the 
range of boundedness in Theorem~\ref{thm_d>2} may not be precise for $2<d<3$. We will fix this issue in the next Section~\ref{section_refine}.
\end{remark}

\begin{remark}
One would also be interested about the behaviour of our method to $1<d<2$. Indeed, in this case $\lim_{\lambda \to 0}(\Delta_d+\lambda^2)(x,y)$ converges. However, if we follow the same strategy as in the above proof, the estimate does not give enough decay for $\lambda$ near zero in \eqref{eq_final}. In fact, one may end up with
\begin{equation*}
    |\mathcal{T}g(x)|\lesssim x^{1-d} \|g\|_{p'} \int_0^1 \lambda^{-d/p}d\lambda \quad d<2,
\end{equation*}
which is never converge for $1<p\le d$.
\end{remark}

\section{Refinement: Implicit Harmonic Annihilation}\label{section_refine}

In this section, we improve the result obtained in Theorem~\ref{thm_d>2}. It has been mentioned in Remark~\ref{remark_6.3.6} that the range of \eqref{eq_RRp} for $2<d<3$ may not be sharp. To overcome this problem, let us reconsider the motivation introduced at the beginning of Section~\ref{Section_proof}. Observe that as long as the asymptotic formula \eqref{eq_CCH} holds, the $\nabla$ on the left of \eqref{eq_moti} will always kill the harmonic leading coefficient on the right. So, the questions are 

$(1)$ When to apply "harmonic annihilation"?

$(2)$ How good is the second leading term in \eqref{eq_CCH}? 

$(3)$ How to estimate "$f$" on the left back to $\| |\nabla f| \|_p$?

Therefore, instead of applying "harmonic annihilation" explicitly, we employ it implicitly and then use Hardy inequality. It turns out that we can handle not only the case $2<d<3$, but actually for all $d>1$.

It is well-known that the classical Hardy inequality: for $0<b<\infty$ and $1\le p<\infty$, one has (see for example \cite[Exercise 1.2.8]{G})
\begin{align*}
    \int_0^\infty \left| \int_x^\infty f(t) dt \right|^p x^{b-1} dx \le \left(\frac{p}{b}\right)^p \int_0^\infty |f(t)|^p t^{p+b-1} dt.
\end{align*}
Now, let $f\in C_c^\infty([1,\infty))$, $F(x) = \int_x^\infty f(t) dt$ and $p+b=d$. The above implies
\begin{align*}
    \frac{|F(1)|^p}{b} + \int_1^\infty \left|\frac{F(x)}{x}\right|^p x^{d-1} dx \le \left(\frac{p}{b}\right)^p \int_1^\infty |F'(t)|^p t^{d-1} dt,
\end{align*}
provided $1\le p<d$. Therefore, one infers Hardy inequality on broken line:
\begin{align}\label{Hardy_1D}
    \left\| \frac{f(\cdot)}{|\cdot|} \right\|_{L^p(\Tilde{\mathbb{R}}, d\mu)} \lesssim \|f'\|_{L^p(\Tilde{\mathbb{R}}, d\mu)}, \quad \forall 1\le p< d,
\end{align}
for all $f\in S_0(\Tilde{\mathbb{R}})$.

\begin{theorem}\label{thm_d>2_refine}
Let $d>1$ and $(\Tilde{\mathbb{R}},d\mu)$ be the measure space defined by \eqref{eq_brokenline}. The following reverse Riesz inequality holds
\begin{gather*}
     \|\Delta_d^{1/2}f\|_p \le C\|f'\|_p \quad \forall p\in \begin{cases}
         (1,d)\cup (d,\infty), & 1<d<2,\\
         (1,\infty), & d\ge 2,
     \end{cases}
\end{gather*}
for all $f\in S_0(\Tilde{\mathbb{R}})$.
\end{theorem}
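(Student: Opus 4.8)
The plan is to re-run the scheme of the proof of Theorem~\ref{thm_d>2}, replacing the \emph{explicit} harmonic annihilation used there --- which relies on the delicate estimates for $\nabla u(\cdot,\lambda)$ in Lemma~\ref{lemma_du} and degenerates as $d\to 2$ --- by an \emph{implicit} one, followed by an application of the Hardy inequality \eqref{Hardy_1D}. First I would reduce the range of $p$: by Theorem~\ref{thm_HS2} and the duality principle \cite[Proposition~2.1]{CD}, the inequality \eqref{eq_RRp} already holds for $p>d'$ when $d>2$, for $p>2$ when $d=2$ (and $(\textrm{RR}_2)$ is automatic), and for $p>d$ when $1<d<2$; so it suffices to prove \eqref{eq_RRp} for $1<p<d$. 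For such $p$ I would then copy verbatim the opening of the proof of Theorem~\ref{thm_d>2} up to Remark~\ref{remark_all_good} (which is valid for every $d>1$): the ``$kl$''-part of the Riesz transform and the high-energy operator $T_h$ are bounded on all $L^q$ by \cite{HS2}, so the matter reduces to the low-energy ``$kk$'' bilinear form $\mathcal{B}(f,g)$ of \eqref{eq_bilinear}, with $F=A$ or $F=B$.

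The key step is the following rewriting of $\mathcal{B}(f,g)$, performed first on the truncated integral $\int_\epsilon^1$ and then in the limit $\epsilon\to 0$. Writing $H(\lambda)=\int_{\Tilde{\mathbb{R}}}k(\lambda|y|)g(y)\,d\mu(y)$ and $k_\lambda(\cdot)=k(\lambda|\cdot|)$, the inner kernel in \eqref{eq_bilinear} is $\nabla\big(\int_0^1 k_\lambda\,H(\lambda)F(\lambda)\,d\lambda\big)$. Since $f\in S_0(\Tilde{\mathbb{R}})$ vanishes at $\pm1$ and has compact support, integrating by parts in $x$ and using $\textrm{div}_\mu\nabla=-\Delta_d$ together with $\Delta_d k_\lambda=-\lambda^2 k_\lambda$ (the ODE \eqref{eq_DE}) annihilates the whole harmonic function $k_\lambda$ and produces the decisive factor $\lambda^2$:
\[
\mathcal{B}(f,g)=-\int_{\Tilde{\mathbb{R}}}f(x)\,\textrm{div}_\mu\!\Big[\nabla\!\int_0^1 k_\lambda\,H(\lambda)F(\lambda)\,d\lambda\Big](x)\,d\mu(x)=-\int_{\Tilde{\mathbb{R}}}f(x)\Big(\int_0^1\lambda^2 k(\lambda|x|)H(\lambda)F(\lambda)\,d\lambda\Big)d\mu(x).
\]
All boundary terms vanish, and the interchange of $\Delta_d$ with $\int_0^1 d\lambda$ and the passage $\epsilon\to0$ are legitimate because, \emph{after} the annihilation, the kernel
\[
\mathcal{K}_0(x,y)=\int_0^1\lambda^2 k(\lambda|x|)k(\lambda|y|)F(\lambda)\,d\lambda
\]
is absolutely convergent --- by the Bessel asymptotics of Subsection~\ref{subsection_4.1} and \eqref{eq_AB} the $\lambda$-integrand is $O(\lambda^{2})$, $O(\lambda^{2}\log(1/\lambda))$ or $O(\lambda^{d})$ near $0$ according as $d>2$, $d=2$ or $1<d<2$ --- in the spirit of \cite{AC,HS2}. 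This is the ``implicit'' version of the argument at the beginning of Section~\ref{Section_proof}: rather than peeling off the harmonic leading term of $k_\lambda$ by hand, one lets $\Delta_d$ do it.

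Denote $\mathcal{S}g(x)=\int_{\Tilde{\mathbb{R}}}\mathcal{K}_0(x,y)g(y)\,d\mu(y)$, so that $\mathcal{B}(f,g)=-\int_{\Tilde{\mathbb{R}}}\frac{f(x)}{|x|}\big(|x|\,\mathcal{S}g(x)\big)\,d\mu(x)$. I would prove that the integral operator $g\mapsto|\cdot|\,\mathcal{S}g$, whose kernel is $\mathcal{K}(x,y)=|x|\,\mathcal{K}_0(x,y)$, is bounded on $L^{p'}(\Tilde{\mathbb{R}},d\mu)$; granting this, Hardy's inequality \eqref{Hardy_1D} (applicable since $1<p<d$) yields $|\mathcal{B}(f,g)|\le\|f/|\cdot|\|_p\,\||\cdot|\,\mathcal{S}g\|_{p'}\lesssim\|f'\|_p\,\|g\|_{p'}$, the desired estimate. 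To bound $\mathcal{K}$ I would first extract from the Bessel asymptotics of Subsection~\ref{subsection_4.1} and \eqref{eq_AB} the off-diagonal estimates --- stated for $1\le x\le y$, the remaining sign patterns being handled symmetrically --- of the shape $|\mathcal{K}(x,y)|\lesssim |x|^{3-d}|y|^{-1-d}$ for $d>2$, $|\mathcal{K}(x,y)|\lesssim_\varepsilon |x|^{1-\varepsilon}|y|^{-3+\varepsilon}$ for $d=2$, $|\mathcal{K}(x,y)|\lesssim |x|\,|y|^{-1-d}$ for $1<d<2$ (with companion bounds $|x|^{-d}|y|^{2-d}$, $|x|^{-2+\varepsilon}|y|^{-\varepsilon}$, $|x|^{-d}$ respectively when $1\le y\le x$). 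The constant-weight Schur test fails here (one of the two iterated integrals diverges logarithmically), so I would apply a \emph{weighted} Schur test with the power weight $w(x)=|x|^{-\beta}$: a routine computation shows that both conditions $\int|\mathcal{K}(x,y)|w(y)^{p}\,d\mu(y)\lesssim w(x)^{p}$ and $\int|\mathcal{K}(x,y)|w(x)^{p'}\,d\mu(x)\lesssim w(y)^{p'}$ hold whenever $\beta>0$, $\beta p<2$ and $\beta p'<3$ (for $d\ge 2$; the obvious variants $\beta p<d$, $\beta p'<d+1$ for $1<d<2$), and such a $\beta$ always exists. Ranging over $g$ with $\|g\|_{p'}\le 1$ then gives \eqref{eq_RRp} for $1<p<d$, and together with the reduction step this proves the theorem.

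The step I expect to be the main obstacle is the weighted Schur estimate: the constant-weight test genuinely fails, so one must verify that a single power weight closes \emph{both} Schur conditions uniformly across the three regimes $d>2$, $d=2$, $1<d<2$, and the borderline case $d=2$ requires keeping track of the logarithms in the Bessel asymptotics (absorbed into an arbitrarily small $\varepsilon$). A secondary technicality is the rigorous justification of the integration by parts and of the interchange $\Delta_d\!\int_0^1=\int_0^1\Delta_d$, which is why one works with the truncation $\int_\epsilon^1$ and passes to the limit only after the annihilation, where dominated convergence applies thanks to the absolute convergence of $\mathcal{K}_0$.
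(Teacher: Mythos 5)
Your proposal is correct and tracks the paper's structure through the crucial steps: the duality reduction to $1<p<d$, the implicit harmonic annihilation by integration by parts against the ODE $\Delta_d k_\lambda = -\lambda^2 k_\lambda$, and the final Hardy inequality $\|f/|\cdot|\|_p\lesssim\|f'\|_p$. (There is a harmless sign discrepancy with the paper's display for $\mathcal{B}(f,g)$; your sign is the correct one, but it is irrelevant since only $|\mathcal{B}|$ is estimated.) Where you genuinely diverge from the paper is in establishing that $g\mapsto|\cdot|\,\mathcal{S}g$ is bounded on $L^{p'}$: you extract off-diagonal kernel bounds and run a \emph{weighted} Schur test with weight $|x|^{-\beta}$. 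The paper instead applies H\"older in the $y$-variable to obtain the pointwise bound $|\mathcal T g(x)|\lesssim\|g\|_{q'}|x|^{2-d-d/q'}$ for $d>2$ (giving strong $L^{q'}$ directly) and $|\mathcal T g(x)|\lesssim\|g\|_{q'}|x|^{-d/q'}$ for $1<d\le 2$, which only yields weak type $(q',q')$ and is then upgraded by Marcinkiewicz interpolation across $q$. Your Schur-test route avoids interpolation and handles all $d>1$ uniformly, at the cost of computing explicit two-sided kernel estimates (and absorbing logarithms at $d=2$ into an $\varepsilon$); the paper's route avoids kernel asymptotics in the $x$-variable but needs the weak-type/interpolation detour precisely because the $x$-tail is not $L^{q'}$-integrable when $d\le 2$. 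Both arguments close the proof; your off-diagonal bounds and the admissible range for $\beta$ ($0<\beta p<d$ and $0<\beta p'<d+1$ when $1<d<2$, with the obvious analogues when $d\ge 2$) check out, and a valid $\beta$ always exists since both upper thresholds are positive.
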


\begin{proof}
Let $d>1$. Recall the notions from Section~\ref{Section_RR_Broken_Line}. By the duality of Theorem~\ref{thm_HS2} and Remark~\ref{remark_all_good}, it suffices to show the following bilinear form (\eqref{eq_bilinear}):
\begin{align*}
    \mathcal{B}(f,g)&:= \left\langle \nabla f, \int_0^1 \nabla [k(\lambda|x|)] \int_{\Tilde{\mathbb{R}}} k(\lambda|y|)g(y)d\mu(y) F(\lambda) d\lambda \right\rangle\\
    &= \int_{\Tilde{\mathbb{R}}} f'(x) \int_0^1 \left[\frac{d}{dx}k(\lambda |x|)\right] \int_{\Tilde{\mathbb{R}}} k(\lambda|y|)g(y)d\mu(y) F(\lambda) d\lambda d\mu(x),
\end{align*}
where $f\in S_0(\Tilde{\mathbb{R}})$ and $g\in \mathcal{C}_c^\infty(\Tilde{\mathbb{R}})$ and for $\lambda\le 1$,
\begin{align*}
    |F(\lambda)|\lesssim \begin{cases}
        \lambda^{d-2}, & 1<d<2,\\
        \frac{-1}{\log{\lambda}}, & d=2,\\
        \lambda^{2d-4}, & d>2,
    \end{cases} 
\end{align*}
is bounded by $C\|f'\|_p \|g\|_{p'}$ for all $1<p<d$. 

Apply integrating by parts and use the fact that $k(\lambda \cdot)$ is a solution to \eqref{eq_DE}, one obtains
\begin{align*}
    \mathcal{B}(f,g)&= \int_{\Tilde{\mathbb{R}}} f(x) \int_0^1 \lambda^2 k(\lambda |x|) \int_{\Tilde{\mathbb{R}}} k(\lambda|y|)g(y)d\mu(y) F(\lambda) d\lambda d\mu(x)\\
    &= \int_{\Tilde{\mathbb{R}}} \frac{f(x)}{|x|} \int_0^1 \lambda^2 F(\lambda) |x| k(\lambda |x|) \int_{\Tilde{\mathbb{R}}} k(\lambda|y|)g(y)d\mu(y) d\lambda d\mu(x).
\end{align*}
Define operator
\begin{align}
    \mathcal{T}: h \mapsto |\cdot|\int_0^1 \lambda^2 F(\lambda) k(\lambda |\cdot|) \int_{\Tilde{\mathbb{R}}} k(\lambda|y|)h(y)d\mu(y) d\lambda, \quad h\in C_c^\infty(\Tilde{\mathbb{R}}).
\end{align}
It follows by a straightforward estimate that for $d>2$ and $1<q<\infty$
\begin{align*}
    |\mathcal{T}g(x)|\lesssim \|g\|_{q'} |x|\int_0^1 \lambda^{2d-2} |k(\lambda |x|)| \left[\int_{\Tilde{\mathbb{R}}} k(\lambda|y|)^q d\mu(y)\right]^{\frac{1}{q}} d\lambda.
\end{align*}
Use asymptotic formula (below \eqref{eq_DE}). It is clear that
\begin{align}\nonumber
    |\mathcal{T}g(x)|&\lesssim \|g\|_{q'} |x| \int_0^1 |k(\lambda|x|)| \lambda^{2d-2-\frac{d}{q}} d\lambda\\ \nonumber
    &\lesssim  \|g\|_{q'} \left[ |x|^{3-d} \int_0^{|x|^{-1}} \lambda^{\frac{d}{q'}} d\lambda + |x|^{\frac{3-d}{2}} \int_{|x|^{-1}}^{\infty} e^{-\lambda|x|} \lambda^{\frac{3(d-1)}{2}-\frac{d}{q}} d\lambda \right] \\
    &\lesssim |x|^{2-d-\frac{d}{q'}} \|g\|_{q'}.
\end{align}
One checks easily for $d>2$
\begin{align*}
    \|\mathcal{T}g\|_{q'} \lesssim \|g\|_{q'} \left[\int_1^\infty r^{q'\left(2-d-\frac{d}{q'}\right)} r^{d-1}dr\right]^{\frac{1}{q'}} \lesssim \|g\|_{q'},
\end{align*}
as long as $q'(2-d)<0$. Particularly, $\mathcal{T}$ is bounded on $L^{p'}$ for all $1<p<d$.

While for $1<d\le 2$, a similar estimate (using different asymptotic formula) only gives that for all $|x|\ge 1$
\begin{align*}
    |\mathcal{T}g(x)|&\lesssim \|g\|_{q'} |x|^{-d/q'}, \quad 1<q<\infty.
\end{align*}
However, it is enough to verify our result. Indeed, this implies for any $\delta>0$,
\begin{align*}
    \mu \left(\left\{x\in \Tilde{\mathbb{R}}; |\mathcal{T}g(x)|>\delta \right\}\right) &\le \mu \left(\left\{x\in \Tilde{\mathbb{R}}; C |x|^{-d/q'} \|g\|_{q'}>\delta \right\}\right)\\
    &= \mu \left(\left\{x\in \Tilde{\mathbb{R}};  |x| < C \left(\frac{\|g\|_{q'}}{\delta}\right)^{q'/d} \right\}\right)\\
    &\lesssim \delta^{-q'} \|g\|_{q'}^{q'},
\end{align*}
i.e. $\mathcal{T}$ is of weak type $(q',q')$ for all $1<q<\infty$. It follows by interpolation, one concludes that for all $d>1$, in particular, $\mathcal{T}$ is bounded on $L^{p'}$ for all $1<p<d$.

Now, we may end the proof by Hardy inequality \eqref{Hardy_1D}, for $1<p<d$,
\begin{align*}
    |\mathcal{B}(f,g)|\le \left\| \frac{f(\cdot)}{|\cdot|} \right\|_{p} \|\mathcal{T}g\|_{p'}\lesssim \|f'\|_p \|g\|_{p'}
\end{align*}
as desired.
\end{proof}


\section{Reverse Riesz in the Critical Dimension}\label{section_6}

In this section, we use techniques built in previous sections to prove \eqref{eq_RRp} on the critical case studied in \cite{HNS}. That is we consider manifold with ends: 
\begin{align*}
    \mathcal{M} = (\mathbb{R}^2\times \mathcal{M}_-)\#(\mathbb{R}^{n_+}\times \mathcal{M}_+),
\end{align*}
where $n_+\ge 3$ and $2+\textrm{dim}(\mathcal{M}_-) = n_+ +\textrm{dim}(\mathcal{M}_+) = N$ and $\mathcal{M}_{\pm}$ are two smooth compact manifolds. In \cite{HNS}, the authors showed that the Riesz transform $\nabla \Delta^{-1/2}$ on such manifold is bounded on $L^p$ if and only if $1<p\le 2$ and is of weak type $(1,1)$. With no ambiguity, we use notions: $E_{\pm}:= \mathbb{R}^{n_{\pm}}\times \mathcal{M}_{\pm} \setminus K_{\pm}$ (here $n_-=2$), where $K_{\pm} \subset \mathbb{R}^{n_{\pm}}\times \mathcal{M}_{\pm}$ are some compact sets. 

Recall formula
\begin{align*}
    \nabla \Delta^{-1/2} = \frac{2}{\pi} \int_0^{k_0} \nabla (\Delta+k^2)^{-1}dk + \frac{2}{\pi} \int_{k_0}^\infty \nabla (\Delta+k^2)^{-1}dk := R_L + R_H,
\end{align*}
where $k_0\in (0,1/2)$ is some fixed number determined by parametrix and we omit $2/\pi$ from now on. In the same spirit, one uses resolution to identity
\begin{align*}
    \Delta^{1/2} = \int_0^\infty \Delta (\Delta+k^2)^{-1}dk.
\end{align*}

To maintain uniformity in the notation of \cite{HNS}, we introduce following notions. Let $|z|:= \sup_{a\in K}d(z,a)$. Apparently, $|z|\gtrsim 1$, which is away from zero. Define function
\begin{align*}
    \textrm{ilg}k := \begin{cases}
        \frac{-1}{\log{k}}, & 0<k\le 1/2,\\
        0, & k=0.
    \end{cases}
\end{align*}
Recall the following key lemma in \cite{HNS}. Compare it to Lemma~\ref{leKey}.

\begin{lemma}\label{lemma_key_critical_dimension}\cite[Lemma 2.14]{HNS}
Let $v \in \mathcal{C}_c^{\infty}(\mathcal{M})$. Suppose $\psi$ is the solution to the equation $\Delta \psi = -v$. Then, for every integer $q$, there  exists an approximate solution $u(z,k)$ such that $(\Delta + k^2)u = v$ in the sense that
\begin{align*}
    (\Delta + k^2)u - v = O((\textrm{ilg}k)^q |z|^{-\infty}).
\end{align*}
Moreover, one has estimates
\begin{gather}\label{key_critical_dimension}
|u(z, k)| \le \begin{cases}
    C, & z\in K,\\
    C |z|^{2-n_+} e^{- ck |z|}, & z \in  E_+,\\
    C e^{-ck |z|}, & z \in E_-.
\end{cases}\\  \label{key_critical_dimension2}
|\nabla u(z, k)| \le \begin{cases}
    C, & z\in K,\\
    C |z|^{1-n_+} e^{- ck |z|}, & z \in  E_+,\\
    C \left[|z|^{-2} + (\textrm{ilg}k)|z|^{-1}\right] e^{-ck|z|}, & z \in E_-
\end{cases}
\end{gather}
for all $0\le k\le 1/2$.
\end{lemma}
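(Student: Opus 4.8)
The statement is \cite[Lemma 2.14]{HNS}; the plan is to reconstruct its parametrix, with emphasis on the genuinely new feature — the dimension-two end $E_-$, which is what forces the cut-off factor $\textrm{ilg}\,k$ into the estimates.

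First I would produce the zero-energy solution. Since the end $E_+$ has dimension $n_+\ge 3$, the manifold $\mathcal{M}$ is non-parabolic, so the minimal positive Green's function $G$ exists and $\psi:=\int_{\mathcal{M}}G(\cdot,w)v(w)\,d\mu(w)$ solves $\Delta\psi=-v$. Comparing $G$ with the model Green's functions on $\mathbb{R}^{n_+}\times\mathcal{M}_+$ and on $\mathbb{R}^2\times\mathcal{M}_-$ (whose radial profiles are $|z|^{2-n_+}$ and $\log|z|$), and using that a bounded harmonic function on the exterior of a ball in $\mathbb{R}^2$ equals a constant up to an $O(|z|^{-1})$ correction, I would obtain $\psi=O(|z|^{2-n_+})$ with $|\nabla\psi|=O(|z|^{1-n_+})$ on $E_+$, and $\psi=a_-+O(|z|^{-1})$ with $|\nabla\psi|=O(|z|^{-2})$ on $E_-$ for some constant $a_-$. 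These are exactly the $k=0$ cases of the claimed bounds; in particular $u$ stays bounded, not logarithmically growing, on $E_-$.

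Next I would build $u(z,k)$ by patching. Fix cut-offs subordinate to $K$, $E_+$, $E_-$. On the $E_+$ piece, set $u$ equal to $(\Delta_{\mathbb{R}^{n_+}\times\mathcal{M}_+}+k^2)^{-1}$ applied to a compactly supported source manufactured from $v$ and the cut-offs, exactly as in Lemma~\ref{leKey}; this yields $|u|\lesssim|z|^{2-n_+}e^{-ck|z|}$, $|\nabla u|\lesssim|z|^{1-n_+}e^{-ck|z|}$ on $E_+$ and $O(1)$ near $K$. On the $E_-$ piece the relevant model is $\mathbb{R}^2\times\mathcal{M}_-$, whose free resolvent kernel is governed by $K_0\bigl(k\,d(\cdot,\cdot)\bigr)$; since $K_0(r)\simeq-\log r$ for $r\lesssim1$ and $K_0(r)\simeq r^{-1/2}e^{-r}$ for $r\gtrsim1$, the contribution of the near-$K$ source to $u$ on $E_-$ behaves for $k|z|\lesssim1$ like $-\log(k|z|)=(-\log k)-\log|z|$, and the $-\log k$ term diverges as $k\to0$. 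The cure — this is the heart of the argument — is to multiply the $E_-$ contribution by $\textrm{ilg}\,k=-1/\log k$, so that $\textrm{ilg}\,k\cdot K_0(k|z|)=1+O(\textrm{ilg}\,k)$ stays bounded; differentiating, $\nabla_z\bigl[\textrm{ilg}\,k\cdot K_0(k|z|)\bigr]=-k\,\textrm{ilg}\,k\,K_1(k|z|)\widehat z$ is $O\bigl((\textrm{ilg}\,k)|z|^{-1}\bigr)$ for $k|z|\lesssim1$ by $K_1(r)\simeq r^{-1}$, which together with $|\nabla\psi|=O(|z|^{-2})$ gives precisely the split $|\nabla u|\lesssim\bigl(|z|^{-2}+(\textrm{ilg}\,k)|z|^{-1}\bigr)e^{-ck|z|}$ on $E_-$, the exponential factor coming from the large-argument decay of $K_0,K_1$.

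Finally I would control the error. By construction $(\Delta+k^2)u-v$ is supported where the cut-offs have non-zero gradient (a compact transition region) together with the mismatch created by the $\textrm{ilg}\,k$-normalisation; each application of $(\Delta+k^2)$ to the normalised model solution costs a factor of $\textrm{ilg}\,k$, and since every constituent decays faster than any power of $|z|$ once one leaves $K$, the raw error is $O(\textrm{ilg}\,k\cdot|z|^{-\infty})$. Feeding this error back in as a new source and re-solving — legitimate because $\textrm{ilg}\,k$ is arbitrarily small once $k\le k_0$ — and iterating $q$ times replaces the error by $O((\textrm{ilg}\,k)^q|z|^{-\infty})$, as asserted. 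The main obstacle is the bookkeeping around the dimension-two end: showing $\psi$ genuinely has a bounded limit on $E_-$, pinning down that the $\textrm{ilg}\,k$-normalised $K_0$ term produces exactly the gradient split $|z|^{-2}+(\textrm{ilg}\,k)|z|^{-1}$ and nothing worse, and tracking the powers of $\textrm{ilg}\,k$ through the iteration uniformly in $k\le k_0$; the $E_+$-end and near-$K$ estimates are essentially identical to the construction of Lemma~\ref{leKey} and of \cite{HS,CCH}.
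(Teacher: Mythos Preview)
The paper does not prove this lemma at all; it is quoted as \cite[Lemma~2.14]{HNS} and used as a black box in Section~6, so there is no in-paper argument to compare your attempt against.

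That said, your sketch is a faithful outline of the HNS construction and identifies the genuinely new feature correctly: the two-dimensional end forces the $\textrm{ilg}\,k$ normalisation of the model resolvent $K_0(k|z|)$, and the gradient split $|z|^{-2}+(\textrm{ilg}\,k)|z|^{-1}$ on $E_-$ is correctly attributed to $\nabla\psi$ plus the derivative of the normalised Bessel term. Two places where your write-up is loose and would need tightening in a full proof: first, the assertion $\psi=a_-+O(|z|^{-1})$ on $E_-$ needs the observation that the full-manifold Green's function stays bounded along the planar end because non-parabolicity is supplied by $E_+$ (this is exactly where $n_+\ge3$ enters); second, your formula $\textrm{ilg}\,k\cdot K_0(k|z|)=1+O(\textrm{ilg}\,k)$ is not literally correct for $k|z|\lesssim1$ --- the remainder is $O(\textrm{ilg}\,k\cdot\log|z|)$, which is still $O(1)$ on the relevant region $|z|\lesssim k^{-1}$ but requires that extra sentence. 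The iteration gaining a factor of $\textrm{ilg}\,k$ per step, with error supported where the cut-offs transition, is the right mechanism for the $O((\textrm{ilg}\,k)^q|z|^{-\infty})$ conclusion.
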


With the help of this lemma, one can again decompose the resolvent operator 
\begin{align*}
    (\Delta+k^2)^{-1} = \sum_{i=1}^4 G_i(k),\quad \forall 0<k\le k_0.
\end{align*}
We have similar properties as in the case $n_*\ge 3$. For detailed discussion about parametrix, we refer readers to \cite[Section~2,3]{HNS}.

$\bullet$ For $i=2,4$, and $h\in C_c^\infty(\mathcal{M})$,
\begin{align}\label{eq_critical_G24}
    \left\| \int_0^{k_0} \nabla G_i(k) h dk \right\|_q \lesssim \|h\|_q,\quad \forall 1<q<\infty.
\end{align}

$\bullet$ For $i=1,3$, denote by $\Delta_{+}, \Delta_-$ the Laplace-Beltrami operator on $\mathbb{R}^{n_+}\times \mathcal{M}_+$ and $\mathbb{R}^2\times \mathcal{M}_-$ respectively. Define $\phi_{\pm}\in \mathcal{C}^\infty(\mathcal{M})$ such that
\begin{align*}
    &(1) \textrm{supp}(\phi_{\pm}) \subset E_{\pm}, \quad (2) 0\le \phi_{\pm} \le 1,\\
    &(3) \phi_{\pm} = 1\quad \textrm{outside some compact set $\Tilde{K}_{\pm}$},
\end{align*}
where $K_{\pm}\subset \subset \Tilde{K}_{\pm}$. Then, one can write down explicit formulas ($z_{\pm}^0 \in K_{\pm}$ are two fixed points playing the role of origin):
\begin{align}\label{eq_critical_G1}
G_1(k)(z,z') &= \sum_{\pm} (\Delta_{\pm} +k^2)^{-1}(z,z') \phi_{\pm}(z)\phi_{\pm}(z'),\\ \label{eq_critical_G3}
G_3(k)(z,z') &= \sum_{\pm} (\Delta_{\pm} +k^2)^{-1}(z_{\pm}^0,z') u_{\pm}(z,k)\phi_{\pm}(z'),
\end{align}
where $u_{\pm}$ are defined by Lemma~\ref{lemma_key_critical_dimension} such that $(\Delta+k^2)u_{\pm} = v_{\pm}$ and $v_{\pm} = -\Delta \phi_{\pm}\in \mathcal{C}_c^\infty(\mathcal{M})$.

$\bullet$ One has estimates: for $k>0$
\begin{align}\label{eq_critical_resol}
    &(\Delta_- + k^2)^{-1}(z,z') \lesssim \left[d(z,z')^{2-N} + 1 + |\log{kd(z,z')}|\right] \textrm{exp}\left(-ck d(z,z')\right),\\
    &\left|\nabla (\Delta_-+k^2)^{-1}(z,z')\right| \lesssim \left[d(z,z')^{1-N} + d(z,z')^{-1}\right] \textrm{exp}\left(-ck d(z,z')\right),\\ \label{eq_critical_resol3}
    &(\Delta_+ + k^2)^{-1}(z,z') \lesssim \left[d(z,z')^{2-N} + d(z,z')^{2-n_+}\right] \textrm{exp}\left(-ck d(z,z')\right),\\ \label{eq_critical_resol2}
    &\left|\nabla (\Delta_+ +k^2)^{-1}(z,z')\right| \lesssim \left[d(z,z')^{1-N} + d(z,z')^{1-n_+}\right] \textrm{exp}\left(-ck d(z,z')\right).
\end{align}

We need following lemma. Compare to \cite[Proposition~2.1]{DR}. We mention that in the rest of the paper, we may write $dz = d\mu(z)$ for simplicity.

\begin{lemma}\label{thm_Hardy_M}
Let $\mathcal{M}$ be a manifold with ends defined by \eqref{eq_Manifold} but with $l\ge 2$ and $n_*\ge 2$. Then, the following Hardy inequality holds
\begin{align*}
    \int_{\mathcal{M}} \left|\frac{f(z)}{|z|}\right|^p dz \lesssim \int_{\mathcal{M}} |\nabla f|^p dz,\quad \forall 1< p<n_*
\end{align*}
for all $f\in \mathcal{C}_c^\infty(\mathcal{M})$.
\end{lemma}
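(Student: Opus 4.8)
The plan is to split the inequality into a contribution from the compact core $K$ and contributions from the ends, to reduce the end contributions to the classical Euclidean Hardy inequality, and to absorb the residual $L^p$ mass near the core directly into $\||\nabla f|\|_p$. For each $i$ pick $\phi_i\in\mathcal{C}^\infty(\mathcal{M})$ supported in $E_i$ with $0\le\phi_i\le1$ and $\phi_i\equiv1$ outside a compact set (cf.\ the cut-offs of the parametrix construction), and fix a relatively compact open set $\Omega_0\supset K$ large enough that $\phi_i\equiv1$ on $E_i\setminus\Omega_0$ and the support of $\nabla\phi_i$ is contained in $\Omega_0$, for every $i$. Since $|z|\gtrsim1$ everywhere and $|z|\simeq1$ on $\Omega_0$,
\[
  \int_{\mathcal{M}}\frac{|f(z)|^p}{|z|^p}\,dz \;\lesssim\; \int_{\Omega_0}|f|^p\,dz \;+\; \sum_{i=1}^l\int_{E_i\setminus\Omega_0}\frac{|f(z)|^p}{|z|^p}\,dz .
\]

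For the end terms I use that $E_i$ is isometric to $(\mathbb{R}^{n_i}\times\mathcal{M}_i)\setminus K_i$ with the product metric. Writing points of $E_i$ as $(x,\omega)$ with $x\in\mathbb{R}^{n_i}$, $\omega\in\mathcal{M}_i$, a short computation with the distance function gives $d(z,a)=|x|+O(1)$ uniformly in $a\in K$, hence $|z|=\sup_{a\in K}d(z,a)\simeq1+|x|$ on $E_i$, and in particular $|z|^{-p}\lesssim|x|^{-p}$ there. On $E_i\setminus\Omega_0$ one has $f=\phi_i f$, and extending $\phi_i f$ by zero across $\partial K_i$ to a function in $\mathcal{C}_c^\infty(\mathbb{R}^{n_i}\times\mathcal{M}_i)$, the classical Hardy inequality in the $\mathbb{R}^{n_i}$ variable — valid since $1<p<n_*\le n_i$ — applied on almost every slice $\omega=\mathrm{const}$ and then integrated over the compact fibre $\mathcal{M}_i$ yields
\[
  \int_{E_i\setminus\Omega_0}\frac{|f|^p}{|z|^p}\,dz \;\lesssim\; \int_{\mathbb{R}^{n_i}\times\mathcal{M}_i}\frac{|\phi_i f|^p}{|x|^p} \;\lesssim\; \int_{\mathbb{R}^{n_i}\times\mathcal{M}_i}|\nabla(\phi_i f)|^p \;\lesssim\; \int_{\mathcal{M}}|\nabla f|^p + \int_{\Omega_0}|f|^p ,
\]
using $|\nabla_x(\phi_i f)|\le|\nabla(\phi_i f)|$ on the product and absorbing the term produced by $\nabla\phi_i$ (supported in $\Omega_0$) into the core integral. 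Combining with the previous display, the whole statement reduces to the \emph{core estimate}
\[
  \int_{\Omega_0}|f|^p\,dz \;\lesssim\; \int_{\mathcal{M}}|\nabla f|^p\,dz ,\qquad \forall f\in\mathcal{C}_c^\infty(\mathcal{M}).
\]

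This core estimate is the heart of the matter, and it is precisely where the non-doubling nature of $\mathcal{M}$ has to be confronted, since the doubling-plus-Poincaré route behind \cite[Proposition~2.1]{DR} is unavailable. It holds because $\mathcal{M}$ is $p$-non-parabolic in the given range: each end, being isometric outside a compact set to the product $\mathbb{R}^{n_i}\times\mathcal{M}_i$, is itself doubling, satisfies a Poincaré inequality, and has volume growth $\simeq r^{n_i}$ with $n_i\ge n_*>p$, so by the volume-growth criterion each end is $p$-non-parabolic; as a manifold possessing a non-parabolic end is non-parabolic, $\mathcal{M}$ is $p$-non-parabolic, and by the standard capacity characterisation of $p$-parabolicity this is equivalent to the existence of $C=C(\Omega_0,p)$ with $\|u\|_{L^p(\Omega_0)}\le C\||\nabla u|\|_{L^p(\mathcal{M})}$ for all $u\in\mathcal{C}_c^\infty(\mathcal{M})$ (classical for $p=2$, and the $p$-Laplacian version is standard in the literature on $p$-parabolicity). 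I expect this step — establishing the $p$-non-parabolicity of a manifold whose ends carry several distinct dimensions, or equivalently producing the core estimate by a direct capacity/compactness argument — to be the main obstacle; everything else is routine. The range $1<p<n_*$ is exactly what makes both ingredients run simultaneously, and the borderline case $n_*=2$ is included, since both the Euclidean Hardy inequality on $\mathbb{R}^2$ and the $p$-non-parabolicity of a two-dimensional end hold for $1<p<2$.
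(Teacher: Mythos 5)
Your proof is correct and follows essentially the same structure as the paper's: cut off onto each product end, reduce to a Hardy inequality on $\mathbb{R}^{n_i}\times\mathcal{M}_i$, and then close the argument by absorbing the residual $L^p$ mass on the core via $p$-hyperbolicity (non-parabolicity). The two places where you diverge are both minor and both valid. First, for the end Hardy inequality you invoke the classical one-variable Euclidean inequality in the $\mathbb{R}^{n_i}$ factor applied slicewise over the compact fibre and use $|z|\simeq 1+|x|$; the paper instead appeals to the general Hardy theorem of Dung–Russ (their Theorem~2.3) by verifying doubling, Poincar\'e and reverse doubling with exponent $n_i$ on each $\mathbb{R}^{n_i}\times\mathcal{M}_i$. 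Your slicewise route is more elementary and self-contained. Second, for the crucial core estimate $\|f\|_{L^p(\Omega_0)}\lesssim\||\nabla f|\|_{L^p(\mathcal{M})}$, you deduce $p$-non-parabolicity of each end from a volume-growth criterion (using doubling and Poincar\'e on the product) and then use the fact that a manifold with a $p$-non-parabolic end is $p$-non-parabolic; the paper observes that the end Hardy inequality it has just established already implies $p$-hyperbolicity of each $\mathbb{R}^{n_i}\times\mathcal{M}_i$, and then cites a perturbation/gluing result (Devyver) to pass from the ends to $\mathcal{M}$. These are the same mechanism packaged differently: both reduce to ``the ends are $p$-hyperbolic, hence so is $\mathcal{M}$,'' which you correctly identify as the non-routine step. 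Your version actually pulls in a little more black-box machinery here (the volume-growth criterion), while the paper reuses the Hardy inequality; but each is a legitimate way to get end $p$-hyperbolicity, and the gluing step you assert is a standard fact in the $p$-parabolicity literature, matching what the paper cites.
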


\begin{proof}[Proof of Lemma~\ref{thm_Hardy_M}]
We follow the idea from \cite[Proposition~2.1, Theorem~2.3]{DR}. Let $\{\varphi_i\}_{1\le i\le l}$ be a sequence of smooth functions such that $\textrm{supp}(\varphi_i)\subset E_i$ and $\varphi_i=1$ outside some compact set  containing $K_i\subset \mathbb{R}^{n_i}\times \mathcal{M}_i$. Decompose 
\begin{equation*}
    f = \sum_{1\le i\le l} f\varphi_i + f\left(1-\sum_{1\le i\le l}\varphi_i \right) := \sum_{1\le i\le l}f_i + f_{0}.
\end{equation*}
It is clear to see that for each $1\le i\le l$, $f_i\in \mathcal{C}_c^\infty(\mathbb{R}^{n_i}\times \mathcal{M}_i)$ (by zero extension) and $f_0\in \mathcal{C}_c^\infty(\Tilde{K})$, where $\Tilde{K}$ is a small neighbourhood of the compact connection set $K$. It is easy to check that for each $1\le i\le l$, all the assumptions in \cite[Theorem~2.3]{DR} hold on $\mathbb{R}^{n_i}\times \mathcal{M}_i$ (for the case $n_*\ge 3$, see \cite{HS} and see \cite{HNS} for $n_*=2$). Particularly, for each $i$, one has reverse doubling property with exponent $n_i$, i.e. for all $z\in \mathbb{R}^{n_i}\times \mathcal{M}_i$ and $R\ge r>0$,
\begin{align*}
    \frac{V(z,R)}{V(z,r)} \gtrsim \left(\frac{R}{r}\right)^{n_i},
\end{align*}
where $V(z,r) = \mu(B(z,r))$ denotes the volume of the ball. Therefore, \cite[Theorem~2.3]{DR} tells us (recall that $|z|\gtrsim 1$ is away from zero)
\begin{align}\label{eq_hardy}
    \int_{\mathbb{R}^{n_i}\times \mathcal{M}_i} \left| \frac{f_i(z)}{|z|} \right|^p dz \lesssim \int_{\mathbb{R}^{n_i}\times \mathcal{M}_i} |\nabla f_i(z)|^p dz \lesssim \||\nabla f|\|_p^p  + \int_{\Tilde{K}}|f(z)|^p dz,\quad \forall 1< p<n_i.
\end{align}
Combine this with the trivial estimate
\begin{align*}
    \int_{\mathcal{M}} \left| \frac{f_0(z)}{|z|} \right|^p dz \lesssim \int_{\Tilde{K}} |f(z)|^p dz,\quad \forall 1< p<n_*.
\end{align*}
One only needs to establish
\begin{align*}
    \|f\|_{L^p(\Tilde{K})} \le C_{\Tilde{K}} \||\nabla f|\|_p,\quad 1< p<n_*.
\end{align*}
Note that for $1<p<n_*$, this is a direct consequence of $p\textit{-}$hyperbolicity, see for example \cite{Marc}. Observe that on each end, the inequality \eqref{eq_hardy} itself implies $\mathbb{R}^{n_i}\times \mathcal{M}_i$ is $p\textit{-}$hyperbolic for all $1<p<n_i$. Hence, it follows by \cite[Corollary~2.1]{Devyver_perturbation} that $\mathcal{M}$ is $p\textit{-}$hyperbolic for all $1<p<\max_i n_i$. The proof is then complete.
\end{proof}

Next, we prove Theorem~\ref{thm_RR_critical}. Let us recall the statement here.

$\bullet$ \textbf{Statement of Theorem~\ref{thm_RR_critical}:} Let $\mathcal{M}$ be a manifold with ends defined by \eqref{eq_manifold_criticalcase}. Then \eqref{eq_RRp} holds for all $1<p<\infty$.

\begin{proof}[Proof of Theorem~\ref{thm_RR_critical}]
The first part of the proof is similar to Section~\ref{Section_proof}. By \cite[Theorem~1.2]{HNS} and duality, one only needs to confirm \eqref{eq_RRp} for $1<p<2$. Let $f,g\in \mathcal{C}_c^\infty(\mathcal{M})$. By the self-adjointness and non-negativity of $\Delta$, and the $L^p$ boundedness of $R_H$ (\cite[Proposition~4.5]{HNS}) and \eqref{eq_critical_G24}, one infers that
\begin{align*}
    \langle \Delta^{1/2}f, g\rangle &= \left\langle \nabla f, \int_0^\infty \nabla (\Delta+k^2)^{-1}g dk \right\rangle\\
    &= \langle \nabla f, R_H g\rangle + \left\langle \nabla f,  \sum_{i=1}^4 \int_0^{k_0} \nabla G_i(k)g dk \right\rangle\\
    &\lesssim \||\nabla f|\|_p \|g\|_{p'} + \left| \left\langle \nabla f,  \sum_{i=1,3} \int_0^{k_0} \nabla G_i(k)g dk \right\rangle \right|.
\end{align*}
Therefore, it suffices to treat the second term. For the term involving $G_1$, it has been showed in \cite[Proposition~4.4]{HNS} that part of this term, i.e. operator with kernel (use formulas \eqref{eq_critical_G1} and \eqref{eq_critical_resol}-\eqref{eq_critical_resol2})
\begin{align*}
    \phi_{\pm}(z) \int_0^{k_0} \nabla (\Delta_{\pm}+k^2)^{-1}(z,z') dk \phi_{\pm}(z'),
\end{align*}
acts as a bounded operator on $L^q$ for all $1<q<\infty$.

Therefore, by the explicit formulas \eqref{eq_critical_G1}, \eqref{eq_critical_G3}, we are left to consider the following bilinear form
\begin{align*}
    \mathcal{I}(f,g):= \sum_{\pm} \int_{\mathcal{M}} \nabla f(z) \cdot &\Bigg(\int_{\mathcal{M}} \int_0^{k_0} \nabla \phi_{\pm}(z) (\Delta_{\pm}+k^2)^{-1}(z,z')\phi_{\pm}(z') \\
    &+ \nabla u_{\pm}(z,k) (\Delta_{\pm}+k^2)^{-1}(z_{\pm}^0,z')\phi_{\pm}(z') g(z') dk dz'\Bigg) dz,
\end{align*}
and we wish to establish $|\mathcal{I}(f,g)|\le C \|\nabla f\|_p \|g\|_{p'}$ for $1<p<2$.

Similar to Section~\ref{Section_proof}, one can split
\begin{align*}
    (\Delta_{\pm}+k^2)^{-1}(z,z') = \left[(\Delta_{\pm}+k^2)^{-1}(z,z') - (\Delta_{\pm}+k^2)^{-1}(z_{\pm}^0,z')\right] + (\Delta_{\pm}+k^2)^{-1}(z_{\pm}^0,z').
\end{align*}
It follows by a similar argument in Lemma~\ref{lemma_difference_Lq}, also see \cite{HNS}, that the difference term acts as a bounded operator on $L^q$ $(1<q<\infty)$, i.e.
\begin{align*}
    \left\| h \mapsto \int_{\mathcal{M}} \nabla \phi_{\pm}(\cdot) \int_0^{k_0} \left[(\Delta_{\pm}+k^2)^{-1}(z,z') - (\Delta_{\pm}+k^2)^{-1}(z_{\pm}^0,z')\right] dk \phi_{\pm}(z') h(z') dz'   \right\|_q \lesssim \|h\|_q
\end{align*}
for all $h\in \mathcal{C}_c^\infty(\mathcal{M})$.

Consequently, one only needs to treat 
\begin{align*}
    \mathcal{J}_{\pm}(f,g):= \int_{\mathcal{M}} \nabla f(z) \cdot \int_{\mathcal{M}} \int_0^{k_0} \left[\nabla \phi_{\pm}(z) + \nabla u_{\pm}(z,k) \right] (\Delta_{\pm}+k^2)^{-1}(z_{\pm}^0,z')\phi_{\pm}(z')g(z') dk dz' dz.
\end{align*}

Apply integration by parts and use the fact that $\Delta u_{\pm} = v_{\pm} - k^2 u_{\pm} = -\Delta \phi_{\pm} - k^2 u_{\pm}$. One obtains
\begin{align*}
    \mathcal{J}_{\pm}(f,g) &= \int_{\mathcal{M}} f(z) \int_{\mathcal{M}} \int_0^{k_0} \left[-k^2 u_{\pm}(z,k) \right] (\Delta_{\pm}+k^2)^{-1}(z_{\pm}^0,z')\phi_{\pm}(z')g(z') dk dz' dz\\
    &= \int_{\mathcal{M}} \frac{f(z)}{|z|} \int_{\mathcal{M}} \int_0^{k_0} |z| \left[-k^2 u_{\pm}(z,k) \right] (\Delta_{\pm}+k^2)^{-1}(z_{\pm}^0,z')\phi_{\pm}(z')g(z') dk dz' dz.
\end{align*}
Define operator
\begin{align*}
    \mathcal{T}_{\pm}: h \mapsto |\cdot| \int_{\mathcal{M}} \int_0^{k_0} k^2 u_{\pm}(\cdot,k) (\Delta_{\pm}+k^2)^{-1}(z_{\pm}^0,z')\phi_{\pm}(z')h(z') dk dz'.
\end{align*}

\begin{lemma}\label{lemma_critical}
Under the assumptions of Theorem~\ref{thm_RR_critical}, $\mathcal{T}_{\pm}$ are bounded on $L^{q'}$ for all $1<q<\infty$.
\end{lemma}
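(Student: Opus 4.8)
The plan is to run the same pointwise/weak-type analysis as for the operator $\mathcal{T}$ in Section~\ref{Section_RR_Broken_Line}, the only new feature being that the $-$ end has large-scale dimension $2$, which is exactly critical for $L^{q'}$ integrability. Fix $q\in(1,\infty)$. Since $\phi_{\pm}$ is supported in $E_{\pm}$ and $\mathcal{T}_{\pm}$ factors through the scalar $\int(\Delta_{\pm}+k^2)^{-1}(z_{\pm}^0,z')\phi_{\pm}(z')h(z')\,dz'$, Hölder's inequality gives
\[
\Big|\int_{\mathcal{M}}(\Delta_{\pm}+k^2)^{-1}(z_{\pm}^0,z')\phi_{\pm}(z')h(z')\,dz'\Big|\le\|h\|_{q'}\Big(\int_{E_{\pm}}\big|(\Delta_{\pm}+k^2)^{-1}(z_{\pm}^0,z')\phi_{\pm}(z')\big|^{q}\,dz'\Big)^{1/q}=:\|h\|_{q'}\,F_{\pm}(k).
\]
I would first estimate $F_{\pm}(k)$ from \eqref{eq_critical_resol} and \eqref{eq_critical_resol3}: on $E_{\pm}$ one has $d(z_{\pm}^0,z')\gtrsim 1$, so $d^{2-N}\lesssim d^{2-n_{\pm}}$, and on the $-$ end the logarithmic prefactor is harmless because, after the substitution $s=k\,d(z_-^0,z')$, it contributes $\int_{k}^{\infty}|\log s|^{q}e^{-cqs}s\,ds$, which is bounded uniformly in $k$ (the map $s\mapsto|\log s|^{q}s$ is integrable near $0$). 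This yields $F_{-}(k)\lesssim k^{-2/q}$ and, for the $+$ end, $F_{+}(k)\lesssim k^{\,n_{+}/q'-2}$ when $q'>n_{+}/2$ and $F_{+}(k)\lesssim k^{-\varepsilon}$ for any $\varepsilon>0$ otherwise; in every case $k^{2}F_{\pm}(k)$ is integrable near $0$.

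Next I would insert the pointwise bounds \eqref{key_critical_dimension} for $|u_{\pm}(z,k)|$ and split $z\in K$, $z\in E_{+}$, $z\in E_{-}$. On the ends the $u$-bound has the form $|u_{\pm}(z,k)|\lesssim|z|^{-m}e^{-ck|z|}$ with $m=n_{+}-2$ on $E_{+}$ and $m=0$ on $E_{-}$, so, writing $k^{2}F_{\pm}(k)=k^{\alpha}$ ($\alpha>-1$ in every case),
\[
|z|\int_{0}^{k_0}k^{2}|u_{\pm}(z,k)|F_{\pm}(k)\,dk\lesssim|z|^{1-m}\int_{0}^{\infty}k^{\alpha}e^{-ck|z|}\,dk\lesssim|z|^{-m-\alpha},
\]
while the contribution of $z\in K$ is $\lesssim 1$. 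Collecting exponents, for $\|h\|_{q'}\le1$ this gives $|\mathcal{T}_{\pm}h(z)|\lesssim1$ on $K$; a power $|z|^{-\gamma}$ on $E_{+}$ with $\gamma q'>n_{+}$ (using $n_{+}\ge3$, $q'>1$, and choosing $\varepsilon$ small in the threshold case $q'=n_{+}/2$); $|\mathcal{T}_{+}h(z)|\lesssim|z|^{-n_{+}/q'}$ (or $|z|^{-2+\varepsilon}$) on $E_{-}$; and $|\mathcal{T}_{-}h(z)|\lesssim|z|^{-2/q'}$ on $E_{-}$. Since $\mu(K)<\infty$ and $\mu(\{z\in E_{+}:|z|<\rho\})\simeq\rho^{n_{+}}$, every one of these bounds except the last is in $L^{q'}$, so $\mathcal{T}_{+}$ is already bounded on $L^{q'}$ and $\mathcal{T}_{-}$ is bounded on $L^{q'}(K\cup E_{+})$.

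The remaining piece, $\mathcal{T}_{-}$ restricted to $E_{-}$, is the main obstacle: because $\mu(\{z\in E_{-}:|z|<\rho\})\simeq\rho^{2}$, the bound $|\mathcal{T}_{-}h(z)|\lesssim\|h\|_{q'}|z|^{-2/q'}$ yields only a logarithmically divergent $L^{q'}$ integral, so a direct strong estimate fails. I would handle this exactly as the case $1<d\le 2$ in Theorem~\ref{thm_d>2_refine}: from the pointwise bound and $\mu(\{z\in E_{-}:|z|<\rho\})\simeq\rho^{2}$ one reads off $\mu\big(\{z\in E_{-}:|\mathcal{T}_{-}h(z)|>\delta\}\big)\lesssim(\|h\|_{q'}/\delta)^{q'}$, and combining with the strong bound on $K\cup E_{+}$ (via Chebyshev) shows $\mathcal{T}_{-}:L^{r'}\to L^{r',\infty}(\mathcal{M})$ for every $r'\in(1,\infty)$. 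Real (Marcinkiewicz) interpolation between two exponents $q_{0}'<q'<q_{1}'$ then upgrades this to the strong bound $\mathcal{T}_{-}:L^{q'}\to L^{q'}$, which together with the direct bound for $\mathcal{T}_{+}$ finishes the lemma for all $q\in(1,\infty)$. Apart from this borderline term, the only points that need care are the absorption of the two-dimensional-resolvent logarithm noted above and the threshold exponent $q'=n_{+}/2$, both of which are routine.
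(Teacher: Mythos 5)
Your proposal is correct and follows essentially the same route as the paper's proof: Hölder to reduce the inner integral to the scalar $F_\pm(k)$, the pointwise bounds for $u_\pm$ from Lemma~\ref{lemma_key_critical_dimension}, direct $L^{q'}$ strong-type bounds on $K$, $E_+$ and for all of $\mathcal{T}_+$, and a weak-type $(q',q')$ estimate for $\chi_{E_-}\mathcal{T}_-$ followed by Marcinkiewicz interpolation. The only cosmetic differences are your use of $F_+(k)\lesssim k^{-\varepsilon}$ in place of the paper's $k^{-1/q}$ in the range $q'\le n_+/2$, and a slightly more explicit account of which two exponents you interpolate between.
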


\begin{proof}[Proof of Lemma~\ref{lemma_critical}]
By a straightforward estimate \eqref{eq_critical_resol},
\begin{align*}
    |\mathcal{T}_{-}h(z)| &\lesssim |z| \int_0^{k_0} k^2 |u_{-}(z,k)| \int_{E_{-}} \left(1 + |\log{(k |z'|)}|\right) e^{-ck|z'|} |h(z')| dz'dk\\
    &\lesssim |z| \|h\|_{q'} \int_0^{k_0} k^2 |u_{-}(z,k)| \left(\int_{E_{-}} \left(1 + |\log{(k |z'|)}|\right)^q e^{-c' k |z'|}dz' \right)^{1/q}dk\\
    &\lesssim |z| \|h\|_{q'} \int_0^{k_0} k^2 |u_{-}(z,k)| \left(\int_1^\infty \left(1 + |\log{(k r)}|\right)^q e^{-c' k r} r dr \right)^{1/q}dk\\
    &\lesssim |z| \|h\|_{q'} \int_0^{k_0} k^{2/q'} |u_{-}(z,k)| dk.
\end{align*}
By Lemma~\ref{lemma_key_critical_dimension}, one has estimates
\begin{align}\label{eq_u_+}
    |u_{\pm}(z,k)| \lesssim \begin{cases}
    1, & z\in K,\\
    |z|^{2-n_+} e^{- ck |z|}, & z \in  E_+,\\
    e^{-ck |z|}, & z \in E_-.
\end{cases}
\end{align}
Therefore,
\begin{align*}
    |\mathcal{T}_{-}h(z)| \lesssim \|h\|_{q'} \begin{cases}
        1, & z\in K,\\
        |z|^{2-n_+-2/q'}, & z\in E_+,\\
        |z|^{-2/q'}, & z\in E_-.
    \end{cases}
\end{align*}
Decompose $\mathcal{T}_- = \chi_K \mathcal{T}_-  + \chi_{E_+} \mathcal{T}_-  +  \chi_{E_-} \mathcal{T}_- :=  \mathcal{T}_{-}^1 + \mathcal{T}_{-}^2 + \mathcal{T}_{-}^3$. It is clear that by Hölder's inequality that $\|\mathcal{T}_{-}^1 + \mathcal{T}_{-}^2\|_{q'\to q'}\lesssim 1$ for all $1<q<\infty$ since $n_+ > 2$. While for $\mathcal{T}_{-}^3$, note that for $\lambda >0$,
\begin{align*}
    \mu \left(\left\{z\in E_-; |\mathcal{T}_-^3 h(z)|>\lambda \right\}\right) &\le \mu \left(\left\{z\in E_-; C |z|^{-2/q'} \|h\|_{q'}>\lambda \right\}\right)\\
    &= \mu \left(\left\{z\in E_-;  |z| < C \left(\frac{\|h\|_{q'}}{\lambda}\right)^{q'/2} \right\}\right)\\
    &\lesssim \lambda^{-q'} \|h\|_{q'}^{q'},
\end{align*}
i.e. $\mathcal{T}_-^3$ is of weak type $(q', q')$ for all $1<q<\infty$. It follows by interpolation, one concludes $\mathcal{T}_-$ is bounded on $L^{q'}$ for all $1<q<\infty$.

The case $\mathcal{T}_+$ is similar. Indeed, by using \eqref{eq_critical_resol3} instead , one comes up with 
\begin{align*}
    |\mathcal{T}_+h(z)|\lesssim |z| \|h\|_{q'} \begin{cases}
        \int_0^{k_0} |u_+(z,k)| k^{n_+/q'} dk, & 1<q<\frac{n_+}{n_+ -2},\\
        \int_0^{k_0} |u_+(z,k)| k^{1/q'+1} dk, & q\ge \frac{n_+}{n_+ -2}.
    \end{cases}
\end{align*}
In the virtue of \eqref{eq_u_+}, one deduces
\begin{align*}
    |\mathcal{T}_+h(z)|&\lesssim \|h\|_{q'} \Bigg(\chi_K(z) + \chi_{E_+}(z) \begin{cases}
        |z|^{2-n_+-n_+/q'}, & 1<q<\frac{n_+}{n_+-2},\\
        |z|^{1 -n_+ - 1/q'}, & q\ge \frac{n_+}{n_+-2}.
    \end{cases}\\
    &+ \chi_{E_-}(z) \begin{cases}
        |z|^{-n_+/q'}, & 1<q<\frac{n_+}{n_+-2},\\
        |z|^{-1/q' -1}, & q\ge \frac{n_+}{n_+-2}.\\
    \end{cases}\Bigg).
\end{align*}
One checks easily that $\|\mathcal{T}_+h\|_{q'} \lesssim \|h\|_{q'}$ for all $1<q<\infty$ and the proof is complete.
  
\end{proof}

Now, we may continue by Lemma~\ref{lemma_critical} that 
\begin{align*}
    |\mathcal{J}_{\pm}(f,g)|&\le \int_{\mathcal{M}} \frac{|f(z)|}{|z|} |\mathcal{T}_{\pm}g(z)| dz \\
    &\le \left\| \frac{f(\cdot)}{|\cdot|}\right\|_p \|\mathcal{T}_{\pm}g\|_{p'}\\
    &\lesssim \||\nabla f|\|_p \|g\|_{p'},\quad \forall 1<p<2,
\end{align*}
where the last inequality follows by $L^p\textit{-}$Hardy inequality, Lemma~\ref{thm_Hardy_M}. The proof of Theorem~\ref{thm_RR_critical} follows by ranging all $g\in \mathcal{C}_c^\infty(\mathcal{M})$ with $\|g\|_{p'}=1$.

\end{proof}

{\bf Acknowledgments.} 
This note is part of the author's PhD thesis. I would like to thank my supervisor Adam Sikora for introducing me to the topic, carefully reading the note and giving valuable advice. I also want to thank Professor Andrew Hassell for suggestions on the references and encouragement.


\bibliographystyle{abbrv}

\bibliography{references.bib}

\end{document}